\newtheorem{theorem}{Theorem}
\newtheorem{proposition}[theorem]{Proposition}
\newtheorem{lemma}[theorem]{Lemma}
\newtheorem{corollary}[theorem]{Corollary}
\newtheorem{conjecture}[theorem]{Conjecture}
\theoremstyle{remark}
\newtheorem{remark}[theorem]{Remark}
\theoremstyle{remark}
\newtheorem{definition}[theorem]{Definition}
\theoremstyle{remark}
\theoremstyle{remark}
\newtheorem{property}[theorem]{Property}
\newcommand{\graph}{\operatorname{graph}}
\newcommand{\norm}[1]{\left\|#1\right\|}
\newcommand{\RR}{\mathbb{R}}
\newcommand{\dist}{\operatorname{dist}}
\newcommand{\Gammabar}{\textrm{\sout{$\phantom{.}$}$\!\!\Gamma$}}
\newcommand{\diam}{\operatorname{diam}}
\newcommand{\WC}{\mathcal W}
\newcommand{\BC}{\mathcal B}
\newcommand{\WB}{(\mathcal W,\mathcal B)}
\newcommand{\itemizeEqnVSpacing}{\rule{0pt}{1pt}\vspace*{-12pt}}
\renewcommand{\dim}{\operatorname{dim}}
\newcommand{\bdry}{\operatorname{bdry}}
\newcommand{\dt}{(\delta,t)}
\begin{document}
\title[Estimates for a Variable Coefficient Wolff
Maximal Function]{$L^3$ Estimates for an Algebraic Variable
Coefficient Wolff Circular Maximal Function}
\author[J. Zahl]{Joshua Zahl}%
\address{Department of Mathematics, UCLA, 520 Portola Plaza Box 951555, Los Angeles CA 90095-1555, USA}
\email{jzahl@math.ucla.edu}
%\thanks{Research supported in part by the Department of Defense through the National Defense Science \& Engineering Graduate Fellowship (NDSEG) Program} %
\subjclass[2000]{42B25}%
\keywords{Wolff circular maximal function, Besicovitch-Rado-Kinney
set, vertical algebraic decomposition.}

\date{\today}
\begin{abstract}%
In 1997, Thomas Wolff proved sharp $L^3$ bounds for his circular maximal function, and in 1999, Kolasa and Wolff proved certain non-sharp $L^p$ inequalities for a broader class of maximal functions arising from curves of the form $\{\Phi(x,\cdot)=r\}$, where $\Phi(x,y)$ satisfied Sogge's cinematic curvature condition. Under the additional hypothesis that $\Phi$ is algebraic, we obtain a sharp $L^3$ bound on the corresponding maximal function. Since the function $\Phi(x,y)=|x-y|$ is algebraic and satisfies the cinematic curvature condition, our result generalizes Wolff's $L^3$ bound. The algebraicity condition allows us to employ the techniques of vertical cell decompositions and random sampling, which have been extensively developed in the computational geometry literature.
\end{abstract}
\maketitle%

\section{Introduction}
\subsection{Background} Consider the Wolff circular maximal
function
\begin{equation}\label{CircularMaximalFunctionDefn}
M^\delta f(r) =
\sup_{x}\frac{1}{|C^\delta(x,r)|}\int_{C^{\delta}(x,r)}|f|,
\end{equation}
where $C^{\delta}(x,r)$ is the $\delta$--neighborhood of the
circle centered at $x$ of radius $r$. In \cite{Wolff1}, Wolff
proved that for each $\epsilon>0$ there exists a constant
$C_\epsilon$ such that

\begin{equation}\label{L3Bound}
\norm{M^\delta f}_{L^3([1/2,1])}\leq
C_\epsilon\delta^{-\epsilon}\norm{f}_{L^3(\RR^2)},
\end{equation}
which in particular implies that every BRK set (a planar set
containing a circle of each radius $r\in[1/2,1]$) must have
Hausdorff dimension 2. It is not possible to omit the
$\delta^{-\epsilon}$ factor since if \eqref{L3Bound} held with
this factor omitted, it would imply that every BRK set had
strictly positive Lebesgue measure, and this is known to be false.
Wolff's result built off of his earlier work\footnote{While
\cite{Wolff3} was published after \cite{Wolff1}, \cite{Wolff3} was
written first.} (jointly with Kolasa) in \cite{Wolff3}, where he
proved the bound
\begin{equation}\label{weakerLpBound}
\norm{M^\delta f}_q\leq
C_{p,q}\delta^{-\frac{1}{2}(\frac{3}{p}-1)}\norm{f}_p,\ \ \
p<\frac{8}{3},\ q\leq 2p^\prime.
\end{equation}
Equation \eqref{weakerLpBound} can almost be obtained by
interpolating \eqref{L3Bound} with the trivial bound
\begin{equation}
\norm{M^\delta f}_\infty\leq C\delta^{-1}\norm{f}_1,
\end{equation}
though in doing so we pick up an additional
$C_\epsilon\delta^{-\epsilon}$ factor.

However, this earlier Kolasa-Wolff result applied not only to
circles but to any family of curves satisfying Sogge's cinematic
curvature condition first introduced in \cite{Sogge}; let $U$ be a
neighborhood of $(a,b)\in\RR^2\times\RR^2$ and $\Phi\colon U\to
\RR$ with $\Phi$ smooth. Then the family of curves\footnote{Note
that we are reversing the role of $x$ and $y$ from the notation of
\cite{Wolff3}.} $\Gammabar(x,r) = \{y\colon \Phi(x,y)=r\}$ is said
to satisfy the \emph{cinematic curvature} condition provided
\begin{itemize}%
\item
\itemizeEqnVSpacing%
\begin{equation}\label{cinematicCurvatureGradientCondition}
\nabla_y\Phi(a,b)\neq 0.
\end{equation}
$\phantom{.}$
\item%
\itemizeEqnVSpacing%
\begin{equation}\label{cinematicCurvatureCondition}%
 \det\Big(\nabla_x\left[\begin{array}{c}
e\cdot \nabla_y\Phi(x,y)\\
e\cdot \nabla_y\big(\frac{e\cdot
\nabla_y\Phi(x,y)}{|\nabla_y\Phi(x,y)|}\big)
\end{array}\right]\Big|_{(x,y)=(a,b)}\Big)\neq 0,
\end{equation}
where $e$ is a unit vector orthogonal to $\nabla_y\Phi(a,b)$.
While there are two potential choices of vector $e$, the two
choices only differ by a sign, so the veracity of
\eqref{cinematicCurvatureCondition} is independent of the choice
made.
\end{itemize}
Informally, the second condition is a quantitative version of the
statement that two distinct curves cannot be tangent to second
order---it guarantees that if two curves $\Gammabar$ and $\tilde
\Gammabar$ intersect at a point $x$, then their normal vectors at
$x$ or their curvature at $x$ (or both) must differ by at least
the distance between $\Gammabar$ and $\tilde\Gammabar$ in some
suitable metric.

 Let $\Gammabar^\delta(x,r)$ be the
$\delta$--neighborhood of $\Gammabar$. Define
\begin{equation}\label{CurveMaximalFunctionDefn}
M_\Phi^\delta f(r) = \sup_{x\in
U_1}\frac{1}{|\Gammabar^\delta(x,r)|}\int_{\Gammabar^{\delta}(x,r)}|f|,
\end{equation}
where $U_1$ is a sufficiently small neighborhood of $a$. Then
Kolasa and Wolff proved that for any $f$ supported in a
sufficiently small neighborhood of $b,$
\begin{equation}\label{MweakerLpBound}
\norm{M_\Phi^\delta f}_{L^q([1/2,1])}\leq
C_{p,q}\delta^{-\frac{1}{2}(\frac{3}{p}-1)}\norm{f}_p,\ \ \
p<\frac{8}{3},\ q\leq 2p^\prime.
\end{equation}
\subsection{New Results}
\begin{theorem}\label{theoremOne}
Let $\Phi$ be an algebraic function satisfying the cinematic
curvature conditions \eqref{cinematicCurvatureGradientCondition}
and \eqref{cinematicCurvatureCondition} at $(a,b)$ and let $U_1$
be a sufficiently small neighborhood of $a$. Then for all $f$
supported in a sufficiently small neighborhood of $b$ and for all
$\epsilon>0$, there exist a constant $C_\epsilon$ depending only
on $\epsilon$ and $\Phi$ such that for all $\delta>0$,
\begin{equation}\label{ML3Bound}
\norm{M_\Phi^\delta f}_{L^3([1/2,1])}\leq
C_\epsilon\delta^{-\epsilon}\norm{f}_{L^3(\RR^2)}.
\end{equation}
\end{theorem}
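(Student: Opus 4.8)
The plan is to carry out the overall strategy of Wolff's proof of \eqref{L3Bound} and of the Kolasa--Wolff proof of \eqref{MweakerLpBound}, replacing the combinatorial core of Wolff's argument---his bound on the number of near-tangencies among $\delta$-separated circles---by an analogous bound for the curves $\Gammabar(x,r)$ derived from the theory of vertical algebraic decompositions together with Clarkson--Shor random sampling. Since \eqref{cinematicCurvatureGradientCondition} is an open condition, after shrinking $U_1$ and the support of $f$ we may assume $\nabla_y\Phi\neq 0$ on the relevant set, so that every $\Gammabar(x,r)$ that occurs is a smooth curve, and likewise we may assume a uniform quantitative form of \eqref{cinematicCurvatureCondition}. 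Algebraicity of $\Phi$ enters only through the observation that all the relevant relations---$y$ lying on $\Gammabar(x,r)$, and $\Gammabar(x,r)$ being tangent to $\Gammabar(x',r')$---are semialgebraic of complexity bounded in terms of the degree of $\Phi$ alone, which both licenses the computational-geometry machinery and forces every constant to depend on $\epsilon$ and $\Phi$ only.

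\emph{Step 1: reduction to a tangency bound.} First I would linearize, choosing for each $r$ a center $x(r)$ realizing the supremum in \eqref{CurveMaximalFunctionDefn} up to a constant factor and using $|\Gammabar^\delta(x,r)|\sim\delta$ to obtain $M_\Phi^\delta f(r)\lesssim\delta^{-1}\int_{\Gammabar^\delta(x(r),r)}|f|$. By the standard dyadic pigeonholing and restricted-weak-type reductions (as in Wolff's treatment of the circular maximal function), together with a ``two ends'' argument to pass from $L^2$-type information to the full $L^3$ statement, \eqref{ML3Bound} follows once one proves a combinatorial estimate of the following shape: for every $\delta$-separated collection of parameters $(x_j,r_j)$, $j=1,\dots,N$, with $r_j\in[1/2,1]$ and $x_j\in U_1$, the number of pairs $(j,k)$ for which $\Gammabar^\delta(x_j,r_j)$ and $\Gammabar^\delta(x_k,r_k)$ are \emph{$\delta$-tangent}---overlapping in a lens of dimensions roughly $\delta\times\delta^{1/2}$---is at most $C_\epsilon\,\delta^{-\epsilon}\,N^{3/2}$; only the range $N\lesssim\delta^{-1}$, in which this is the relevant estimate, actually arises. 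This is the exact analogue, for the family $\{\Gammabar(x,r)\}$, of Wolff's main combinatorial lemma for circles. The quantitative form of \eqref{cinematicCurvatureCondition} is what guarantees that two of the $\delta$-neighborhoods overlap in a set of measure $\gtrsim\delta^{3/2}$ only when the underlying curves are $\delta$-tangent in this sense, and that no two distinct curves in the family are tangent to second order; hence $\delta$-tangent pairs are the only appreciable source of overlap, and so of the $L^3$ mass of $M_\Phi^\delta f$.

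\emph{Step 2: the incidence problem and random sampling.} For fixed $j$ the set $S_j\subset\RR^3$ of parameters $(x',r')$ for which $\Gammabar(x',r')$ is tangent to $\Gammabar(x_j,r_j)$ is, up to a harmless $\delta$-thickening, a bounded-degree hypersurface; the quantitative cinematic curvature hypothesis makes it non-degenerate and makes the bitangency locus $S_j\cap S_k$ ($j\neq k$) a bounded-degree algebraic curve rather than a surface. Counting $\delta$-tangent pairs is then a $\delta$-discretized point--hypersurface incidence problem for $N$ points and $N$ bounded-degree hypersurfaces in $\RR^3$, which I would attack by the Clarkson--Shor method: a random sample of $r$ of the hypersurfaces admits a vertical algebraic decomposition of $\RR^3$ into $O_\epsilon(r^{3+\epsilon})$ cells of bounded description complexity, each meeting, with high probability, only $O\big((N/r)\ld\big)$ of the remaining hypersurfaces; distributing the $N$ points among these cells, using the bounded degree of the bitangency curves to control cells met by many hypersurfaces, and iterating $O(\epsilon^{-1})$ times (or taking $r$ to be a small power of $\delta^{-1}$) yields the bound of Step 1, the logarithmic losses being absorbed into the $\delta^{-\epsilon}$. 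It is because the $\delta$-discretization must be carried through the recursion that the \emph{vertical} decomposition, rather than a purely algebraic partition, is convenient: general position is then needed only down to scale $\delta$.

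\emph{Main obstacle.} The random-sampling step is by now routine; the substance of the proof is in Step 1, in establishing the $\delta$-robust geometric consequences of the cinematic curvature condition that Step 2 takes as input. One must show, with all constants controlled by $\epsilon$ and the complexity of $\Phi$: that $\delta$-tangency of $\Gammabar(x,r)$ and $\Gammabar(x',r')$ really does confine $(x',r')$ to a thin neighborhood of a non-degenerate bounded-degree hypersurface that is ``transverse'' to the foliation of $\RR^3$ by $\delta$-balls; that the associated bitangency loci are genuine bounded-degree curves and not higher-dimensional; and that the strata of parameter space on which \eqref{cinematicCurvatureCondition} degenerates form a lower-dimensional semialgebraic set whose contribution is negligible. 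Put differently, one must verify that at scale $\delta$ the family $\{\Gammabar(x,r)\}$ mimics the family of circles closely enough that the heuristic ``three degrees of freedom, tangency a codimension-one two-fold condition''---which is what produces the exponent $3/2$---holds uniformly. This, together with the bookkeeping needed to run Wolff's $L^3$ argument on top of the resulting tangency bound, is where the bulk of the work lies.
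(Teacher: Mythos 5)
Your overall strategy---reduce to a combinatorial tangency estimate for the family $\{\Gammabar(x,r)\}$, realize tangency as incidence with a bounded-degree hypersurface $S_j\subset\RR^3$ in parameter space, and count incidences via a Chazelle--Edelsbrunner--Guibas--Sharir vertical algebraic decomposition combined with Clarkson--Shor random sampling---is exactly the paper's strategy, and you correctly identify that algebraicity of $\Phi$ enters solely through the bounded semialgebraic complexity of these tangency hypersurfaces. The cinematic-curvature inputs you flag as the ``main obstacle'' (that near-tangency confines $(x',r')$ to a $\delta$-neighborhood of a nondegenerate surface, that bitangency loci are curves, that overlap of measure $\gtrsim\delta^{3/2}$ forces tangency) are also where the paper spends effort, though most of those are lifted from Kolasa--Wolff \cite{Wolff3} (Properties \ref{straighteningOut}--\ref{intersectionSizeBounds}) and repackaged via the surfaces $S_{\Gamma,X}$ in Lemmas \ref{coneIsSemiAlgebraic}, \ref{lemmaDistCompDelta}, \ref{coneHasDimTwo}, \ref{CylinderCorollary}.

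The gap is in your Step 1 reduction, and it is substantive rather than cosmetic. You assert that \eqref{ML3Bound} follows from the bound ``number of $\delta$-tangent pairs $\lesssim_\epsilon \delta^{-\epsilon}N^{3/2}$,'' with the passage to $L^3$ described as dyadic pigeonholing, a two-ends argument, and ``bookkeeping.'' That pair count is not the right combinatorial input and does not suffice. What the $L^3$ argument consumes is the \emph{multiplicity-weighted, bipartite} estimate \eqref{schlagThmNumberIncidencesControl}: for a $t$-bipartite pair $\WB$ with $\#\WC=m$, $\#\BC=n$, the number of pairwise-incomparable $\dt$-rectangles incident to $\gtrsim\mu$ whites and $\gtrsim\nu$ blacks is $\lesssim_\epsilon (mn)^\epsilon\bigl((mn/\mu\nu)^{3/4}+m/\mu+n/\nu\bigr)$. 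The unweighted pair count is the $\mu=\nu=1$, $\WC=\BC$ case; it throws away precisely the multiplicity information that controls how much $L^3$ mass a single high-multiplicity lens can carry. The bridge from \eqref{schlagThmNumberIncidencesControl} to Lemma \ref{quantitativeMaximalFunctionBound} and thence to \eqref{ML3Bound} is Schlag's induction-on-scales theorem (Proposition \ref{schlagsThm}), which the paper imports essentially as a black box; calling it ``bookkeeping'' undersells a genuine and delicate argument, and your sketch never mentions it. Finally, the cell decomposition and random sampling are not applied once to give the count directly: Lemma \ref{biPartitePairRectControlLemma} is itself proved by an induction on $mn$ (Section \ref{sectionCountingIncidences}, closely following Wolff \cite{Wolff4}), with the decomposition from Lemmas \ref{constDescrComplexCellDecomp}, \ref{randomSamplingLemma}, \ref{cellDecompLemma3} furnishing the recursion step; your ``iterate $O(\epsilon^{-1})$ times'' gestures at a Clarkson--Shor-style recursion on $\delta$ rather than Wolff's induction on cardinality, and as written would not produce the bipartite $(\mu,\nu)$-rectangle bound that the maximal-function machinery actually requires.
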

\begin{remark}
See Appendix \ref{realAlgGeoAppendix} for the definition of an
algebraic function and related concepts.
\end{remark}
\begin{remark} Theorem \ref{theoremOne} generalizes
\eqref{L3Bound}. Indeed, $\Phi(x,y)=|x-y|$ is clearly algebraic,
and by the rotational, translational, and scale invariance of
$\Phi$, in order to verify the cinematic curvature condition it
suffices to verify the condition at the point $a=(0,0),\ b=(1,0)$.
Then $e=(0,1)$ and the determinant in
\eqref{cinematicCurvatureCondition} is 1. Furthermore, if
\begin{equation}
\Phi(x,y)=|x-y|+P(x,y)
\end{equation}
for $P$ a smooth algebraic function with $\norm{P}_{C^3}$
sufficiently small, then $\Phi$ satisfies
\eqref{cinematicCurvatureCondition} uniformly in the choice of
$a,b\in[0,1]^2.$ Thus we obtain \eqref{ML3Bound} for any family of
smooth algebraically perturbed circles, provided the perturbation
is not too large.
\end{remark}
We shall prove Theorem \ref{theoremOne} by modifying Schlag's
arguments in \cite{Schlag}. These arguments rely on a key
incidence lemma for circles, which is proved by Wolff in
\cite{Wolff4}. This incidence lemma employs various bounds on the
behavior of circle intersections, which do not obviously hold for
the more general class of curves we are considering. Luckily, most
of the analogous statements were proved by Kolasa and Wolff in
\cite{Wolff3}, so Theorem \ref{theoremOne} can largely be obtained
by patching together previously known results.

The constraint that $\Phi$ be algebraic is quite restrictive and
is likely not optimal (indeed it is reasonable to conjecture that
it is completely unnecessary). However, this constraint allows us
to use a ``semi-cylindrical algebraic decomposition'' argument
from real algebraic geometry. We shall discuss in Section
\ref{RiemannianMetricIntersectionSection} some conjectures about
how the algebraic requirements can be weakened.
\subsection{Proof Sketch}
Through standard reductions, it suffices to prove a discretized
version of a bound on the adjoint of the maximal operator
$M_\Phi^\delta$. Roughly speaking, if we have a collection of
``tubes'' $\{\Gamma^\delta\}$ corresponding to curves with
$\delta$--separated radii (see \eqref{gammaDefn} below for the
definition of $\Gamma$), we need to control the area of the region
where many of these tubes overlap. This is Lemma
\ref{quantitativeMaximalFunctionBound} below.

In \cite{Schlag}, Schlag showed that
\eqref{ML3Bound} holds for families of
curves satisfying two conditions. The first is a bound
(\eqref{schlagThmControlOfIntersectionSize} below) on
$|\Gamma^\delta\cap\tilde\Gamma^\delta|$ (where here $|\cdot|$
denotes Lebesgue measure) provided we have control over how close
$\Gamma$ and $\tilde\Gamma$ are to each other in a suitable
parameter space and how close the two curves are to being tangent.

The second requirement, which is made precise in
\eqref{schlagThmNumberIncidencesControl} below, controls the
number of almost-tangencies that can occur between the elements of
$\WC$ and $\BC$ if $\WB$ is a $t$--bipartite pair. Informally, two
collections of curves $\WC$ and $\BC$ are called a $t$--bipartite
pair if every two curves in $\WC$ (resp $\BC$) are close in an
appropriate parameter space while those in $\WC$ are far from
those in $\BC$ (there are some additional technical requirements
that we shall gloss over here. The full details can be found in
Definition \ref{defnOfAtBipartitePair}). The requirement is a
quantitative analog of the incidence geometry result that $N$
circles in $\RR^2$ can have at most $C_\epsilon N^{3/2+\epsilon}$
tangencies between pairs of circles. The incidence geometry result
was proved in \cite{Clarkson}, and in \cite{Wolff4}, Wolff
obtained the quantitative analog that was then used in Schlag's
argument.

The bulk of this paper will be devoted to showing that families of
curves arising from algebraic defining functions $\Phi$ satisfy
the second requirement, i.e. that
\eqref{schlagThmNumberIncidencesControl} is true. Once this has
been established, one can run Schlag's arguments virtually
verbatim to obtain Theorem \ref{theoremOne}.

\subsection{Thanks}
The author would like to thank Javier P\'erez for pointing out typos in an earlier version of this manuscript. The author was supported in part by the Department of Defense through the National Defense Science \& Engineering Graduate Fellowship (NDSEG) Program.

\section{Definitions and Initial
Reductions}\label{DefnAndInitialReductionSection} First, let us
assume $U=U_1\times U_2$ with $U_1,U_2$ sufficiently small disks
centered at $a$ and $b$ respectively (the requirement that $U_1$
and $U_2$ be disks will be relevant---we need $U_2$ to be a
semi-algebraic set). In particular, by selecting $U_1,U_2$
sufficiently small we can assume that the cinematic curvature
conditions hold for every point $(x,y)\in U_1\times U_2$ with
uniform bounds on $\nabla_y\Phi$ and with the determinant in
\eqref{cinematicCurvatureCondition} bounded uniformly away from 0.

Throughout this paper, $C,C^\prime$, etc.~will denote constants
that are allowed to vary from line to line. We will say $X\lesssim
Y$ or $X$ is $O(Y)$ if $X<CY$ and $X\sim Y$ if $X\lesssim Y$ and
$Y\lesssim X$.

Fix $0<\alpha<C^{-1}\diam(U_2).$ For $x\in U_1, r\in [1/2,1]$, we
define
\begin{equation}\label{gammaDefn}
\Gamma(x_0,r_0)=\{y\in B(b,\alpha)\colon\Phi(x_0,y)=r_0\}.
\end{equation}
We shall call these sets \emph{$\Phi$--circles}, and if $\Gamma$
is a $\Phi$--circle then $\Gamma^\delta$ will denote its
$\delta$--neighborhood. If $\Gamma,\tilde\Gamma,$ etc.~are
$\Phi$--circles, then unless otherwise noted, $x_0,r_0$ and
$\tilde x_0,\tilde r_0$ will refer to their respective centers and
radii. The $\Phi$--circles defined here are strict subsets of the
sets $\Gammabar$ defined in the introduction. However, if the
function $f$ is supported on a sufficiently small neighborhood of
$b$ then we can define a maximal function analogous to
\eqref{CurveMaximalFunctionDefn} with $\Gamma$ in place of
$\Gammabar$, and the two maximal functions will agree. Thus we
shall henceforth work with curves $\Gamma$ defined by
\eqref{gammaDefn}.

We shall restrict our attention to those $\Phi$--circles $\Gamma$
with $x_0\in U_1,\ r_0\in(1-\tau,1)$ for $\tau$ a sufficiently
small constant which depends only on $\Phi$. By standard
compactness arguments, we can recover $L^p([1/2,1])$ bounds on
$M_\Phi$ from those on the ``restricted'' version of $M_\Phi$ by
considering the supremum over a finite number of scaled versions
of the function.

Using standard reductions (see e.g.~\cite{Schlag}), in order to
prove Theorem \ref{theoremOne} it suffices to prove the following
estimate.
\begin{lemma}\label{quantitativeMaximalFunctionBound}
For $\eta>0$ and $\delta$ sufficiently small depending on $\eta$,
let $\mathcal A$ be a collection of $\Phi$--circles with
$\delta$--separated radii, with each radius lying in $(1-\tau,1)$.
Then there exists $\tilde{\mathcal A}\subset\mathcal A$ with
$\#\tilde{\mathcal A}\geq\frac{1}{C}\# \mathcal A$ such that for
all $\Gamma\in\tilde{\mathcal A}$ and $\delta<\lambda<1$,
\begin{equation}
\Big|B(b,C^{-1}\alpha)\cap\{y\in \Gamma^\delta\colon\
\sum_{\tilde\Gamma\in\mathcal A}\chi_{\tilde\Gamma^\delta}(y)
>\delta^{-\eta}\lambda^{-2}\}\Big|\leq \lambda|\Gamma^\delta|.
\end{equation}
\end{lemma}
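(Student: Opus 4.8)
The plan is to reduce Lemma~\ref{quantitativeMaximalFunctionBound} to the two structural inputs that Schlag's argument requires: the intersection estimate \eqref{schlagThmControlOfIntersectionSize} on $|\Gamma^\delta\cap\tilde\Gamma^\delta|$ and the incidence/almost-tangency bound \eqref{schlagThmNumberIncidencesControl} for $t$--bipartite pairs $\WB$. Since the statement of Lemma~\ref{quantitativeMaximalFunctionBound} is exactly the discretized dual estimate that Schlag deduces from these two hypotheses in \cite{Schlag}, the real content is verifying that our $\Phi$--circles satisfy both hypotheses; once that is done the deduction of the Lemma is, as the introduction says, ``virtually verbatim.'' I would therefore organize the proof in three parts.

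First, I would record the intersection estimate. The Kolasa--Wolff analysis in \cite{Wolff3} already gives, for $\Phi$ satisfying the cinematic curvature conditions uniformly on $U_1\times U_2$, a bound of the form $|\Gamma^\delta\cap\tilde\Gamma^\delta|\lesssim \delta^2/(\delta+\Delta)^{1/2}(\delta+\theta)$ (or its appropriate two-regime analog), where $\Delta$ measures the parameter-space separation of $\Gamma,\tilde\Gamma$ and $\theta$ the angle of almost-tangency. This is precisely \eqref{schlagThmControlOfIntersectionSize}, and it follows from the nondegeneracy of the Monge--Ampère type determinant in \eqref{cinematicCurvatureCondition} together with the implicit function theorem and a Taylor expansion of $\Phi$; no algebraicity is needed here. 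I would state this as a lemma citing \cite{Wolff3}, being careful about which neighborhoods $B(b,\alpha)$ and $B(b,C^{-1}\alpha)$ the curves and their intersections are localized to.

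Second, and this is where the bulk of the paper lives, I would establish the quantitative incidence bound \eqref{schlagThmNumberIncidencesControl}: for a $t$--bipartite pair $\WB$ of $\Phi$--circles the number of almost-tangencies is $\lesssim_\epsilon (\#\WC\cdot\#\BC)^{3/4+\epsilon}$ plus lower-order terms, the quantitative analog of the $N^{3/2+\epsilon}$ tangency bound for genuine circles from \cite{Clarkson}. For honest circles Wolff derived this in \cite{Wolff4} via a cell-decomposition/random-sampling argument; the curves here are algebraic, so I would replace the ad hoc circle geometry with a vertical (semi-cylindrical) algebraic cell decomposition of the parameter space, using that $\Phi$ is algebraic so that the locus of $(x_0,r_0)$ whose $\Phi$--circle is almost tangent to a fixed one is semi-algebraic of bounded complexity. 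Random sampling then gives a cell decomposition into $\sim k$ cells each meeting $\sim N/k$ of the relevant varieties, and within each cell one applies the cinematic-curvature-based intersection control to bound incidences, optimizing in $k$. The main obstacle is exactly this step: one must check that the relevant ``almost-tangency'' sets are semi-algebraic of degree bounded in terms of $\Phi$ (this uses that $U_2$ is a disk, hence semi-algebraic, as flagged in Section~\ref{DefnAndInitialReductionSection}), and one must make the cell-decomposition bookkeeping interact cleanly with the $\delta$--discretization and the two-parameter $(\Delta,\theta)$ structure of the intersection bound, rather than with exact incidences.

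Third, with both \eqref{schlagThmControlOfIntersectionSize} and \eqref{schlagThmNumberIncidencesControl} in hand, I would invoke Schlag's argument from \cite{Schlag} verbatim to conclude. Concretely: pigeonhole $\mathcal A$ according to the value of $\lambda$ and the multiplicity, so it suffices to bound, for fixed $\lambda$, the measure of the set where $\gtrsim\delta^{-\eta}\lambda^{-2}$ tubes overlap; decompose the relevant high-multiplicity set into pieces indexed by dyadic angle scales and parameter-separation scales; on each piece, a double-counting of triples (point, pair of tubes through it) combined with the intersection bound \eqref{schlagThmControlOfIntersectionSize} reduces matters to counting almost-tangent pairs, which is controlled by \eqref{schlagThmNumberIncidencesControl} after passing to a $t$--bipartite subfamily; summing the dyadic contributions and absorbing the $\delta^{-\epsilon}$ losses into $\delta^{-\eta}$ gives the claimed bound for a subcollection $\tilde{\mathcal A}$ of proportional size. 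I would present this last part briefly, emphasizing that no new ideas enter and pointing precisely to the pages of \cite{Schlag} being reproduced.
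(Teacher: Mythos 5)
Your proposal matches the paper's approach exactly: the paper deduces Lemma~\ref{quantitativeMaximalFunctionBound} from Proposition~\ref{schlagsThm} (Schlag's black-box reduction), verifying hypothesis (i) via Property~\ref{intersectionSizeBounds} quoted from Kolasa--Wolff \cite{Wolff3} and hypothesis (ii) via the vertical algebraic cell decomposition and random-sampling machinery culminating in Lemma~\ref{biPartitePairRectControlLemma}. Your three-part organization (cite the Kolasa--Wolff intersection bound, prove the $(mn)^{3/4+\epsilon}$-type tangency count by cell decomposition of the almost-tangency varieties, then invoke Schlag's induction-on-scales argument) is precisely what the paper does.
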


In \cite{Schlag}, Schlag took Wolff's combinatorial incidence
result from \cite{Wolff4} and used it in conjunction with an
induction on scales argument to prove the analogue of Lemma
\ref{quantitativeMaximalFunctionBound} (in \cite{Schlag}, this is
Lemma 8). In order to state Schlag's theorem, we first need some
additional definitions.

\begin{definition}
For $X\subset B(b,\alpha)$, we define
\begin{equation}\label{defOfDeltaX}
\Delta_X(\Gamma,\tilde\Gamma)=\inf_{\substack{y\in \overline
X\colon \Phi(x_0,y)=r_0\\\tilde y\in \overline X\colon\Phi(\tilde
x_0,\tilde y)=\tilde r_0}} |y-\tilde
y|+\Big|\frac{\nabla_y\Phi(x_0,y)}{\norm{\nabla_y\Phi(x_0,y)}}-\frac{\nabla_y\Phi(\tilde
x_0,\tilde y)}{\norm{\nabla_y\Phi(\tilde x_0,\tilde y)}}\Big|.
\end{equation}
\end{definition}
Crucially,
\begin{equation*}
\Delta_{B(b,C^{-1}\alpha)}(\Gamma,\tilde\Gamma)\geq
\Delta_{B(b,\alpha)}(\Gamma,\tilde\Gamma),
\end{equation*}
but there exists a finite family of translates $\{t_i\}\subset
\RR^2$ (the cardinality of the family depends only on $C$) so that
\begin{equation}
\inf_i \Delta_{B(b+t_i,C^{-1}\alpha)}(\Gamma,\tilde\Gamma)\leq
\Delta_{B(b,\alpha)}(\Gamma,\tilde\Gamma).
\end{equation}
In the example $\Phi(x,y)=|x-y|$, $\Delta_X(\Gamma,\tilde\Gamma)$
describes how ``far'' (in $(x_0,r_0)$ parameter space) we would
need to move $\Gamma$ so that $\tilde\Gamma$ and the newly moved
curve $\Gamma^\prime$ are incident at some point in $X$. Indeed,
if $\Phi(x,y)=|x-y|$ and $X=\RR^2$ then
$\Delta_X(\Gamma,\tilde\Gamma)=\big||x_0-\tilde x_0| - |r_0-\tilde
r_0|\big|,$ provided $x_0,\tilde x_0\in U_1$ with $\diam(U_1)$
sufficiently small so that in particular, the only way circles can
be tangent is if they are internally tangent.

Let
\begin{equation}
d(\Gamma,\tilde\Gamma)=|x_0-\tilde x_0|+|r_0-\tilde r_0|.
\end{equation}
$d(\cdot,\cdot)$ is a metric on the space of curves. Throughout
our arguments, the particular choice of metric will not be
important since we will not care about multiplicative constants.

\begin{definition}\label{defnOfAtBipartitePair}
Let $\mathcal W,\mathcal B$ be collections of $\Phi$--circles. We
say that $\WB$ is a \emph{$t$--bipartite pair} if
\begin{align}
&|r_0-\tilde r_0|\geq\delta\ &\textrm{for all}\
\Gamma,\tilde\Gamma\in\WC\cup\BC,\\
&d(\Gamma,\tilde\Gamma)\in (t,2t)\ &\textrm{if}\ \Gamma\in \WC,\ \tilde\Gamma\in\BC,\\
&d(\Gamma,\tilde\Gamma)\in (0,t)\ &\textrm{if}\
\Gamma,\tilde\Gamma\in \WC\ \textrm{or}\ \Gamma,\tilde\Gamma\in
\BC.
\end{align}
\end{definition}
\begin{definition}\label{defnOfARectangle}
A \emph{$\dt$--rectangle} $R$ is the $\delta$--neighborhood
of an arc of length $\sqrt{\delta/t}$ of a $\Phi$--circle
$\Gamma$. We say that a $\Phi$--circle $\Gamma$ is \emph{incident}
to $R$ if $R$ is contained in the $C_1\delta$ neighborhood of
$\Gamma$. We say that $R$ is of type $(\gtrsim\mu,\gtrsim\nu)$
relative to a $t$--bipartite pair $\WB$ if $R$ is incident to at
least $C\mu$ curves in $\mathcal W$ and at least $C\nu$ curves in
$\mathcal B$ for some absolute constant $C$ to be specified later.
\end{definition}
We are now able to state Schlag's result.
\begin{proposition}[Schlag]\label{schlagsThm}
Let $\mathcal A$ be a family of $\Phi$--circles with
$\delta$--separated radii that satisfy the following requirements:
\begin{enumerate}[label=(\roman{*}), ref=(\roman{*})]
\item\label{schlagThmItemOne}\itemizeEqnVSpacing
\begin{equation}\label{schlagThmControlOfIntersectionSize}
|\Gamma^\delta\cap \tilde \Gamma^\delta\cap
B(b^\prime,C^{-1}\alpha)|\lesssim\frac{\delta^2}{(d(\Gamma,\tilde\Gamma)+\delta)^{1/2}(\Delta_{B(b,\alpha)}(\Gamma,\tilde\Gamma)+\delta)^{1/2}}
\end{equation}
for any $b^\prime$ in a sufficiently small neighborhood of $b$.
\item\label{schlagThmItemTwo} For any $t$--bipartite pair $\WB$,
with $t>C\delta$ for an appropriate choice of $C$;
$\WC,\BC\subset\mathcal A;\ \#\WC=m;\ \#\BC=n;$ and for any
$\epsilon>0$, the number of $(\gtrsim\mu,\gtrsim\nu)$
$(t,\delta)$--rectangles is at most
\begin{equation}\label{schlagThmNumberIncidencesControl}
C_\epsilon
(mn)^{\epsilon}\Big(\Big(\frac{mn}{\mu\nu}\Big)^{3/4}+\frac{m}{\mu}+\frac{n}{\nu}\Big).
\end{equation}
\end{enumerate}
Then Lemma \ref{quantitativeMaximalFunctionBound} holds for the
collection $\mathcal A$.
\end{proposition}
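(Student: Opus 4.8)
The plan is to recognize that the statement is, up to replacing circles by $\Phi$--circles, exactly Schlag's Lemma~8 in \cite{Schlag}, and that his proof of that lemma is already a ``black box'': it invokes the underlying family of curves only through two structural inputs, a two-tube intersection bound of the shape \eqref{schlagThmControlOfIntersectionSize} and a rectangle-counting bound of the shape \eqref{schlagThmNumberIncidencesControl} for $t$--bipartite pairs. Since these are precisely hypotheses \ref{schlagThmItemOne} and \ref{schlagThmItemTwo}, I would transcribe Schlag's argument, checking at each step that nothing special to Euclidean circles — rotation, translation or dilation invariance, or the two-point intersection property — is used beyond what these two hypotheses encode. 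The only supplementary facts required are elementary local-geometry properties of $\Phi$--circles: under the reductions of Section~\ref{DefnAndInitialReductionSection} they have curvature bounded above and below, so that on the scale $\sqrt{\delta/t}$ a $\Phi$--circle is approximated to within $O(\delta)$ by a $\dt$--rectangle, and two $\Phi$--circles that agree to within $\delta$ along such an arc are close in both $d$ and $\Delta_X$; all of these are available from \cite{Wolff3}.

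The skeleton I would run is the usual Wolff--Schlag high-multiplicity iteration. Fix $\Gamma\in\mathcal A$ and $\delta<\lambda<1$ and let $E_\Gamma$ be the set whose measure we must bound. For $y\in E_\Gamma$ at least $\delta^{-\eta}\lambda^{-2}$ tubes $\tilde\Gamma^\delta$ pass through $y$; sorting the corresponding curves by the dyadic size of $d(\Gamma,\tilde\Gamma)$, for some dyadic $t$ with $\delta\lesssim t\lesssim 1$ at least a $1/\log(1/\delta)$ fraction of them lie at distance $\sim t$ from $\Gamma$, and after a further dyadic pigeonholing one extracts a $t$--bipartite pair $\WB$ relative to which the relevant $\dt$--rectangles are heavy. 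Tiling $E_\Gamma\subset\Gamma^\delta$ by $\dt$--rectangles $R_j$, any $R_j$ meeting $E_\Gamma$ in positive measure is of type $(\gtrsim\mu,\gtrsim\nu)$ for appropriate dyadic $\mu,\nu$; hypothesis \ref{schlagThmItemTwo} bounds the number of such $R_j$, hypothesis \ref{schlagThmItemOne} controls the overlap of two tubes on each of them, and since $|R_j|\sim\delta^{3/2}t^{-1/2}$, summing over the $O(\log(1/\delta))$ dyadic choices of $t,\mu,\nu$ gives a bound $\delta^{-C\epsilon}\lambda\,|\Gamma^\delta|$ for the average of $|E_\Gamma|$ over $\Gamma\in\mathcal A$. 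Discarding the at most $\frac{1}{2C}\#\mathcal A$ curves for which $|E_\Gamma|$ exceeds $C$ times this average yields the subfamily $\tilde{\mathcal A}$.

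The powers of $\log(1/\delta)$ and of $\delta^{-\epsilon}$ accumulated in this one-step estimate are too lossy to be absorbed directly, and removing them is the role of the induction on scales: one proves the $\delta$--scale form of Lemma~\ref{quantitativeMaximalFunctionBound} assuming the $\rho$--scale form for all $\delta<\rho<1$ (in practice $\rho=\delta^{1/2}$), by partitioning the tubes into $\rho$--clusters, rescaling each cluster to unit scale — where the cinematic curvature hypotheses persist, with uniform constants, after the reductions already made — applying the inductive hypothesis, and reassembling; here hypothesis \ref{schlagThmItemOne} governs the finest scale and closes the recursion. The one point requiring care in this step is that the metric $d$ and the pseudo-distance $\Delta_X$ of \eqref{defOfDeltaX} transform compatibly under these rescalings, which is again carried out for the general cinematic family in \cite{Wolff3}.

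I expect the only genuine labour here to be this bookkeeping: confirming the ``virtually verbatim'' transfer, namely that Schlag's induction never uses an input not captured by \ref{schlagThmItemOne} and \ref{schlagThmItemTwo}. No new idea is needed at this stage — the genuinely new content of the paper is the verification of \ref{schlagThmItemTwo} for algebraic $\Phi$, carried out in the later sections — so the proof of the Proposition amounts to the citation \cite{Schlag} together with this compatibility check.
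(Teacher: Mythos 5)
Your approach is the same as the paper's: treat Schlag's Lemma~8 as a black box that consumes exactly hypotheses~\ref{schlagThmItemOne} and~\ref{schlagThmItemTwo}, cite~\cite{Schlag}, and run a compatibility check. The paper's own proof consists precisely of recording the two places where this check produces a nontrivial patch, which your ``bookkeeping'' would need to surface explicitly: Schlag actually requires the intersection bound on all of $\Gamma^\delta\cap\tilde\Gamma^\delta$ rather than just on $B(b^\prime,C^{-1}\alpha)$ (fixed by summing over finitely many translates of the ball), and he requires the rectangle count for \emph{all} $t$ rather than only $t>C\delta$ (fixed by using the trivial $\lesssim\delta^{-2}$ bound on the number of $(\delta,t)$--rectangles when $t<C\delta$). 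Your sketch of the high-multiplicity/pigeonholing/induction-on-scales skeleton is accurate but more than the paper itself supplies; the essential content is the citation plus those two adjustments.
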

\begin{proof}
The proof of this theorem can be found in \cite{Schlag}, Section
4. However, we need the following minor modifications.
\begin{itemize}
\item Schlag actually requires the bound
\begin{equation}\label{schlagRequiredBound}
|\Gamma^\delta\cap \tilde
\Gamma^\delta|\lesssim\frac{\delta^2}{\big(d(\Gamma,\Gamma)+\delta\big)^{1/2}\big(\Delta_{B(b,\alpha)}(\Gamma,\Gamma)+\delta\big)^{1/2}\phantom{\Big|}}.
\end{equation}
in place of \eqref{schlagThmControlOfIntersectionSize}. However,
\eqref{schlagRequiredBound} can be obtained from
\eqref{schlagThmControlOfIntersectionSize} by summing over
finitely
many translates of the ball $B(b,C^{-1}\alpha)$.%\\
\item Schlag stipulates that Requirement \ref{schlagThmItemTwo} in
the above theorem hold for all values of $t$ and $\delta$, not
merely those for which $t>C\delta$. However, there are at most
$\lesssim \delta^{-2}$ ($\delta,t$)--rectangles incident to $\WB$,
and if $t<C\delta$ we can use this fact in place the bound from
\eqref{schlagThmNumberIncidencesControl}.\qedhere
\end{itemize}
\end{proof}
The next sections shall be devoted to proving that any
$\delta$--separated family of $\Phi$--circles satisfy the two
requirements from Proposition \ref{schlagsThm}. Once this has been
established we will have proved Theorem \ref{theoremOne}. The
first requirement will not present much difficulty; indeed, it was
already proved by Kolasa and Wolff in \cite{Wolff3}, and it is
Property \ref{intersectionSizeBounds} in Section
\ref{sectionCinematicCurvImplications} below. Thus the bulk of our
efforts will be devoted to proving that the second requirement is
satisfied. This will appear as Lemma
\ref{biPartitePairRectControlLemma} in Section
\ref{sectionCountingIncidences}.
\section{Algebraic Considerations}

Let $\Gamma=\Gamma(x_0,r_0)$ be a $\Phi$--circle and $X\subset
B(b,\alpha)$ an open semi-algebraic set of dimension 2 (see
Appendix \ref{realAlgGeoAppendix} for the definition of the
dimension of a semi-algebraic set); in our discussion below we
will only consider balls. For $w = (w_1,w_2,w_3)\in\RR^3$, let
\begin{equation}
\begin{split}
 V_{\Gamma,X,w}=\{(x,r,y)\in U_1\times(1-\tau,1)\times X\colon \Phi(x_0,y)-r_0=w_1,\\
\Phi(x,y)-r=w_2, \nabla_y\Phi(x_0,y)\wedge\nabla_y\Phi(x,y)=w_3&\}\label{defnOfV},
\end{split}
\end{equation}
where
\begin{equation*}
(z^{(1)},z^{(2)})\wedge(\tilde z^{(1)},\tilde
z^{(2)})=z^{(1)}\tilde z^{(2)}-z^{(2)}\tilde z^{(1)}.
\end{equation*}
$V_{\Gamma,X,w}$ should be thought of as the space of pairs $(\tilde\Gamma,y)$ with $\tilde\Gamma$ a $\Phi$--circle tangent to $\Gamma$ at the point $y\in X$. Intuitively, we can think of $w_1,w_2,w_3$ as being 0. However,
setting $w_1,w_2,$ $w_3=0$ might cause $V_{\Gamma,X,w}$ to fail to
have the correct dimension. Thus we shall choose a very small
``generic'' choice of $w_1,w_2,w_3$ which fixes this problem. This
will be elaborated upon in Lemma \ref{coneIsSemiAlgebraic}.

Let
\begin{equation}
S_{\Gamma,X,w}=\big(\pi_{(x,r)}V_{\Gamma,X,w}\big)\cap \{(x,r)\colon |x-x_0|>C\delta\}
\end{equation}
for an appropriately chosen $C$, where $\pi_{(x,r)}\colon
(x,r,y)\mapsto (x,r)$ is the projection operator. $S_{\Gamma,X,w}$ should be thought of as the set of $\tilde\Gamma$ that are incident to $\Gamma$ at \emph{some} point $y\in X$. In the example
where $\Phi(x,y)=|x-y|$, $S_{\Gamma,X,0}$ is a section of the
right-angled ``light cone'' with vertex $(x_0,r_0)\in\RR^3,$ i.e.
\begin{equation*}
S_{\Gamma,X,0}\subset\{(x,r)\colon |x-x_0|=|r-r_0|\}.
\end{equation*}
\begin{lemma}\label{coneIsSemiAlgebraic}
For an appropriate choice of $0\leq w_1,w_2,w_3<C^{-1}\delta$,
$S_{\Gamma,X}$ is a semi-algebraic set of bounded complexity.
Furthermore, if $X=B(b,\alpha)$ then $S_{\Gamma,X}$ has
(semi-algebraic) dimension 2.
\end{lemma}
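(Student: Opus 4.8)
The plan is to establish semi-algebraicity first and dimension second. For the first part, note that $\Phi$ is algebraic, so each of the three defining relations in \eqref{defnOfV}—namely $\Phi(x_0,y)-r_0=w_1$, $\Phi(x,y)-r=w_2$, and $\nabla_y\Phi(x_0,y)\wedge\nabla_y\Phi(x,y)=w_3$—is a polynomial (or at worst a quantifier-free first-order) condition on $(x,r,y)\in U_1\times(1-\tau,1)\times X$, once one clears denominators coming from $\Phi$ being a quotient of polynomials and uses that $X$ is a semi-algebraic ball. Here $x_0,r_0$ are fixed parameters, so these are genuinely conditions cutting out a semi-algebraic set $V_{\Gamma,X,w}$ of complexity bounded solely in terms of the degree of $\Phi$ (and the fixed number of inequalities describing $U_1\times(1-\tau,1)\times X$). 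By the Tarski–Seidenberg theorem (see the appendix), the projection $\pi_{(x,r)}V_{\Gamma,X,w}$ is again semi-algebraic of bounded complexity, and intersecting with the semi-algebraic condition $|x-x_0|>C\delta$ preserves both properties. Hence $S_{\Gamma,X,w}$ is semi-algebraic of bounded complexity for \emph{every} choice of $w$; the role of the ``appropriate choice'' of $w$ is entirely for the dimension statement, not for semi-algebraicity.

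For the dimension statement, first observe that $S_{\Gamma,X,w}\subset U_1\times(1-\tau,1)\subset\RR^3$, so its dimension is at most $3$; the claim is that a generic small $w$ forces it to be exactly $2$ when $X=B(b,\alpha)$. The heuristic is that $V_{\Gamma,X,0}$ parametrizes pairs $(\tilde\Gamma,y)$ with $\tilde\Gamma$ tangent to $\Gamma$ at $y$: fixing $y\in X$ (two parameters) imposes three scalar equations on the three-dimensional parameter $(x,r)$, and the cinematic curvature condition \eqref{cinematicCurvatureCondition} is precisely the statement that the relevant Jacobian is nondegenerate, so the fiber over each $y$ is (generically) zero-dimensional; thus $\dim V_{\Gamma,X,0}=2$ and the projection to $(x,r)$-space is generically finite-to-one onto a $2$-dimensional set. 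The subtlety is that at $w=0$ degenerate fibers can appear (e.g.\ $y$-values where the tangency equations become dependent, or where $\nabla_y\Phi$ degenerates), which could either collapse the image or, via non-properness of the projection, inflate it. I would fix this by a Sard/generic-value argument: the map $(x,r,y)\mapsto\big(\Phi(x_0,y)-r_0,\ \Phi(x,y)-r,\ \nabla_y\Phi(x_0,y)\wedge\nabla_y\Phi(x,y)\big)$ is a polynomial map from an open subset of $\RR^5$ to $\RR^3$, so by the semi-algebraic Sard theorem its set of critical values is a semi-algebraic set of dimension $<3$; choosing $w=(w_1,w_2,w_3)$ to be a regular value with $0\le w_i<C^{-1}\delta$ (possible since regular values are dense) makes $V_{\Gamma,X,w}$ a smooth $2$-dimensional manifold away from the boundary contributed by the constraint region. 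One then checks, using \eqref{cinematicCurvatureGradientCondition}–\eqref{cinematicCurvatureCondition} and the uniform bounds secured in Section \ref{DefnAndInitialReductionSection}, that on this regular fiber the projection $\pi_{(x,r)}$ is finite-to-one—so $\dim S_{\Gamma,X,w}=\dim V_{\Gamma,X,w}=2$—and that the image is genuinely $2$-dimensional rather than lower (nonemptiness plus the fact that $X=B(b,\alpha)$ is large enough, via the light-cone picture in the $\Phi(x,y)=|x-y|$ model case, guarantees a $2$-parameter family of incident curves).

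The main obstacle I anticipate is the dimension count, specifically controlling the \emph{non-properness} of the projection $\pi_{(x,r)}$: even with $V_{\Gamma,X,w}$ of dimension $2$, the projection could a priori have positive-dimensional fibers over some locus, which is exactly why one cannot simply take $w=0$. Making the genericity argument quantitative—ensuring the good choice of $w$ can be taken in the window $[0,C^{-1}\delta)$ with constants depending only on $\Phi$—requires knowing that the critical-value set of the relevant polynomial map has bounded degree, which follows from effective Bézout-type bounds for semi-algebraic sets but needs to be invoked carefully. A secondary point to handle is that $S_{\Gamma,X,w}$ is defined with the open condition $|x-x_0|>C\delta$, which removes a neighborhood of the ``center'' locus where $\Phi$-circles near $\Gamma$ itself live; I expect this only removes a lower-dimensional boundary piece and does not affect the dimension count, but it should be noted explicitly.
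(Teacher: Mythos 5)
Your semi-algebraicity argument matches the paper's: algebraicity of $\Phi$ makes the defining relations in \eqref{defnOfV} polynomial/algebraic, and Tarski--Seidenberg handles the projection. Your observation that semi-algebraicity holds for \emph{every} $w$ and that genericity is only needed for the dimension claim is correct and slightly cleaner than the paper's phrasing. The upper bound $\dim S_{\Gamma,X,w}\le 2$ is also obtained the same way: Sard gives a regular value $w$ in the window, so $V_{\Gamma,X,w}$ has codimension $3$ in $\RR^5$, hence dimension $2$, and semi-algebraic projection cannot raise dimension. (Your worry about quantitative genericity is largely unnecessary: Sard gives \emph{measure-zero} critical values, not merely a dense complement, so any positive-measure window $[0,C^{-1}\delta)^3$ contains a regular value; no effective B\'ezout bound is needed for this step.)

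Where you diverge from the paper is the lower bound $\dim S_{\Gamma,B(b,\alpha),w}\ge 2$. You propose showing the projection $\pi_{(x,r)}|_{V_{\Gamma,X,w}}$ is finite-to-one using the cinematic curvature conditions, which would give $\dim S = \dim V = 2$; you even flag non-properness of the projection as the main obstacle. But you leave this as ``one then checks,'' and that check is genuinely the hard part---it requires something like Property \ref{tangencyPoint} (uniqueness of the point of parallel normals) applied on the fiber over each $(x,r)$, plus care at the boundary of the constraint region. The paper sidesteps the fiber analysis entirely: it defers the lower bound to Corollary \ref{coneHasDimTwo}, which explicitly constructs a $2$-parameter embedding of a disk into $S_{\Gamma,B(b,\alpha)}$ via Lemma \ref{lemmaDistCompDelta} (for each $\tilde x_0$ near $x_0$ there is an $r'$ with $(\tilde x_0,r')\in S_{\Gamma,B(b,\alpha)}$), so $S$ is locally a graph over the $x$-plane and hence a smooth $2$-manifold. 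This geometric route is the content of the remark following the lemma in the paper (``algebraic considerations give $\le 2$, differential-geometric give $\ge 2$''). Your proposal is plausible but not complete on the lower bound; the paper's embedding argument is more direct and avoids the projection-fiber question you correctly identified as delicate.
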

\begin{proof}
We shall first show that if $w_1,w_2,w_3$ are chosen appropriately
then $V_{\Gamma,X,w}$ is a semi-algebraic set of codimension 3. It
suffices to show that the the defining functions in
\eqref{defnOfV} are algebraic functions whose zero-sets intersect
transversely. $\Phi(x_0,y)-r_0$ and $\Phi(x,y)-r$ are immediately
seen to be smooth and algebraic since $\Phi$ is smooth and
algebraic. The components of $\nabla_y\Phi(x_0,y)$ and
$\nabla_y\Phi(x,y)$ are smooth and algebraic since the partial
derivatives of a smooth algebraic function are smooth and
algebraic, and thus $\nabla_y\Phi(x_0,y)\wedge\nabla_y\Phi(x,y)$
is smooth and algebraic. The complexity of these functions is
clearly independent of the choice of $\Gamma$. Finally, by Sard's
theorem we can find $0\leq w_1,w_2,w_3<C^{-1}\delta$ such that
$(w_1,w_2,w_3)$ is a regular value of the map
\begin{equation*}
(x,r,y)\mapsto\big(\Phi(x_0,y)-r_0,\ \Phi(x,y)-r,\
\nabla_y\Phi(x_0,y)\wedge\nabla_y\Phi(x,y)\big).
\end{equation*}
For such a choice of values of $w_1,w_2,w_3$ we have that
$S_{\Gamma,X,w}$ has geometric codimension 3, and thus
semi-algebraic codimension 3, as desired (see Appendix
\ref{realAlgGeoAppendix} for a review of the relevant real
algebraic geometry).

By the Tarski-Seidenberg theorem, $\pi_{(x,r)}V_{\Gamma,X,w}$ is
semi-algebraic of bounded complexity, and thus so is
$S_{\Gamma,X,w}$. At this point, the dimension of the components
of $S_{\Gamma,X,w}$ could be 0,1, or 2. However, we shall show in
Corollary \ref{coneHasDimTwo} below that if $X=B(b,\alpha)$, then
$S_{\Gamma,X,w}$ is a smooth manifold of dimension 2 or 3, and
thus the components of $S_{\Gamma,X}$ are in fact of
(semi-algebraic) dimension 2.
\end{proof}
\begin{remark}
It is somewhat curious to note that in our proof, we use algebraic
considerations to show $\dim(S_{\Gamma,X,w})\leq 2$ and
differential geometric considerations to show
$\dim(S_{\Gamma,X,w})\geq 2$, and thus conclude that
$\dim(S_{\Gamma,X,w})=2$.
\end{remark}
\begin{definition}
Abusing notation slightly, we shall suppress the dependence of
$S_{\Gamma,X,w}$ on $w$, and we shall define $S_{\Gamma,X}$ to be
$S_{\Gamma,X,w}$ for an appropriate choice of $w$, the existence
of which is guaranteed by Lemma \ref{coneIsSemiAlgebraic}. None of
our arguments below will depend on the specific choice of $w$, and
all of the constants in the estimates below will be independent of
the choice of $w$, provided $|w|<C^{-1}\delta$ for a sufficiently
large constant $C$.
\end{definition}
We have defined $S_{\Gamma,X}$ and $\Delta_{X}$ so that
\begin{equation}
S_{\Gamma,X,0}=\{\Gamma^\prime\colon\Delta_X(\Gamma,\Gamma^\prime)=0\},
\end{equation}
and thus since $0\leq w_1,w_2,w_3\leq C^{-1}\delta$,
\begin{align}
&S_{\Gamma,X} \in \{\Gamma^\prime
\colon\Delta_X(\Gamma,\Gamma^\prime)=0\}+B(0,C^{-1}\delta),\label{surfaceContainmentOne}\\
&\{\Gamma^\prime\colon\Delta_X(\Gamma,\Gamma^\prime)=0\}\in
S_{\Gamma,X}+B(0,C^{-1}\delta),\label{surfaceContainmentTwo}
\end{align}
where the $+$ symbol denotes the Minkowski sum. These inclusions
are the key facts linking the algebraic and geometric
properties of $\Phi$. Lemma \ref{coneIsSemiAlgebraic} allows us to
use the technique of semi-cylindrical algebraic decompositions
(aka vertical algebraic decompositions) to decompose $\RR^3$ into
a collection of ``cells'' adapted to a collection of surfaces
$\{S_{\Gamma,X}\}$. Informally, a cell is an open subset of
$\RR^3$ whose boundary consists of pieces of the surfaces from the
collection $\{S_{\Gamma,X}\}$ as well as additional surfaces that
are added to guarantee that the cells have certain favorable
properties. More precisely we have the following result.
\begin{lemma}\label{constDescrComplexCellDecomp}
Let $\mathcal D$ be a collection of $\Phi$--circles, $\#\mathcal
D=N$. Then there exists an algorithm for creating a vertical
decomposition of $U_1\times(1-\tau,1)$ (recall that $U_1$ and
$\tau$ were specified in Section
\ref{DefnAndInitialReductionSection} and depend only on $\Phi$)
into $\lesssim N^3\log N$ open (in $\RR^3)$ cells $\{\Omega_i\}$
such that $U_1\times(1-\tau,1)$ is the union of sets of the
following types:
\begin{itemize}
\item cells,%
\item the dividing surfaces
$\{S_{\Gamma,B(b,\alpha)}\colon\Gamma\in\mathcal D\},$%
\item vertical walls: 2--dimensional semi-algebraic sets whose
projections under the map $\pi_x\colon (x,r)\mapsto x$ are
1--dimensional semi-algebraic sets.
\end{itemize}

The cells in this decomposition have the property that
\begin{equation}
\Omega\cap S_{\Gamma, B(b,\alpha)}=\emptyset\ \textrm{for all
cells}\ \Omega\ \textrm{and all}\ \Gamma\in\mathcal D.
\end{equation}
Furthermore, for each cell $\Omega$ in the decomposition, there is
a bounded number (6 will suffice) of dividing surfaces such that
$\Omega$ is one of the cells arising from the decomposition
algorithm applied to this subcollection of surfaces (i.e.~the
existence of the other $N-6$ surfaces is irrelevant if all we care
about is the cell $\Omega$).
\end{lemma}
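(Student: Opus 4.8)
The plan is to invoke the standard theory of vertical (semi-cylindrical) algebraic decompositions for a finite family of surfaces in $\RR^3$, applied to the collection $\{S_{\Gamma,B(b,\alpha)}\colon\Gamma\in\mathcal D\}$, and then to track how the quantitative bounds specialize in our setting. By Lemma \ref{coneIsSemiAlgebraic}, each $S_{\Gamma,B(b,\alpha)}$ is a semi-algebraic set of dimension $2$ and of complexity bounded by a constant depending only on $\Phi$ (and in particular independent of $\Gamma$ and of $N$). First I would recall the construction: given $N$ surfaces of bounded complexity, one first forms the arrangement and then, fiberwise over the $x$--plane, slices each vertical line $\{x\}\times\RR$ (in the $r$ coordinate) at the points where it meets the surfaces, obtaining a one-dimensional vertical decomposition; projecting the breakpoints down and using a generic direction, one builds the two-dimensional ``silhouette'' curves and their arrangement in the $x$--plane, refines that planar arrangement into vertically convex pieces (this is where the planar vertical decomposition of $O((\text{number of curves})^2)$ cells enters), and then lifts back up. The vertical walls are exactly the preimages under $\pi_x$ of the one-dimensional features of this planar arrangement, so they have the stated property that $\pi_x$ sends them to one-dimensional semi-algebraic sets.

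Next I would address the cell count. The silhouette/projection curves arising from $N$ surfaces of bounded complexity number $O(N)$ (each surface contributes $O(1)$ of them), and pairwise intersections contribute $O(N^2)$ vertices; a vertical decomposition of an arrangement of $O(N)$ bounded-complexity curves in the plane has $O(N^2)$ cells. Lifting each planar cell produces $O(N)$ three-dimensional cells stacked above it in the $r$ direction (one between each consecutive pair of the $O(N)$ surface sheets over that planar cell), giving $O(N^3)$ cells in the naive bound. The extra $\log N$ factor in the statement comes from the standard refinement used to get the ``bounded local description'' property — either from a randomized incremental / hierarchical construction, or from the known sharpened bound for vertical decompositions in three dimensions; I would cite the computational-geometry literature (e.g.\ the vertical decomposition bounds of Chazelle–Edelsbrunner–Guibas–Sharir type) for the $O(N^3\log N)$ bound together with the property that each cell is determined by $O(1)$ of the input surfaces. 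Since each input surface has complexity $O_\Phi(1)$, all constants absorbed into the $\lesssim$ depend only on $\Phi$.

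The two structural properties then follow more or less by construction. That $\Omega\cap S_{\Gamma,B(b,\alpha)}=\emptyset$ for every cell $\Omega$ and every $\Gamma\in\mathcal D$ is exactly the defining feature of a vertical decomposition adapted to this surface family: the open cells are chosen to lie strictly between consecutive sheets, so no cell meets any dividing surface, and $U_1\times(1-\tau,1)$ is partitioned into the open cells, the (relatively open strata of the) dividing surfaces, and the vertical walls. For the ``bounded number of relevant surfaces'' clause: in the vertical-decomposition construction each cell $\Omega$ is pinned down by the surfaces forming its top and bottom, the surfaces whose silhouettes/intersection curves bound the planar cell under it, and the vertical walls erected over those — a total of at most $6$ surfaces suffices (this is the standard ``constant-size certificate'' of a vertical decomposition cell in $\RR^3$), and running the same algorithm on just those $\le 6$ surfaces reproduces $\Omega$ as one of its cells. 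I expect the main obstacle to be purely expository: correctly quoting the version of the three-dimensional vertical decomposition theorem that simultaneously gives the $N^3\log N$ cell bound \emph{and} the constant-description property, and checking that the bounded complexity of the $S_{\Gamma,B(b,\alpha)}$ (guaranteed by Lemma \ref{coneIsSemiAlgebraic}) is genuinely uniform in $\Gamma$ so that all implied constants depend on $\Phi$ alone.
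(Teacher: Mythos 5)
Your proposal follows essentially the same route as the paper: invoke the vertical-decomposition technology of Chazelle, Edelsbrunner, Guibas, and Sharir, together with the uniform complexity bound on $S_{\Gamma,B(b,\alpha)}$ from Lemma \ref{coneIsSemiAlgebraic}, and defer the rigorous details to \cite{Chazelle}. The paper's in-text proof is itself essentially a citation (it explicitly notes that no complete proof of this exact statement exists in the literature), with a longer expository sketch in Appendix \ref{cellDecompositionSection}. Your account of the vertical-wall structure, the bounded-local-description property, and the uniformity of constants in $\Phi$ is consistent with that appendix.

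The one place your sketch goes astray is the heuristic cell count. You project only $O(N)$ silhouette curves to the $x$--plane, observe $O(N^2)$ pairwise intersections, form $O(N^2)$ planar cells, and lift $O(N)$ sheets to reach a ``naive $O(N^3)$,'' attributing the extra $\log N$ to a refinement penalty for the bounded-description property. This omits the projections of the intersection curves $S_i\cap S_j$ over the $\sim N^2$ pairs of surfaces, which must also be included in the planar arrangement; the crude project-and-lift count is then $O(N^4)$ planar cells and $O(N^5)$ three-dimensional cells. The bound $N^3\log N$ is therefore not a small penalty on top of a naive $N^3$, but a genuine improvement on the crude count, obtained by observing that only intersections that actually appear as vertices of the pre-cell bases $V_\Omega$ contribute; the paper's appendix emphasizes that controlling these is precisely where Davenport--Schinzel sequence bounds enter. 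Since both you and the paper ultimately cite \cite{Chazelle} for the rigorous bound, your conclusion stands, but the explanation of where $N^3\log N$ comes from would be misleading if left as written.
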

\begin{proof}
This statement follows from the techniques developed by Chazelle,
Edelsbrunner, Guibas, and Sharir in \cite{Chazelle}.
Unfortunately, while Theorem \ref{constDescrComplexCellDecomp} is
claimed in \cite{Chazelle} and follows (with some effort) from the
methods described in Chapter 8 of \cite{sharir}, we are unaware of a
complete and detailed proof of Theorem
\ref{constDescrComplexCellDecomp} in the literature. The author
intends to present such a proof in his forthcoming PhD thesis. In
the interests of keeping this paper self contained, we will give a
brief expository sketch of the vertical algebraic decomposition in
Appendix \ref{cellDecompositionSection}.
\end{proof}
\begin{lemma}\label{randomSamplingLemma}
Let $\BC$ be a collection of $\Phi$--circles, $\#\BC=n$. Randomly
select (see Remark \ref{randomSelectionRem})  a subset $\mathcal
D\subset\BC$ with $\#\mathcal D=N<C^{-1}n$, and let
$\{\Omega_i\}_1^{M},\ M\leq N^3\log N$ be the cells from Lemma
\ref{constDescrComplexCellDecomp}. Then with high probability (see
Remark \ref{highProbRem}) we have that for each $i$,
\begin{equation}\label{notTooManySurfacesCutACell}
\#\{\Gamma\in\mathcal\BC\colon
S_{\Gamma,B(b,\alpha)}\cap\Omega_i\neq\emptyset\}\lesssim\frac{N\log
n}{n}.
\end{equation}
\end{lemma}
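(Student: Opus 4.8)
The plan is to prove Lemma \ref{randomSamplingLemma} by the standard $\epsilon$-net / random sampling argument of Clarkson and Shor, adapted to vertical cell decompositions. The key structural fact, supplied by the last sentence of Lemma \ref{constDescrComplexCellDecomp}, is that every cell $\Omega_i$ of the decomposition associated to $\mathcal D$ is already a cell of the decomposition associated to some subcollection $\mathcal D_i\subset\mathcal D$ of size at most $6$. In the language of the abstract framework of Clarkson-Shor, the cells are ``configurations'' of bounded ``defining set'' size $b\le 6$: each cell $\Omega$ is determined by $\le 6$ of the surfaces $S_{\Gamma,B(b,\alpha)}$, and its ``conflict set'' (or ``killing set'') is $\{\Gamma\in\BC\colon S_{\Gamma,B(b,\alpha)}\cap\Omega\neq\emptyset\}$, whose size is what \eqref{notTooManySurfacesCutACell} bounds. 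The cell property $\Omega\cap S_{\Gamma,B(b,\alpha)}=\emptyset$ for all $\Gamma\in\mathcal D$ says exactly that the cells appearing in the decomposition of $\mathcal D$ have conflict set \emph{disjoint} from $\mathcal D$ — i.e.\ they are ``$0$-relevant'' configurations for $\mathcal D$.

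The heart of the argument is then the following computation. Fix an integer $k$ with $k\sim \frac{n}{N}\log n$ and call a cell $\Omega$ (arising from some $6$-element subset of $\BC$) \emph{heavy} if its conflict set has size $\ge k$. I want to show that with high probability no heavy cell survives in the decomposition of the random sample $\mathcal D$ — equivalently, every cell of the $\mathcal D$-decomposition has conflict size $< k \lesssim \frac{n}{N}\log n$, wait, I should double-check the exponent: the claimed bound is $\frac{N\log n}{n}$, which is $<1$ once $N<n$, so this cannot be literally a count of curves. I would interpret \eqref{notTooManySurfacesCutACell} as it is surely intended: with $N$ chosen so that $\frac{n}{N}\log n$ is the right order, the conflict set has size $\lesssim \frac{n}{N}\log n$. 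Granting that reading, the estimate to prove is: for every $6$-tuple of curves in $\BC$, the probability that the resulting cell both appears in the $\mathcal D$-decomposition (i.e.\ is $0$-relevant: none of its $\le 6$ defining surfaces' ``conflicting'' curves lie in $\mathcal D$ — more precisely none of the conflict-set curves lie in $\mathcal D$) and has conflict size $\ge C\frac{n}{N}\log n$ is tiny. If a cell has conflict size $s$, the probability that $\mathcal D$ (a uniformly random $N$-subset of $\BC$) misses all $s$ of those curves while containing the $\le 6$ defining curves is at most $\binom{n-s}{N}/\binom{n}{N}\lesssim (1-N/n)^{s}\le e^{-Ns/n}$; once $s\ge C\frac{n}{N}\log n$ this is $\le n^{-C}$. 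Since there are only $\lesssim n^{6}$ candidate cells (one per $6$-tuple), a union bound over all of them kills the bad event provided $C$ is large enough, giving the conclusion with high probability. The passage from ``$N$-subset'' sampling to the ``each element independently with probability $p=N/n$'' model (Remark \ref{randomSelectionRem}) is routine and changes nothing.

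Concretely, the steps in order: (1) invoke the bounded-defining-set property from Lemma \ref{constDescrComplexCellDecomp} to reduce to a fixed finite list, of size $O(n^{6})$, of candidate cells, each determined by $\le 6$ curves; (2) for a candidate cell with conflict set of size $s$, bound the probability of the bad event ``cell is $0$-relevant for $\mathcal D$'' \emph{and} ``$s\ge C\frac{n}{N}\log n$'' by $e^{-Ns/n}\le n^{-C'}$ using the hypergeometric/exponential tail; (3) apply the union bound over all $O(n^{6})$ candidates so that $n^{6}\cdot n^{-C'}\to 0$ for $C'$ large, which fixes the constant $C$ in the threshold $C\frac{n}{N}\log n$; (4) translate back via Remark \ref{highProbRem} and Remark \ref{randomSelectionRem}. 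The main obstacle — and the only genuinely nontrivial input — is not the probability estimate but justifying that the cells really do have defining sets of size $O(1)$ in the Clarkson-Shor sense and that the relevant ``conflict set'' is exactly $\{\Gamma\colon S_{\Gamma,B(b,\alpha)}\cap\Omega\neq\emptyset\}$; this is precisely the content of the last two displayed properties in Lemma \ref{constDescrComplexCellDecomp} (the cells miss all dividing surfaces of $\mathcal D$, and each cell is reproduced by a $6$-element subcollection), so once those are in hand the random-sampling bound is the textbook argument and I would present it as such.
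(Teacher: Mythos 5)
Your argument is correct and is essentially the same as the paper's, which sketches exactly this Clarkson--Shor random-sampling computation in Appendix \ref{cellDecompositionSection}: reduce to $O(n^{6})$ candidate cells via the bounded-defining-set property from Lemma \ref{constDescrComplexCellDecomp}, bound the probability that a heavy cell survives by $(1-\lambda/n)^{N}\lesssim n^{-C}$ once $\lambda\gtrsim\frac{n\log n}{N}$, and union bound. You are also right that the displayed bound $\frac{N\log n}{n}$ in \eqref{notTooManySurfacesCutACell} is a typo for $\frac{n\log n}{N}$, as confirmed both by the appendix (where $\lambda=C\frac{n\log n}{N}$) and by item \ref{numberOfCellIncidentCircles} of Lemma \ref{cellDecompLemma3}, which is the statement \eqref{notTooManySurfacesCutACell} is used to prove.
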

\begin{remark}\label{randomSelectionRem}
To obtain our random selection we shall take a uniformly
distributed random sample with replacement from $\BC$. However,
our algorithm will only work if the elements of the sample are all
distinct. By requiring that $N\leq \frac{1}{C}n$ for $C$
sufficiently large, we can ensure that this will occur with high
probability, so this assumption will not cause difficulty.
\end{remark}
\begin{remark}\label{highProbRem}
By ``high probability'' we mean that for any probability $P<1$ we
can select a choice of constant $C$ in the quasi-inequality in
\eqref{notTooManySurfacesCutACell} so that the decomposition
satisfies \eqref{notTooManySurfacesCutACell} with probability at
least $P$. Later in the proof of Theorem \ref{theoremOne} we shall
need the above decomposition to satisfy additional properties
which also occur with high probability (relative to another set of
constants that we can weaken at will). We can ensure that all of
these properties are simultaneously satisfied by requiring that
each of the properties are separately satisfied with sufficiently
high probability and using the trivial union bound.
\end{remark}
\begin{proof}
Lemma \ref{randomSamplingLemma} follows from Lemma
\ref{constDescrComplexCellDecomp} by the technique of random
sampling (see e.g.~\cite{Clarkson}). Again, we shall briefly
review this technique in Appendix \ref{cellDecompositionSection}.
\end{proof}
Lemma \ref{constDescrComplexCellDecomp} (which is only used to
prove Lemma \ref{randomSamplingLemma}) is the only place where
Lemma \ref{coneIsSemiAlgebraic} is used, and it is thus the only
place where we use the requirement that $\Phi$ be algebraic. We
shall discuss in Section \ref{RiemannianMetricIntersectionSection}
some conjectures about how to obtain Lemma
\ref{constDescrComplexCellDecomp} through other (less algebraic)
means, though our best attempts in this direction have thus far
yielded only provisional results.

{\bf Added 2/14/2012}: In a recent paper, the author has obtained an analogue of Lemma 13 using the discrete polynomial ham sandwich theorem of Guth and Katz in place of Lemma \ref{constDescrComplexCellDecomp}. With this new technique, the requirement that $\Phi$ be algebraic is no longer necessary, i.e.~Theorem \ref{theoremOne} is established for all defining functions $\Phi$ satisfying the cinematic curvature condition. See \cite{Zahl} for further details.
\section{Cinematic Curvature and its Implications}
\label{sectionCinematicCurvImplications}%
Many of Wolff's
arguments from \cite{Wolff1} rely on the local differential
properties of families of circles. The relevant properties are
captured by the notion of cinematic curvature defined in the
introduction. In \cite{Wolff3}, Kolasa and Wolff establish several
key properties of families of curves with cinematic curvature
which we shall recall below.
\begin{property}[Straightening out]\label{straighteningOut} Let $x_0\in U_1$. Then we can find a
diffeomorphism $\psi_{x_0}\colon U_2^\prime\to U_2$ and a choice
of $r_0=r_0(x_0)$ such that
\begin{equation*}
\Phi(x_0,\psi_{x_0}(y))-r_0=y^{(2)}
\end{equation*}
where $U_2^\prime$ is an appropriately chosen domain (which may no
longer be a disk). Furthermore for fixed $y_0$,
\begin{equation}\label{continuityofDiffeo}
\psi_{x_0}(y_0)\ \textrm{and}\ r_0(x_0)\ \textrm{are continuous
functions of}\ x_0.
\end{equation}
This is discussed on page 126 of \cite{Wolff3}. To simplify notation,
we shall say that $\Phi$ has been \emph{straightened out} around
$x_0$ if we (temporarily) replace the function $\Phi(x_0,\cdot)$
with $\Phi(x_0,\phi_{x_0}(\cdot))-r_0(x_0)$, i.e.~in
``straightened out'' coordinates, $\Phi(x_0,y)=y^{(2)}$. Note that
if we straighten out around $x_0$ then in this new coordinate
system $\Phi$ might no longer be algebraic. This will not pose any
problems to our analysis below; we shall only be straightening out
to simplify the proofs of certain diffeomorphism-invariant
statements, and the statement can then be ``pulled back'' to the
original (semi-algebraic) $\Phi$. This process may change some of
the constants involved in the relevant statements. However
\eqref{continuityofDiffeo} will guarantee that the constants are
worsened by at most a bounded amount so we can safely ignore this
problem.
\end{property}
\begin{property}[Derivative bounds]\label{cinematicDerivative} If we straighten out $\Phi$ at
$x_0$ then for $y\in B(0,\alpha)$,
\begin{equation}\label{equivalentCinematicCurvatureConditionOne}
|\partial_{y^{(1)}}\Phi(x,y)|+|\partial^2_{y^{(1)}}\Phi(x,y)|\sim
|x-x_0|,
\end{equation}
\begin{equation}\label{equivalentCinematicCurvatureConditionTwo}
|\partial_{y^{(2)}}\Phi(x,\psi_{x_0,r_0}(y))|\sim 1,
\end{equation}
where $\partial_{y^{(1)}}$ denotes the partial derivative in the
$y^{(1)}$--direction, etc. The constants in the quasi-equalities
above are uniform in all variables. Indeed, since the cinematic
curvature condition is diffeomorphism invariant,
\eqref{equivalentCinematicCurvatureConditionOne} and
\eqref{equivalentCinematicCurvatureConditionTwo} are equivalent to
the cinematic curvature condition. This is addressed in Equation (21) of \cite{Wolff3} and the surrounding discussion.
\end{property}
\begin{property}[Unique point of parallel normals]\label{tangencyPoint}%
Let $\Gamma,\tilde\Gamma$ be $\Phi$--circles with
\begin{equation*}
\Delta_{B(b,C^{-1}\alpha)}(\Gamma,\tilde\Gamma)\leq
{C^\prime}^{-1}|x_0-\tilde x_0|
\end{equation*}
for a sufficiently large constant $C^\prime$. Then there is a
unique point
\begin{equation*}
\xi=\xi(x_0,r_0,\tilde x_0)\in\Gamma\cap B(0,\alpha)
\end{equation*}
such that
\begin{equation}\label{xiDefn}
\nabla_y\Phi(x_0,\xi)\wedge \nabla_y\Phi(\tilde x_0,\xi)=0.
\end{equation}
Furthermore,
\begin{equation}\label{xiComparableDelta}
|\Phi(\tilde x_0,\xi)-\tilde
r_0|\lesssim\Delta_{B(b,C^{-1}\alpha)}(\Gamma,\tilde\Gamma),
\end{equation}
and
\begin{equation}\label{intersectionContainedInBall}
\Gamma\cap\tilde\Gamma\cap B(b,C^{-2}\alpha)\subset B\Bigg(\xi,
C\Big(\frac{\Delta_{B(b,C^{-1}\alpha)}(\Gamma,\tilde\Gamma)}{|x_0-\tilde
x_0|}\Big)^{1/2}\Bigg).
\end{equation}
Equations \eqref{xiComparableDelta} and
\eqref{intersectionContainedInBall} are Equations (26) and (27) in
\cite{Wolff3}.
\end{property}
\begin{property}[Appolonius-type bounds]\label{appolonius} Let $t>C\delta$. Fix three $\Phi$--circles
$\Gamma_1,\Gamma_2,\Gamma_3$, let $B_0=B(b,C^{-2}\alpha)$, and let
\begin{equation}\label{YDefn}
\begin{split}
Y=\Big\{\Gamma\colon &\Delta_{B(b,C^{-1}\alpha)}(\Gamma,\Gamma_i)<C_1\delta,\ i=1,2,3;\\
&d(\Gamma\cap B_0,\Gamma_i\cap B_0)>t,\ i=1,2,3;\\
&\Gamma^\delta\cap\Gamma_i^\delta\cap B_0\neq\emptyset,\ i=1,2,3;\\
&\dist(\Gamma^{C_1\delta}\cap\Gamma_i^{C_1\delta}B_0, \Gamma^\delta\cap\Gamma_j^\delta\cap B_0)>C_3\sqrt{\delta/t},\ i\neq j\Big\}.
\end{split}
\end{equation}
Informally, $Y$ is the collection of curves that are almost
tangent to each of the curves $\Gamma_1,\Gamma_2,\Gamma_3,$ with
the additional requirement that the three regions of
almost-tangency not be too close to each other.

If we identify $\Phi$--circles $\Gamma$ with points
$(x_0,r_0)\in\RR^3$ then
\begin{equation}\label{boundOnYDiameter}
Y\ \textrm{is the union of two sets, each of diameter}\ \lesssim
t.
\end{equation}
This is is Lemma 3.1(ii) in \cite{Wolff3}.
\end{property}
\begin{property} For three fixed curves $\Gamma_1,\Gamma_2,\Gamma_3$, and a
given curve $\Gamma=\Gamma(x_0,r_0)$, we say that $\Phi$ is
\emph{$\Gamma$--adapted} if there exists points $a_1,a_2,a_3,$
with $a_j\in \Gamma_j$ such that
\begin{equation*}
|a_j-\xi_j(x_0)|\leq C^{-1}\sqrt{\delta/t},
\end{equation*}
and
\begin{align*}
\Phi(x,a_1)&=0,\\
\nabla_x\Phi(x,a_2)&=(e\cdot(a_2-a_1))\beta
\end{align*}
for all $x$, where $e$ is a unit tangent vector to $\Gamma_1$ at
$a_1$, $\beta$ is a vector independent of $y$ with $|\beta|\sim
1$, and
\begin{equation*}
\xi_i(x_0)=\xi(x_i,r_i,x_0).
\end{equation*}
\begin{remark}
Informally, the notion of a $\Gamma$--adapted defining function is
a way of getting around the problem that we are forced to work
with a defining function $\Phi$, but we are actually interested in
its level sets $\{\Phi(x,\cdot)=r\}$. Thus we are free (within
certain constraints to be dealt with below) to modify $\Phi$
provided that our new defining function has the same level sets as
the old one. Choosing a $\Gamma$--adapted defining function
(provided a suitable one exists) simplifies many of the
technicalities in our estimates.
\end{remark}
Lemma 3.6 in \cite{Wolff3} tells us that if $\Gamma\in Y$ then by
pre-composing $\Phi$ with suitable diffeomorphisms, a
$\Gamma$--adapted defining function $\Phi$ exists which satisfies
uniform derivative bounds, and this function $\Phi$ has the same
level sets as our original $\Phi$ (i.e.~it gives rise to the same
$\Phi$--circles), so the corresponding maximal functions are
identical (the adapted defining function may not be algebraic, but
this will not affect our analysis).

Now, if $\Phi$ is $\Gamma$--adapted, define
\begin{equation}\label{defnOfG}
T(x) = \left(\begin{array}{cc} %
\nabla_x\Phi(x,\xi_1(x))&-1\\
\nabla_x\Phi(x,\xi_2(x))&-1\\
\nabla_x\Phi(x,\xi_3(x))&-1
\end{array}\right).
\end{equation}
Informally, if we fix a choice of $\Gamma$ and select a defining
function adapted to $\Gamma$, then for $x$ in a neighborhood of
$x_0$, $T(x)$ describes how changing $x$ affects how close
$\Gamma(x,r_0)$ is to being tangent with each of
$\Gamma_1,\Gamma_2,\Gamma_3$.

Lemma 3.8 in \cite{Wolff3} tells us that when restricted to each
connected component of $Y$ (individually), $T$ is boundedly
conjugate to its linear part, i.e.~if $\Gamma$, and $\tilde
\Gamma$ lie in the same connected component of $Y$, then
\begin{equation}\label{KolasaLemma38}
T(x_0)T(\tilde x_0)^{-1}=I+E(\tilde x_0),
\end{equation}
where (say) $\norm{E(\tilde x_0)}<1/100$. Furthermore, for the
same choice of $\Gamma,\tilde\Gamma$,
\begin{equation}\label{xiCannotWander}
|\xi_1(\tilde x_0)-\xi_1(x_0)|\lesssim\sqrt{\delta/t}.
\end{equation}
Equation \eqref{xiCannotWander} is a consequence of Equation (45) in \cite{Wolff3} once we note that if $\tilde\Gamma\in Y$ is in the same connected component as $\Gamma\in Y$, then since $T$ is boundedly conjugate to its linear part, $|T(x_0)(\tilde x_0-x_0,\tilde r_0-r_0)|<C\delta.$
\end{property}
\begin{property}[Bounds on intersection area]\label{intersectionSizeBounds} Let $\Gamma,\tilde\Gamma$ be $\Phi$
circles. Then
\begin{equation}\label{areaControl}
|\Gamma^\delta\cap\tilde\Gamma^\delta\cap
B(b,C^{-2}\alpha)|\lesssim\frac{\delta^2}{\big(d(\Gamma,\tilde\Gamma)+\delta\big)^{1/2}\big(\Delta_{B(b,C^{-1}\alpha)}(\Gamma,\tilde\Gamma)+\delta\big)^{1/2}},
\end{equation}
\begin{equation}\label{diameterControl}
\textrm{diam}(\Gamma^\delta\cap\tilde\Gamma^\delta\cap
B(b,C^{-2}\alpha))\lesssim
\Big(\frac{\Delta_{B(b,C^{-1}\alpha)}(\Gamma,\tilde\Gamma)+\delta}{d(\Gamma,\tilde\Gamma)+\delta}\Big)^{1/2}.
\end{equation}
This is Lemma 3.1(i) in \cite{Wolff3}.
\end{property}
As noted above, when $\Phi(x,y)=|x-y|$, then
$S_{\Gamma,B(b,\alpha)}$ is a section of the right-angled
light-cone with focus at $(x_0,r_0)$. We shall establish several
lemmas that show that certain key properties of light cones are
preserved when we consider the set $S_{\Gamma,B(b,\alpha)}$ for
$\Phi$ a general defining function satisfying the requirements
from Theorem \ref{theoremOne}.
\begin{lemma}\label{lemmaDistCompDelta}
Let $\Gamma,\tilde\Gamma$ be $\Phi$--circles with
\begin{equation}\label{centersFarApart}
\Delta_{B(b,C^{-1}\alpha)}(\Gamma,\tilde\Gamma)<{C^\prime}^{-1}|x_0-\tilde
x_0|.
\end{equation}
Then there exists $\Gamma^\prime$ with $x_0^\prime=\tilde x_0,\
|r_0^\prime-\tilde r_0|\lesssim
\Delta_{B(b,C^{-1}\alpha)}(\Gamma,\tilde\Gamma)$ such that
\begin{equation}\label{surfaceVerticalDistance}
\Gamma^\prime\in S_{\Gamma,B(b,\alpha)}.
\end{equation}
Furthermore,
\begin{equation}\label{distanceControlsDelta}
\Delta_{B(b,\alpha)}(\Gamma,\tilde\Gamma)\lesssim
\dist(S_{\Gamma,B(b,\alpha)},\tilde\Gamma)\lesssim
\Delta_{B(b,C^{-1}\alpha)}(\Gamma,\tilde\Gamma).
\end{equation}
\end{lemma}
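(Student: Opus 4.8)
The strategy is to reduce everything to the differential-geometric picture provided by Properties \ref{straighteningOut}--\ref{tangencyPoint}, exploiting the containment relations \eqref{surfaceContainmentOne}--\eqref{surfaceContainmentTwo} to pass between the algebraically-defined surface $S_{\Gamma,B(b,\alpha)}$ and the geometric ``incidence locus'' $\{\Gamma'\colon \Delta_{B(b,\alpha)}(\Gamma,\Gamma')=0\}$. First I would use Property \ref{tangencyPoint}: the hypothesis \eqref{centersFarApart} is exactly the smallness condition $\Delta_{B(b,C^{-1}\alpha)}(\Gamma,\tilde\Gamma)\leq (C')^{-1}|x_0-\tilde x_0|$ needed to produce the unique point $\xi=\xi(x_0,r_0,\tilde x_0)\in\Gamma\cap B(0,\alpha)$ at which $\nabla_y\Phi(x_0,\xi)\wedge\nabla_y\Phi(\tilde x_0,\xi)=0$. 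To construct $\Gamma'$, set $x_0'=\tilde x_0$ and choose $r_0'$ so that $\Phi(\tilde x_0,\xi)=r_0'$; then by \eqref{xiDefn} the curve $\Gamma'=\Gamma(\tilde x_0,r_0')$ passes through $\xi$ with normal parallel to that of $\Gamma$ at $\xi$, so $\Delta_{B(b,\alpha)}(\Gamma,\Gamma')=0$, i.e.~$\Gamma'\in\{\Delta_{B(b,\alpha)}(\Gamma,\cdot)=0\}$. By \eqref{surfaceContainmentTwo} this places $\Gamma'$ within $C^{-1}\delta$ of $S_{\Gamma,B(b,\alpha)}$; one then slides $r_0'$ by at most $C^{-1}\delta$ (staying at $x_0'=\tilde x_0$, since $S_{\Gamma,B(b,\alpha)}$ projects onto an open set in the $x$-variable near $\tilde x_0$) to land genuinely on $S_{\Gamma,B(b,\alpha)}$, giving \eqref{surfaceVerticalDistance}. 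The size bound $|r_0'-\tilde r_0|\lesssim \Delta_{B(b,C^{-1}\alpha)}(\Gamma,\tilde\Gamma)$ is then precisely \eqref{xiComparableDelta}: $|r_0'-\tilde r_0|=|\Phi(\tilde x_0,\xi)-\tilde r_0|\lesssim\Delta_{B(b,C^{-1}\alpha)}(\Gamma,\tilde\Gamma)$, plus the negligible $O(\delta)$ adjustment, which is absorbed since we may assume $\Delta\gtrsim\delta$ (otherwise the conclusion is trivial).

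For the two-sided estimate \eqref{distanceControlsDelta}, the right-hand inequality $\dist(S_{\Gamma,B(b,\alpha)},\tilde\Gamma)\lesssim\Delta_{B(b,C^{-1}\alpha)}(\Gamma,\tilde\Gamma)$ follows from the first part: $\Gamma'$ lies on $S_{\Gamma,B(b,\alpha)}$ and $d(\Gamma',\tilde\Gamma)=|r_0'-\tilde r_0|\lesssim\Delta_{B(b,C^{-1}\alpha)}(\Gamma,\tilde\Gamma)$. For the left-hand inequality $\Delta_{B(b,\alpha)}(\Gamma,\tilde\Gamma)\lesssim\dist(S_{\Gamma,B(b,\alpha)},\tilde\Gamma)$, I would argue contrapositively/quantitatively: pick any $\Gamma''\in S_{\Gamma,B(b,\alpha)}$ realizing (up to a constant) the distance to $\tilde\Gamma$; by \eqref{surfaceContainmentOne}, $\Gamma''$ is within $C^{-1}\delta$ of a curve $\Gamma_\ast$ with $\Delta_{B(b,\alpha)}(\Gamma,\Gamma_\ast)=0$, so there is a common point $y_\ast$ with parallel normals. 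Then estimate $\Delta_{B(b,\alpha)}(\Gamma,\tilde\Gamma)$ by using $y_\ast$ (or rather the point of $\Gamma$ and the point of $\tilde\Gamma$ nearest to it) as a test pair in the infimum defining $\Delta_{B(b,\alpha)}$: the displacement in base point and the angle between normals are each controlled by $d(\Gamma_\ast,\tilde\Gamma)\leq d(\Gamma_\ast,\Gamma'')+d(\Gamma'',\tilde\Gamma)\lesssim\delta+\dist(S_{\Gamma,B(b,\alpha)},\tilde\Gamma)$, using the uniform derivative bounds of Property \ref{cinematicDerivative} (specifically \eqref{equivalentCinematicCurvatureConditionTwo}) to convert a change in $(x,r)$-parameters into a comparable change in position/normal along the level curve. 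Again the stray $\delta$ is harmless because $\dist(S_{\Gamma,B(b,\alpha)},\tilde\Gamma)\gtrsim\delta$ whenever $\Delta$ is (and otherwise there is nothing to prove).

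The main obstacle I anticipate is the bookkeeping around the three balls $B(b,\alpha)$, $B(b,C^{-1}\alpha)$, $B(b,C^{-2}\alpha)$ appearing with different radii in $S_{\Gamma,\cdot}$, in $\Delta_{\cdot}$, and in Properties \ref{tangencyPoint}--\ref{intersectionSizeBounds}: one must check that the point $\xi$ produced by Property \ref{tangencyPoint} actually lies in the smaller ball where the relevant estimates are valid, and conversely that the near-tangency point extracted from a curve on $S_{\Gamma,B(b,\alpha)}$ lands in $B(b,\alpha)$ rather than escaping. The monotonicity $\Delta_{B(b,C^{-1}\alpha)}\geq\Delta_{B(b,\alpha)}$ and the finite-family-of-translates device recorded just after the definition of $\Delta_X$ are what make these radius juggles go through, at the cost of only bounded constants; the curvature lower bound (uniform nonvanishing of the determinant in \eqref{cinematicCurvatureCondition}) guarantees the quadratic localization \eqref{intersectionContainedInBall}, which is what prevents the incidence locus from being degenerate and keeps all the implied constants uniform.
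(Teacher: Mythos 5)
Your proposal is correct and follows essentially the same route as the paper: both use Property \ref{tangencyPoint} to produce the tangency point $\xi$ and set $x_0'=\tilde x_0$, $r_0'=\Phi(\tilde x_0,\xi)$, both invoke \eqref{xiComparableDelta} for the bound $|r_0'-\tilde r_0|\lesssim\Delta_{B(b,C^{-1}\alpha)}(\Gamma,\tilde\Gamma)$, and both derive the left-hand inequality of \eqref{distanceControlsDelta} from the Lipschitz continuity of $\Delta_{B(b,\alpha)}(\Gamma,\cdot)$ and its vanishing on the $w=0$ incidence locus, reconciled with $S_{\Gamma,B(b,\alpha)}$ via \eqref{surfaceContainmentOne}--\eqref{surfaceContainmentTwo}. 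Your proof is somewhat more explicit than the paper's in two places -- it spells out the $O(\delta)$ slide from the $w=0$ locus onto the genuine surface $S_{\Gamma,B(b,\alpha),w}$, and it writes out the test-pair argument behind the paper's terse appeal to joint smoothness -- but these are expository refinements of the same argument rather than a different method.
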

\begin{remark}
Note that we have to use different sets $X$ in the subscript of
$\Delta$ on the right and left sides of
\eqref{distanceControlsDelta}. In the case where $\Phi(x,y)=|x-y|$
(and thus we can define $\Phi$ over (say) a large dilate of the
unit circle),
\begin{equation*}
\Delta_{(B(0,100))}(\Gamma,\tilde\Gamma)=\big||x_0-\tilde
x_0|-|r_0-\tilde r_0|\big|,
\end{equation*}
provided $\Gamma,\tilde\Gamma$ lie in suitably restricted sets,
and if two circles are nearly incident, we can always change one
of them slightly so that they are exactly incident. In the more
general case we are considering, however, it may not always be
possible to make two almost-incident curves exactly incident by
changing one of them slightly; it is possible that when we try to
move one of the curves to make the two curves incident, the
``point of incidence'' occurs outside the domain of definition of
$\Phi$ (and thus there is no point of incidence). Thus, we need to
be more careful about how we define incidence and
almost-incidence. This consideration will occur frequently in the
lemmas below, and it will significantly lengthen our analysis.
\end{remark}
\begin{proof}
By \eqref{surfaceContainmentOne} and
\eqref{surfaceContainmentTwo}, in order to obtain
\eqref{distanceControlsDelta}, it suffices to establish the
estimate
\begin{equation}\label{distanceControlsDeltaModified}
\Delta_{B(b,\alpha)}(\Gamma,\tilde\Gamma)\lesssim
\dist(S_{\Gamma,B(b,\alpha),0},\tilde\Gamma)\lesssim
\Delta_{B(b,C^{-1}\alpha)}(\Gamma,\tilde\Gamma).
\end{equation}

First, note that $\Delta_{B(b,\alpha)}(\cdot,\cdot)$ is jointly
smooth in both variables with uniformly bounded derivatives. Since
$\Delta_{B(b,\alpha)}(\Gamma,\tilde\Gamma)=0$ for $\tilde\Gamma\in
S_{\Gamma,B(b,\alpha)}$, we immediately obtain the first
inequality in \eqref{distanceControlsDeltaModified}. The second
inequality in \eqref{distanceControlsDeltaModified} follows from
\eqref{surfaceVerticalDistance}, which we shall now prove.

Straighten out $\Phi$ around $\tilde x_0$. From Property
\ref{tangencyPoint} of $\Phi$, there exists $\xi\in
B(b,\alpha)\cap\Gamma$ such that
\begin{equation}\label{curvesHaveSameSlope}
\nabla_y\Phi(x_0,\xi)\wedge\nabla_y\Phi(\tilde x_0,\xi)=0,
\end{equation}
i.e.~(in straightened out coordinates)
\begin{equation*}
\frac{\nabla_y\Phi(x_0,\xi)}{|\nabla_y\Phi(x_0,\xi)|}=(\pm 1,0),
\end{equation*}
and
\begin{equation*}
|\Phi(\tilde x,\xi)-\tilde r_0|\lesssim
\Delta_{B(b,C^{-1}\alpha)}(\Gamma,\tilde\Gamma),
\end{equation*}
where here and below the implicit constants are uniform in the
choice of $\Gamma,\tilde\Gamma$ provided \eqref{centersFarApart}
is satisfied uniformly. Thus if we select $x_0^\prime=\tilde x_0,\
r_0^\prime= \tilde r_0 + \Phi(\tilde x_0,\xi)$ then $\xi$ lies on
$\Gamma^\prime,$ which establishes
\eqref{surfaceVerticalDistance}.
\end{proof}
\begin{corollary}\label{coneHasDimTwo}
$S_{\Gamma,B(b,\alpha)}$ is a smooth manifold and
$\dim(S_{\Gamma,B(b,\alpha)})\geq 2$.
\end{corollary}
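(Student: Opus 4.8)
The plan is to prove the slightly stronger statement that near each of its points $S_{\Gamma,B(b,\alpha)}$ is the graph of a smooth function of $x$ over an open subset of $\RR^2$; this yields simultaneously that $S_{\Gamma,B(b,\alpha)}$ is a smooth embedded $2$--manifold and that $\dim S_{\Gamma,B(b,\alpha)}\geq 2$ (combined with the bound $\dim S_{\Gamma,B(b,\alpha)}\leq 2$ already obtained in Lemma~\ref{coneIsSemiAlgebraic}, the dimension is then exactly $2$). Nonemptiness of $S_{\Gamma,B(b,\alpha)}$ causes no trouble: Lemma~\ref{lemmaDistCompDelta}, applied to any $\tilde\Gamma$ with $|x_0-\tilde x_0|$ comparable to $\alpha$ and with $\tilde\Gamma$ nearly tangent to $\Gamma$, exhibits an element of $S_{\Gamma,B(b,\alpha)}$.

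First I would fix a point $(x^\ast,r^\ast)\in S_{\Gamma,B(b,\alpha)}$ and a lift $(x^\ast,r^\ast,y^\ast)\in V_{\Gamma,B(b,\alpha),w}$, so that in particular $|x^\ast-x_0|>C\delta$ with $C$ the constant appearing in the definition of $S$. The assertion is invariant under the straightening diffeomorphism of Property~\ref{straighteningOut} about $x_0$, which only reparametrizes $y$, shifts $r$ by a fixed constant, and replaces $w_3$ by a quantity of the same size $O(\delta)$, so it carries $S_{\Gamma,B(b,\alpha)}$ onto an ambient translate of itself in $\RR^3$. I would therefore straighten $\Phi$ about $x_0$, after which $\Phi(x_0,y)=y^{(2)}$, so $\nabla_y\Phi(x_0,y)\equiv(0,1)$ and $\nabla_y\Phi(x_0,y)\wedge\nabla_y\Phi(x,y)=-\partial_{y^{(1)}}\Phi(x,y)$. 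Writing $G(x,r,y)$ for the triple of (straightened) defining functions of $V_{\Gamma,B(b,\alpha),w}$ appearing in \eqref{defnOfV}, I would compute the Jacobian of $G$ with respect to the variables $(r,y)$ at $(x^\ast,r^\ast,y^\ast)$: since $r$ occurs only in the component $\Phi(x,y)-r-w_2$, with coefficient $-1$, and the component $\Phi(x_0,y)-r_0-w_1$ is a function of $y^{(2)}$ alone, the resulting $3\times 3$ determinant collapses to $\pm\,\partial^2_{y^{(1)}}\Phi(x^\ast,y^\ast)$. If this quantity is nonzero, the implicit function theorem presents $V_{\Gamma,B(b,\alpha),w}=G^{-1}(0)$, near $(x^\ast,r^\ast,y^\ast)$, as the graph of a smooth map $x\mapsto(r(x),y(x))$ on a neighborhood of $x^\ast$ in $\RR^2$; applying $\pi_{(x,r)}$ then shows that $S_{\Gamma,B(b,\alpha)}$ agrees near $(x^\ast,r^\ast)$ with the graph of the smooth function $r(\cdot)$, which is exactly the local picture we wanted.

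The heart of the matter, and the step I expect to be the main obstacle, is showing $\partial^2_{y^{(1)}}\Phi(x^\ast,y^\ast)\neq 0$; this is precisely where the cinematic curvature hypothesis must be used. On $V_{\Gamma,B(b,\alpha),w}$ the third defining equation reads $-\partial_{y^{(1)}}\Phi(x^\ast,y^\ast)=w_3$, so $|\partial_{y^{(1)}}\Phi(x^\ast,y^\ast)|<C^{-1}\delta$, while the straightened derivative bound \eqref{equivalentCinematicCurvatureConditionOne} of Property~\ref{cinematicDerivative} gives $|\partial_{y^{(1)}}\Phi(x^\ast,y^\ast)|+|\partial^2_{y^{(1)}}\Phi(x^\ast,y^\ast)|\gtrsim|x^\ast-x_0|>C\delta$ with a uniform implicit constant. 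Choosing $C$ in the definition of $S$ large enough in terms of that constant forces the curvature term $\partial^2_{y^{(1)}}\Phi(x^\ast,y^\ast)$ to carry the lower bound, hence to be nonzero. What remains is routine: one checks that straightening does not spoil the uniformity of the constants in Property~\ref{cinematicDerivative} (cf.\ \eqref{continuityofDiffeo}), and that the neighborhood on which the implicit function theorem is applied can be shrunk so as to lie inside the admissible region $\{x\in U_1,\ r\in(1-\tau,1),\ y\in B(b,\alpha),\ |x-x_0|>C\delta\}$, which is automatic because this region is open and contains the base point.
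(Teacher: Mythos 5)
Your proof is correct, but it proceeds by a different route from the paper's. The paper's one-line argument cites Lemma~\ref{lemmaDistCompDelta} (which itself rests on Property~\ref{tangencyPoint}, the existence and behavior of the unique tangency point~$\xi$): given any $(\tilde x_0,\tilde r_0)\in S_{\Gamma,B(b,\alpha)}$, one varies $\tilde x_0$ over a small disk and for each center picks the radius $r'(\tilde x_0)$ produced by that lemma, getting a graph $\tilde x_0\mapsto(\tilde x_0,r'(\tilde x_0))$ sitting inside $S_{\Gamma,B(b,\alpha)}$. You instead apply the implicit function theorem directly to the three defining equations of $V_{\Gamma,B(b,\alpha),w}$ and invoke Property~\ref{cinematicDerivative} (the straightened derivative bounds) rather than Property~\ref{tangencyPoint}. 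Your approach has the merit of making the smooth-manifold assertion explicit: the paper's sketch only produces a $2$--parameter family of points in $S$ and leaves smoothness implicit in the construction of $r'$, whereas your IFT argument yields the local graph structure outright, and the Jacobian computation neatly isolates where $|x-x_0|>C\delta$ is needed.

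One small imprecision worth flagging. After straightening, the third defining constraint does not transform simply by ``replacing $w_3$ with another $O(\delta)$ constant.'' One has
\begin{equation*}
\nabla_y\Phi(x_0,\psi_{x_0}(z))\wedge\nabla_y\Phi(x,\psi_{x_0}(z))
=\frac{1}{\det D\psi_{x_0}(z)}\bigl(\nabla_z\Phi'(x_0,z)\wedge\nabla_z\Phi'(x,z)\bigr)
=\frac{-\partial_{z^{(1)}}\Phi'(x,z)}{\det D\psi_{x_0}(z)},
\end{equation*}
so the constraint picks up a $z$--dependent factor $\det D\psi_{x_0}(z)$, and $S_{\Gamma,B(b,\alpha)}$ is \emph{not} carried to an ambient translate of itself: it becomes the projection of a set cut out by slightly different equations. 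Consequently, differentiating the straightened third equation contributes an extra term proportional to $\partial_{z^{(1)}}\bigl(\det D\psi_{x_0}(z)\bigr)\cdot\partial_{z^{(1)}}\Phi'(x^\ast,z^\ast)$, and the $3\times3$ determinant is $\pm\partial^2_{z^{(1)}}\Phi'(x^\ast,z^\ast)$ only up to a bounded multiplicative factor plus an additive $O(\delta)$ error (because $|\partial_{z^{(1)}}\Phi'(x^\ast,z^\ast)|\lesssim\delta$ on $V$). This error is harmless: by \eqref{equivalentCinematicCurvatureConditionOne} one still has $|\partial^2_{z^{(1)}}\Phi'(x^\ast,z^\ast)|\gtrsim |x^\ast-x_0|-O(\delta)>(C-O(1))\delta$, which dominates the $O(\delta)$ correction once $C$ in the definition of $S_{\Gamma,X}$ is taken large. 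So the determinant is still nonvanishing and the IFT step goes through, but the clean way to justify the reduction is that a diffeomorphic change of the $(r,y)$ variables multiplies the relevant Jacobian by $\det D\Psi$, which is bounded away from zero, rather than that $S$ literally translates.
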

\begin{proof}
Let $(\tilde x_0, \tilde r_0)\in S_{\Gamma,B(b,\alpha)}$. Then for $C$ sufficiently large, $B(0,1/C)$ embeds into $S_{\Gamma,B(b,\alpha)}$ in a neighborhood of $(\tilde x_0,\tilde r_0)$ via the embedding $(x,r)\mapsto(x+\tilde x_0, r^\prime)$, where $r^\prime$ is as described in Lemma \ref{lemmaDistCompDelta}.
\end{proof}
\begin{corollary}\label{CylinderCorollary}
There exists $C_0$ such that for all $\Phi$--circles $\Gamma,$ all
$(x,r)\in S_{\Gamma,B(b,\alpha)}$, and all $t<C^{-1}|x-x_0|$,
\begin{equation}
\pi_x \big(S_{\Gamma,B(b,\alpha)}\cap\{(x^\prime,r^\prime)\colon
|x-x^\prime|<t, |r-r^\prime|<C_0t\}\big)=\{x^\prime\colon
|x-x^\prime|<t\},
\end{equation}
i.e.~the cylindrical section centered at $(x,r)\in
S_{\Gamma,B(b,\alpha)}$ of radius $t$ and height $Ct$ contains all
of (or possibly all of one of the sheets of)
$S_{\Gamma,B(b,\alpha)}$ confined to the corresponding truncated
cylinder.
\end{corollary}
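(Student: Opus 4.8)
The plan is to show that, inside the truncated cylinder centered at $(x,r)$, the set $S_{\Gamma,B(b,\alpha)}$ is a single--valued graph $r'=g(x')$ over center space whose slope is bounded by a constant $C_0$ depending only on $\Phi$; the corollary is then immediate, since for every $x'$ with $|x-x'|<t$ the point $(x',g(x'))$ lies in $S_{\Gamma,B(b,\alpha)}$ and satisfies $|g(x')-r|=|g(x')-g(x)|\leq C_0|x-x'|<C_0t$, while the reverse inclusion in the claimed set equality is trivial. This is a quantitative, uniform--in--$\Gamma$ strengthening of Corollary \ref{coneHasDimTwo}; one could also obtain it by iterating Lemma \ref{lemmaDistCompDelta}, but the explicit graph description below is cleaner.

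To produce $g$, I would unwind the definition of $S_{\Gamma,B(b,\alpha)}=S_{\Gamma,B(b,\alpha),w}$ for the fixed $w$ with $0\leq w_i<C^{-1}\delta$ supplied by Lemma \ref{coneIsSemiAlgebraic}. A point $(x',r')$ lies in it precisely when there is $y\in B(b,\alpha)$ with $\Phi(x_0,y)-r_0=w_1$, $\Phi(x',y)-r'=w_2$, and $\nabla_y\Phi(x_0,y)\wedge\nabla_y\Phi(x',y)=w_3$, subject to $|x'-x_0|>C\delta$. For a given $x'$, the first and third equations force $y$ to be the ($w$--perturbed analogue of the) point of parallel normals $\xi(x_0,r_0,x')$ of Property \ref{tangencyPoint}; that point is unique by Property \ref{tangencyPoint} and survives the $O(\delta)$ perturbation, so $y=\xi_w(x_0,r_0,x')$ is determined, and hence $r'=\Phi(x',\xi_w(x_0,r_0,x'))-w_2=:g(x')$ is determined. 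Thus, on the component through $(x,r)$, $S_{\Gamma,B(b,\alpha)}$ is exactly the graph of $g$.

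It remains to check that $g$ is defined on all of $B(x,t)$ and that $|\nabla g|\lesssim 1$ uniformly. Since $(x,r)\in S_{\Gamma,B(b,\alpha)}$ we have $|x-x_0|>C\delta$, so for $|x'-x|<t<C^{-1}|x-x_0|$ we get $|x'-x_0|>(1-C^{-1})|x-x_0|$, which keeps $x'$ uniformly away from the vertex $\{x'=x_0\}$ throughout $B(x,t)$. Consequently the derivative bounds of Property \ref{cinematicDerivative} (in particular \eqref{equivalentCinematicCurvatureConditionOne}) make the Jacobian in $y$ of $\big(\Phi(x_0,\cdot)-r_0-w_1,\ \nabla_y\Phi(x_0,\cdot)\wedge\nabla_y\Phi(x',\cdot)-w_3\big)$ of size $\sim|x'-x_0|$ at $\xi_w$; the implicit function theorem then defines $\xi_w(x_0,r_0,x')$ smoothly and uniquely over $B(x,t)$, with $|D_{x'}\xi_w|\lesssim|x'-x_0|^{-1}$, and Property \ref{tangencyPoint} ensures it remains the genuine point of parallel normals on $\Gamma\subset B(b,\alpha)$. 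For the gradient of $g$, differentiate $g(x')=\Phi(x',\xi_w(x'))-w_2$: the identity $\Phi(x_0,\xi_w(x'))\equiv r_0+w_1$ shows every column of $D_{x'}\xi_w$ is orthogonal to $\nabla_y\Phi(x_0,\xi_w)$, while $\nabla_y\Phi(x_0,\xi_w)\wedge\nabla_y\Phi(x',\xi_w)=w_3$ together with $|\nabla_y\Phi|\sim1$ forces $\nabla_y\Phi(x',\xi_w)$ to agree with $\pm\nabla_y\Phi(x_0,\xi_w)$ up to a vector of length $O(\delta)$. Hence
\begin{equation*}
\nabla g=\nabla_x\Phi(x',\xi_w)+\nabla_y\Phi(x',\xi_w)\cdot D_{x'}\xi_w=\nabla_x\Phi(x',\xi_w)+O\!\big(\delta/|x'-x_0|\big)=\nabla_x\Phi(x',\xi_w)+O(1/C),
\end{equation*}
using $|x'-x_0|>(1-C^{-1})C\delta$. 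Since $|\nabla_x\Phi|$ is bounded uniformly on the relevant compact set, one may take $C_0$ to be that bound (enlarged slightly to absorb the $O(1/C)$ term), which finishes the argument.

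The one delicate point — and the place where the hypothesis $t<C^{-1}|x-x_0|$, together with $|x-x_0|>C\delta$, is genuinely used — is the claim in the third paragraph that the tangent--point parametrization, and with it the uniform constants of Properties \ref{tangencyPoint} and \ref{cinematicDerivative}, persist over the \emph{entire} disk $B(x,t)$ rather than merely near $(x,r)$: one must verify that $\xi_w(x_0,r_0,x')$ neither escapes $B(b,\alpha)$ nor fails to be the unique point of parallel normals as $x'$ ranges over $B(x,t)$, and that the single choice of $w$ (made before $x'$ is fixed, exactly as in Lemma \ref{coneIsSemiAlgebraic}) does not disturb these estimates. The parenthetical ``or one of the sheets'' in the statement merely allows for $S_{\Gamma,B(b,\alpha)}$ having several connected components that meet the cylinder; the argument is run on the component containing $(x,r)$.
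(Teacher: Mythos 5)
The paper states Corollary \ref{CylinderCorollary} without proof, positioning it right after Corollary \ref{coneHasDimTwo} as a further consequence of Lemma \ref{lemmaDistCompDelta}; the implied route is to apply Lemma \ref{lemmaDistCompDelta} to the comparison circle $\tilde\Gamma=\Gamma(x',r)$ for each $x'\in B(x,t)$, using the tangency point $\xi$ from Property \ref{tangencyPoint} to pick out $r'$. Your proof does essentially the same thing, but makes the implicit content explicit: you parametrize the sheet of $S_{\Gamma,B(b,\alpha)}$ through $(x,r)$ as a graph $r'=g(x')$ via the ($w$--perturbed) parallel--normal point $\xi_w(x_0,r_0,x')$, and you show $|\nabla g|\lesssim 1$ by observing that the tangential derivative $\nabla_y\Phi(x',\xi_w)\cdot D_{x'}\xi_w$ is small because $D_{x'}\xi_w$ is annihilated by $\nabla_y\Phi(x_0,\xi_w)$, to which $\nabla_y\Phi(x',\xi_w)$ is nearly proportional. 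This is the right approach, and it is genuinely needed here: a naive application of Lemma \ref{lemmaDistCompDelta} yields only $|r'-r|\lesssim\Delta(\Gamma,\Gamma(x',r))\lesssim\delta+t$, which is not $\lesssim t$ when $t\ll\delta$, whereas the Lipschitz--graph form gives $|r'-r|\leq C_0|x'-x|<C_0 t$ unconditionally. Since the corollary is invoked in Lemma \ref{cellDecompLemma3} with $t$ on the order of $\delta$, this refinement is not cosmetic.

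Two small comments. First, a cosmetic inaccuracy: the wedge constraint forces $\nabla_y\Phi(x',\xi_w)=a\,\nabla_y\Phi(x_0,\xi_w)+O(\delta)$ for some scalar $a$ with $|a|\sim 1$, not for $a=\pm 1$; this changes nothing since the $a$--component is still killed by $D_{x'}\xi_w$. Second, you correctly flag the ``delicate point'' — that the implicit--function--theorem parametrization, the uniqueness in Property \ref{tangencyPoint}, and the estimate $|D_{x'}\xi_w|\lesssim|x'-x_0|^{-1}$ must persist uniformly over all of $B(x,t)$ — and you identify the ingredients that handle it (the bound $|x'-x_0|>(1-C^{-1})|x-x_0|>(1-C^{-1})C\delta$ keeps the Jacobian determinant $\sim|x'-x_0|$ bounded below, and integrating $|D_{x'}\xi_w|\lesssim|x'-x_0|^{-1}$ over $B(x,t)$ shows $\xi_w$ drifts by $O(t/|x-x_0|)=O(C^{-1})$, so it stays in the domain). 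This is precisely the point the paper glosses over by omitting the proof, so you are in good company; but since you are writing out a proof, it would be cleaner to state those two estimates rather than merely gesture at them.
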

\section{Counting incidences between bipartite pairs of curve families}
\label{sectionCountingIncidences} %
Recall the definition of a $t$--bipartite pair $\WB$, a
$\dt$--rectangle, and a rectangle of type
$(\gtrsim\mu,\gtrsim\nu)$ relative to $\WB$ (Definition
\ref{defnOfARectangle}).
\begin{definition}
We shall say that a $\dt$ rectangle $R$ is of type
$(\sim\mu,\sim\nu)$ if it is of type $(\gtrsim\mu,\gtrsim\nu)$,
but is neither of type $(\gtrsim C\mu,\gtrsim\nu)$ nor $(\gtrsim
\mu,\gtrsim C\nu)$ for some absolute constant $C$ which shall be
determined later.
\end{definition}
\begin{definition}
We say that two $\dt$--rectangles are \emph{close} if
there is a $(2\delta,t)$ rectangle containing both of them. We say that two $\dt$--rectangles are
\emph{comparable} if there is a $(C_0\delta,t)$--rectangle
containing both of them.
\end{definition}
For $\WB$ a $t$--bipartite pair with $t>C\delta$ and $X$ a set,
define
\begin{align*}
\mathcal{I}_{X}&=\{ (\Gamma,\tilde\Gamma)\in\WB\colon
\Delta_X(\Gamma,\tilde\Gamma)<\delta\},\\
\tilde{\mathcal{I}}_{X}&=\{ (\Gamma,\tilde\Gamma)\in\WB\colon
\Delta_X(\Gamma,\tilde\Gamma)<C\delta\},
\end{align*}
for some constant $C$ to be determined later, where we recall that
$\Delta_X$ is defined in \eqref{defOfDeltaX}.

We shall state and prove a series of lemmas that are analogous to
Lemmas 1.5--1.16 in \cite{Wolff4}. If the proof of a lemma is the
same as that of the corresponding lemma in \cite{Wolff4} we shall
omit it. Throughout the discussion below, $\WB$ is a
$t$--bipartite pair with $\#\WC=m,\ \#\BC=n$.
\begin{lemma}\label{lemma15}$\phantom{1}$
\begin{enumerate}[label=(\roman{*}), ref=(\roman{*})]
\item\label{lemma15Item1} If
$\Delta_{B(b,C^{-1}\alpha)}(\Gamma,\tilde\Gamma)<\delta$, then
there exists a $\dt$--rectangle $R\subset B(b,\alpha)$ such
that $\Gamma$ and $\tilde\Gamma$ are tangent to any
$\dt$--rectangle close to $R$.
\item\label{lemma15Item2} Conversely, if $\Gamma,\tilde\Gamma$ are
tangent to a common $\dt$--rectangle $R\in B(b,\alpha)$
then $\Delta_{B(b,\alpha)}(\Gamma,\tilde\Gamma)\leq C\delta$, and
if $\Gamma,\tilde\Gamma$ are tangent to comparable
$\dt$--rectangles $R,R^\prime\in B(b,\alpha)$ then
$\Delta_{B(b,\alpha)}(\Gamma,\tilde\Gamma)\lesssim\delta$.
\end{enumerate}
\end{lemma}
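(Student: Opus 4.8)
The plan is to work in "straightened out" coordinates (Property \ref{straighteningOut}) around an appropriate center, since both of the quantities in play---the set $\Delta_X(\Gamma,\tilde\Gamma)$ and the notion of tangency to a $\dt$--rectangle---are diffeomorphism invariant, and then to carefully track the constants as the diffeomorphism is pulled back. For part \ref{lemma15Item1}: given $\Delta_{B(b,C^{-1}\alpha)}(\Gamma,\tilde\Gamma)<\delta$, first extract from the definition \eqref{defOfDeltaX} a point $y$ (essentially on both curves up to $O(\delta)$) where the unit normals $\nabla_y\Phi(x_0,y)/\|\nabla_y\Phi(x_0,y)\|$ and $\nabla_y\Phi(\tilde x_0,\tilde y)/\|\nabla_y\Phi(\tilde x_0,\tilde y)\|$ agree up to $O(\delta)$. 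Straighten out $\Phi$ around $x_0$ so that near $y$ the curve $\Gamma$ is (a piece of) a coordinate line; using the derivative bounds from Property \ref{cinematicDerivative}, the second curve $\tilde\Gamma$ is then a graph over the $y^{(1)}$--axis whose slope at $y$ is $O(\delta)$ and whose second derivative is $\sim|x_0-\tilde x_0|\lesssim 1$. Hence over an interval of length $\sqrt{\delta/t}$ centered at $y$, the two curves stay within $O(\delta)$ of each other in the $y^{(2)}$--direction (the tangency contributes $O(\delta)$ from the slope times $\sqrt{\delta/t}$, plus $O(|x_0-\tilde x_0|\cdot\delta/t)=O(\delta/t)$ from curvature; since $t>C\delta$ this last term is $\le\delta$). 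Taking $R$ to be the $\delta$--neighborhood of this arc (of $\Gamma$, say), both curves lie in the $C_1\delta$--neighborhood of any $(\delta,t)$--rectangle close to $R$, which is the definition of being tangent to it; pull everything back through $\psi_{x_0}$, absorbing the bounded worsening of constants.

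For part \ref{lemma15Item2}, run the argument in reverse. If $\Gamma,\tilde\Gamma$ are both tangent to a common $\dt$--rectangle $R$, straighten out around $x_0$ and observe that both curves lie in the $C_1\delta$--neighborhood of $R$; so there is a point $y\in R$ with $\dist(y,\Gamma)\lesssim\delta$ and $\dist(y,\tilde\Gamma)\lesssim\delta$. The $y^{(2)}$--displacement at the two endpoints of the arc and at its midpoint, together with the derivative bound \eqref{equivalentCinematicCurvatureConditionTwo} which makes the $y^{(2)}$--coordinate a good proxy for signed distance, force the slopes of the two graphs at $y$ to differ by $O(\delta/\sqrt{\delta/t})=O(\sqrt{\delta t})\lesssim\delta^{1/2}t^{1/2}$; but actually since the arc has length $\sqrt{\delta/t}$ and the curvature is $\lesssim 1$, a Taylor expansion of the difference of the two graph functions over the arc shows the slope difference at $y$ is $O(\delta\cdot\sqrt{t/\delta}) = O(\sqrt{\delta t})$, and combined with the vanishing of the normals this yields $|y-\tilde y|+|\text{normal difference}|\lesssim\delta$ after reading off the appropriate point---so $\Delta_{B(b,\alpha)}(\Gamma,\tilde\Gamma)\le C\delta$. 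For the sharper conclusion when $R,R'$ are comparable rather than equal: comparability means both sit in one $(C_0\delta,t)$--rectangle, so the same estimate applies with $\delta$ replaced by $C_0\delta$, giving $\Delta_{B(b,\alpha)}(\Gamma,\tilde\Gamma)\le CC_0\delta$; the claim $\lesssim\delta$ is then just this with the absolute constant absorbed. (I should double-check here which of the two statements in \ref{lemma15Item2} is meant to be the "$\le C\delta$" versus the "$\lesssim\delta$" one; the distinction is only in whether the constant is the specific $C$ appearing in the definition of "comparable" or a generic one.)

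The main obstacle I anticipate is bookkeeping rather than a genuine difficulty: one must be careful that "tangent to $R$'' is defined via containment in the $C_1\delta$--neighborhood of $\Gamma$ (Definition \ref{defnOfARectangle}), so the arc realizing the near-incidence need not be literally an arc of either curve, and one has to commute the $O(\delta)$ fattenings with the straightening diffeomorphism and with passing between "close'' and "comparable'' rectangles, each of which changes the relevant absolute constant. All of this is handled exactly as in the proof of Lemma 1.5 of \cite{Wolff4}; the only new point is that the derivative bounds come from Property \ref{cinematicDerivative} (equivalently, from the cinematic curvature condition) rather than from explicit computation with circles, and that the hypothesis $t>C\delta$ is what makes the curvature contribution $O(|x_0-\tilde x_0|\,\delta/t)$ negligible compared to $\delta$. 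Since this lemma and its proof are "the same as that of the corresponding lemma in \cite{Wolff4}'' modulo these substitutions, I would simply cite Wolff's argument and indicate the two changes.
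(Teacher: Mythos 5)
Your proposal is correct and takes the same route as the paper, which omits the proof of this lemma precisely because it is identical to that of Lemma 1.5 in \cite{Wolff4} once the derivative bounds of Property \ref{cinematicDerivative} are substituted for the explicit circle computations. One small slip worth fixing: in the curvature term of part \ref{lemma15Item1} you should bound $O(|x_0-\tilde x_0|\cdot\delta/t)\lesssim O(t\cdot\delta/t)=O(\delta)$ using $|x_0-\tilde x_0|\lesssim d(\Gamma,\tilde\Gamma)\sim t$, rather than writing $O(\delta/t)$, which would not be $\le\delta$ unless $t\gtrsim 1$.
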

\begin{lemma}\label{lemma116}
Let $\Gamma\in\WC,\tilde\Gamma\in\BC$. Then there are at most
$O(1)$ incomparable $\dt$--rectangles $R\subset
B(b,\alpha)$ tangent to both $\Gamma$ and $\tilde\Gamma$.
\end{lemma}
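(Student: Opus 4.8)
The plan is to argue that any two $\dt$--rectangles to which both $\Gamma$ and $\tilde\Gamma$ are tangent must be comparable unless they are separated in a quantitatively controlled way, and then to use Property \ref{tangencyPoint} to see that only boundedly many such separated positions are possible. First I would reduce to the interesting regime: if $\Delta_{B(b,C^{-1}\alpha)}(\Gamma,\tilde\Gamma)$ is not comparable to $\delta$, then by Lemma \ref{lemma15}\ref{lemma15Item2} the curves cannot be tangent to a common $\dt$--rectangle at all, so we may assume $\Delta_{B(b,C^{-1}\alpha)}(\Gamma,\tilde\Gamma)\lesssim\delta$, and we may also assume $d(\Gamma,\tilde\Gamma)>t>C\delta$ (the case $d(\Gamma,\tilde\Gamma)\lesssim\delta$ being trivial since then all relevant rectangles sit in a single $O(\delta)$--ball). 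In particular the hypothesis $\Delta_{B(b,C^{-1}\alpha)}(\Gamma,\tilde\Gamma)\leq {C'}^{-1}|x_0-\tilde x_0|$ of Property \ref{tangencyPoint} holds, so there is a \emph{unique} point $\xi=\xi(x_0,r_0,\tilde x_0)$ on $\Gamma$ where the normals of $\Gamma$ and $\tilde\Gamma$ are parallel, and by \eqref{intersectionContainedInBall},
\begin{equation*}
\Gamma\cap\tilde\Gamma\cap B(b,C^{-2}\alpha)\subset B\Big(\xi,\ C\big(\Delta_{B(b,C^{-1}\alpha)}(\Gamma,\tilde\Gamma)/|x_0-\tilde x_0|\big)^{1/2}\Big)\subset B\big(\xi,\ C(\delta/t)^{1/2}\big).
\end{equation*}

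Next I would observe that a $\dt$--rectangle $R$ tangent to both $\Gamma$ and $\tilde\Gamma$ is forced to lie near $\xi$. A $\dt$--rectangle is the $\delta$--neighborhood of an arc of length $\sqrt{\delta/t}$ of a $\Phi$--circle; if it is tangent (in the sense of Definition \ref{defnOfARectangle}) to both $\Gamma$ and $\tilde\Gamma$, then straightening out $\Phi$ around the center of $R$ and using the derivative bounds of Property \ref{cinematicDerivative} together with the fact that $\Gamma$ and $\tilde\Gamma$ are both within $O(\delta)$ of $R$ over an arc of length $\sqrt{\delta/t}$, one sees that $\Gamma$ and $\tilde\Gamma$ themselves come within $O(\delta)$ of each other along this whole arc. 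By the quantitative non-tangency built into the cinematic curvature condition (equivalently, by the second part of Lemma \ref{lemma15}, which already packages exactly this behaviour), such an arc must be contained in $B(\xi, C(\delta/t)^{1/2})$ — essentially because once we are at distance $\gg (\delta/t)^{1/2}$ from $\xi$ along $\Gamma$, the two curves have separated by $\gg\delta$. Thus every $\dt$--rectangle tangent to both $\Gamma$ and $\tilde\Gamma$ is contained in $B(\xi, C'(\delta/t)^{1/2})$.

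Finally, I would cover $B(\xi, C'(\delta/t)^{1/2})$ by a bounded number of balls of radius $c(\delta/t)^{1/2}$ and argue that any two $\dt$--rectangles tangent to $\Gamma$ and lying in a common such small ball are comparable. Indeed both such rectangles are $\delta$--neighborhoods of arcs of $\Gamma$ (up to an $O(\delta)$ error coming from the constant $C_1$ in the definition of incidence, which can be absorbed), and two arcs of the \emph{same} curve $\Gamma$ of length $\sqrt{\delta/t}$ whose centers are within $c(\delta/t)^{1/2}$ of one another are contained in a single arc of length $O(\sqrt{\delta/t})$, hence in a common $(C_0\delta,t)$--rectangle once $C_0$ is chosen large enough. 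Therefore the tangent rectangles fall into $O(1)$ comparability classes, which is precisely the claim. The main obstacle I expect is the second step: controlling the constant $C_1$ in Definition \ref{defnOfARectangle} carefully enough that "tangent to both $\Gamma$ and $\tilde\Gamma$ along an arc of length $\sqrt{\delta/t}$" genuinely forces the rectangle into the ball $B(\xi,C(\delta/t)^{1/2})$ rather than merely into an $O(\sqrt{\delta})$--neighborhood — this is where one must be precise about straightening out $\Phi$ and invoking the uniform derivative bounds of Property \ref{cinematicDerivative}, and it is the analogue of the corresponding delicate point in Lemma 1.16 of \cite{Wolff4}.
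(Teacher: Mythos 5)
Your argument is correct, but it takes a more explicitly geometric route than the paper's proof, which is a one-line packing estimate. You invoke Property \ref{tangencyPoint} together with \eqref{intersectionContainedInBall} (equivalently \eqref{diameterControl}) to show that every $\dt$--rectangle tangent to both $\Gamma$ and $\tilde\Gamma$ is confined to a ball of radius $\lesssim\sqrt{\delta/t}$ about the unique point $\xi$ of parallel normals, and then deduce $O(1)$ incomparable rectangles by a covering argument along $\Gamma$. The paper instead uses the \emph{area} bound \eqref{areaControl}: since $d(\Gamma,\tilde\Gamma)\sim t$, one has $|\Gamma^{C_1\delta}\cap\tilde\Gamma^{C_1\delta}\cap B(b^\prime,C^{-1}\alpha)|\lesssim\delta^{3/2}t^{-1/2}$, which is exactly the area of a single $\dt$--rectangle; every tangent rectangle lies in this set and incomparable rectangles are pairwise disjoint, so the bound follows immediately after summing over $O(1)$ translates $b^\prime=b+t_i$. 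The two proofs are morally the same --- your diameter estimate and the paper's area estimate are companion bounds from Property \ref{intersectionSizeBounds} --- but the area version is shorter because ``disjoint regions of the right size pack only $O(1)$ times into the intersection'' replaces the covering step and never needs to mention $\xi$. Two small points in your write-up: (1) your parenthetical dismissal of the case $d(\Gamma,\tilde\Gamma)\lesssim\delta$ on the grounds that ``rectangles sit in a single $O(\delta)$-ball'' is not right (a $\dt$--rectangle has length $\sqrt{\delta/t}$, not $\delta$), but this case is vacuous anyway since the $t$-bipartite-pair hypothesis forces $d(\Gamma,\tilde\Gamma)\sim t>C\delta$; and (2) the phrase ``rectangles lying in a common small ball of radius $c(\delta/t)^{1/2}$'' is imprecise, since a $\dt$--rectangle cannot be contained in such a ball --- as you correctly say one sentence later, it is the \emph{centers} of the defining arcs that should be grouped.
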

\begin{proof}
Since $d(\Gamma,\tilde\Gamma)\sim t$, \eqref{areaControl} gives us
the bound
\begin{equation}\label{controlOfRectCurveIntersectionSize}
|B(b^\prime,C^{-1}\alpha)\cap
\Gamma\cap\tilde\Gamma|\lesssim\delta^{3/2}t^{-1/2}
\end{equation}
for all $b^\prime$ in a sufficiently small neighborhood of $b$.
Each $\dt$--rectangle has area $\sim \delta^{3/2}t^{-1/2}$
and incomparable $\dt$--rectangles are pairwise disjoint.
The lemma follows by applying
\eqref{controlOfRectCurveIntersectionSize} to $O(1)$ choices of
$b^\prime=b+t_i$.
\end{proof}
\begin{lemma}\label{lemma117}$\phantom{1}$
\begin{enumerate}[label=(\roman{*}), ref=(\roman{*})]
\item\label{lemma117Item1} Let $\mathcal R\subset B(b,\alpha)$ be
a collection of pairwise nonclose rectangles. Then
\begin{equation*}
\#\tilde{\mathcal I}_{B(b,\alpha)}\gtrsim
\#\{(R,\Gamma,\tilde\Gamma)\in\mathcal R\times\BC\times \WC\colon
\Gamma\ \textrm{and}\ \tilde\Gamma\ \textrm{are tangent to}\ R\}.
\end{equation*}
\item There exists a collection $\mathcal R$ of pairwise
incomparable $\dt$--rectangles $R\in B(b,\alpha)$ such that
\begin{equation*}
\# \mathcal I_{B(b,C^{-1}\alpha)}\lesssim
\#\{(R,\Gamma,\tilde\Gamma)\in\mathcal R\times\BC\times \WC\colon
\Gamma\ \textrm{and}\ \tilde\Gamma\ \textrm{are tangent to}\ R\}.
\end{equation*}
\end{enumerate}
\end{lemma}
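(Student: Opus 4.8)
The plan is to establish the two halves of Lemma \ref{lemma117} separately, viewing the quantities $\#\mathcal I_X$ and $\#\tilde{\mathcal I}_X$ as counting almost-tangent pairs and the right-hand sides as counting incidences of pairs with rectangles, so the content is really a dictionary between "$\Delta_X$ is small'' and "a common rectangle exists'', supplied by Lemma \ref{lemma15}, together with a bounded-overlap argument to control the multiplicity of the correspondence.

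For part \ref{lemma117Item1}, I would argue as follows. Suppose $\mathcal R$ is a family of pairwise nonclose $\dt$--rectangles in $B(b,\alpha)$, and fix a triple $(R,\Gamma,\tilde\Gamma)$ with $\Gamma\in\WC$, $\tilde\Gamma\in\BC$ both tangent to $R$. By Lemma \ref{lemma15}\ref{lemma15Item2} (the first clause, since $R$ itself is a common rectangle), $\Delta_{B(b,\alpha)}(\Gamma,\tilde\Gamma)\leq C\delta$, so $(\Gamma,\tilde\Gamma)\in\tilde{\mathcal I}_{B(b,\alpha)}$. The only thing to check is that a given pair $(\Gamma,\tilde\Gamma)$ is not overcounted: if $R,R'\in\mathcal R$ are both tangent to both $\Gamma$ and $\tilde\Gamma$, then $R,R'$ are common rectangles for the pair, and since the rectangles in $\mathcal R$ are pairwise nonclose, Lemma \ref{lemma116} (after passing to incomparable subfamilies — nonclose implies incomparable up to adjusting the constant $C_0$, or one invokes the bounded-overlap count directly) shows there are only $O(1)$ of them. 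Dividing by this $O(1)$ multiplicity gives the desired inequality $\#\tilde{\mathcal I}_{B(b,\alpha)}\gtrsim \#\{(R,\Gamma,\tilde\Gamma)\colon\ldots\}$.

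For part \ref{lemma117Item2}, the direction is reversed: I would start from the set $\mathcal I_{B(b,C^{-1}\alpha)}$ of pairs $(\Gamma,\tilde\Gamma)$ with $\Delta_{B(b,C^{-1}\alpha)}(\Gamma,\tilde\Gamma)<\delta$ and, for each such pair, invoke Lemma \ref{lemma15}\ref{lemma15Item1} to produce a $\dt$--rectangle $R\subset B(b,\alpha)$ to which both $\Gamma$ and $\tilde\Gamma$ are tangent (indeed to which any rectangle close to $R$ is tangent, which gives the flexibility needed below). The issue now is to assemble these individual rectangles into a single family $\mathcal R$ of pairwise incomparable rectangles so that every pair in $\mathcal I_{B(b,C^{-1}\alpha)}$ is still "seen'' by some $R\in\mathcal R$; this is a greedy selection / chaining argument: cover all the rectangles produced by a maximal pairwise-incomparable subfamily $\mathcal R$, and use the fact that every produced rectangle is comparable to some member of $\mathcal R$ together with the "tangent to any close rectangle'' clause of Lemma \ref{lemma15}\ref{lemma15Item1} (and Lemma \ref{lemma15}\ref{lemma15Item2}, second clause, to transfer tangency between comparable rectangles while only losing a constant in $\Delta$) to conclude that for every pair in $\mathcal I_{B(b,C^{-1}\alpha)}$ there is a genuinely chosen $R\in\mathcal R$ tangent to both curves. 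The multiplicity in this direction — how many pairs map to one chosen $R$ — need not be bounded, but that is fine since the inequality goes the other way: we only need $\#\mathcal I_{B(b,C^{-1}\alpha)}\lesssim\#\{(R,\Gamma,\tilde\Gamma)\colon\ldots\}$, i.e. each pair contributes at least one triple.

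The main obstacle is the bookkeeping around the several nested balls $B(b,\alpha)$, $B(b,C^{-1}\alpha)$, $B(b,C^{-2}\alpha)$ and the comparable-versus-close-versus-incomparable trichotomy for rectangles: one has to be careful that Lemma \ref{lemma15}\ref{lemma15Item1} delivers a rectangle inside $B(b,\alpha)$ when the $\Delta$ hypothesis is only on $B(b,C^{-1}\alpha)$, that the greedy incomparable family in part \ref{lemma117Item2} does not destroy the tangency relation (this is exactly why the statement of Lemma \ref{lemma15}\ref{lemma15Item1} builds in tangency to \emph{all} close rectangles), and that "pairwise nonclose'' in part \ref{lemma117Item1} is strong enough to apply the $O(1)$ overlap bound of Lemma \ref{lemma116} — none of these is deep, but they are the points where the argument could go wrong, and they mirror exactly the corresponding bookkeeping in Wolff's Lemmas 1.5--1.16 in \cite{Wolff4}, so where the details coincide I would simply refer to \cite{Wolff4}.
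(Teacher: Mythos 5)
Your proposal captures the correct overall shape of both halves of the lemma, and the paper's own proof is terse (part \ref{lemma117Item1} is declared ``immediate'' and part \ref{lemma117Item2} is referred to Wolff's Lemma~1.7 with \eqref{xiDefn} and \eqref{xiComparableDelta} substituted for the Euclidean analogues), so the comparison is necessarily with Wolff's argument rather than with prose in this paper. That said, there are two concrete issues in your write-up.

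For part \ref{lemma117Item1}, the parenthetical ``nonclose implies incomparable up to adjusting the constant $C_0$'' has the implication backwards. Close means a common $(2\delta,t)$--rectangle; comparable means a common $(C_0\delta,t)$--rectangle with $C_0$ eventually taken large, so close implies comparable, hence incomparable implies nonclose, and \emph{not} conversely: a pairwise nonclose family can easily contain comparable pairs. Consequently you cannot ``pass to an incomparable subfamily'' and apply Lemma~\ref{lemma116} for free — some rectangles and hence some triples would be dropped. The correct (and, as the paper says, immediate) route is your fallback: rectangles tangent to both $\Gamma$ and $\tilde\Gamma$ all live in $\Gamma^{C_1\delta}\cap\tilde\Gamma^{C_1\delta}\cap B(b,\alpha)$, a set of diameter $\lesssim\sqrt{\delta/t}$ and width $\lesssim\delta$ by Property~\ref{intersectionSizeBounds}; pairwise nonclose $(\delta,t)$--rectangles in such a slab number $O(1)$ because any $C$ of them with a common point would fit inside a $(C'\delta,t)$--rectangle of bounded thickness, forcing two of them to be close once $C$ exceeds an absolute constant.

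For part \ref{lemma117Item2}, the greedy maximal pairwise-incomparable selection has a genuine gap that the appeal to Lemma~\ref{lemma15} does not close. If the rectangle $R_0$ produced by Lemma~\ref{lemma15}\ref{lemma15Item1} is discarded, maximality only guarantees that $R_0$ is \emph{comparable} to some retained $R\in\mathcal R$, whereas the ``tangent to every close rectangle'' clause of Lemma~\ref{lemma15}\ref{lemma15Item1} only covers rectangles \emph{close} to $R_0$; since comparable is strictly weaker than close (the relevant neighbourhood is $C_0\delta$ rather than $2\delta$), you have no guarantee that $\Gamma$ and $\tilde\Gamma$ are tangent to $R$. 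The second clause of Lemma~\ref{lemma15}\ref{lemma15Item2}, which you invoke ``to transfer tangency between comparable rectangles,'' does not do this: it converts tangency to comparable rectangles into a bound on $\Delta$, not into tangency to a single common rectangle, and the whole point of the triple count in the conclusion is that a single $R$ must serve both curves. The actual argument in Wolff's Lemma~1.7 (which the paper cites, substituting \eqref{xiDefn}--\eqref{xiComparableDelta}) builds the family $\mathcal R$ directly from a canonical covering of the black circles by arcs of length $\sqrt{\delta/t}$, using the uniqueness and stability of the tangency point $\xi$ from Property~\ref{tangencyPoint} to guarantee that each pair in $\mathcal I_{B(b,C^{-1}\alpha)}$ lands inside one of these prescribed rectangles; this sidesteps the comparable-versus-close mismatch that your greedy construction runs into. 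Either you should import Wolff's construction rather than the greedy one, or you must set up a second tier of constants (a ``tangent'' notion with margin $C_1\delta$ for $C_1$ chosen large relative to $C_0$) so that comparability genuinely transfers tangency — but that bookkeeping is precisely what your sketch leaves unresolved.
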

\begin{proof}
The first statement is immediate. The second statement can be proved in the same way as Lemma 1.7 in \cite{Wolff4} with \eqref{xiDefn} and \eqref{xiComparableDelta} used in place of the analogous equations in \cite{Wolff4}.
\end{proof}
\begin{lemma}
Let $\Gamma_1,\Gamma_2,\Gamma_3$ be three $\Phi$--circles. Let
$\mathcal R$ be a collection of pairwise incomparable rectangles
$R\in B(b,\alpha)$ with the property that for each $R\in\mathcal
R$ there is a $\Phi$--circle $\Gamma$ such that:

\begin{itemize}
\item $d(\Gamma,\Gamma_i)\geq t,\ i=1,2,3.$ %
\item $\Gamma,\Gamma_1$ are tangent to $R$.%
\item There exist two $\dt$--rectangles $R_2,R_3\in
B(b,\alpha)$ such that $\Gamma$ and $\Gamma_i$ are tangent to
$R_i,\ i=2,3$ and such that $R_1,R_2,R_3$ are pairwise
incomparable.
\end{itemize}
Then $\#\mathcal R\lesssim1.$
\end{lemma}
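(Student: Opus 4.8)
The plan is to reduce this to the Apollonius-type bound of Property~\ref{appolonius}. The statement says: given three fixed curves $\Gamma_1,\Gamma_2,\Gamma_3$ and a collection $\mathcal{R}$ of pairwise incomparable rectangles, each of which is ``witnessed'' by some curve $\Gamma$ that is almost-tangent to all three $\Gamma_i$ (in well-separated regions) and at distance $\gtrsim t$ from each, then $\#\mathcal{R}\lesssim 1$. The obvious strategy is: to each $R\in\mathcal{R}$ associate its witnessing curve $\Gamma=\Gamma(R)$, show that all these witnessing curves lie in (a bounded number of copies of) the set $Y$ from Property~\ref{appolonius}, hence by \eqref{boundOnYDiameter} the set of witnessing curves has diameter $\lesssim t$ in $(x_0,r_0)$-space (after splitting into two pieces), and finally show that two incomparable rectangles cannot share a witnessing curve \emph{and} that witnessing curves at distance $\lesssim t$ can only produce a bounded number of incomparable rectangles.

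First I would verify that each witnessing curve $\Gamma$ satisfies the defining conditions of $Y$ in \eqref{YDefn}. The hypotheses ``$\Gamma,\Gamma_i$ tangent to a common $\dt$-rectangle'' give, via Lemma~\ref{lemma15}\ref{lemma15Item2}, that $\Delta_{B(b,\alpha)}(\Gamma,\Gamma_i)\leq C\delta$ (and with the $C_1$ in \eqref{YDefn} absorbing constants, $\Delta_{B(b,C^{-1}\alpha)}(\Gamma,\Gamma_i)<C_1\delta$ after passing to finitely many translates $b+t_j$ as in the discussion following \eqref{defOfDeltaX}). The condition $d(\Gamma,\Gamma_i)\geq t$ is a hypothesis; the separation of the almost-tangency regions should follow from the pairwise incomparability of $R_1,R_2,R_3$ together with \eqref{intersectionContainedInBall}, which localizes $\Gamma\cap\Gamma_i$ near the point $\xi_i$ within a ball of radius $C(\Delta/|x_0-\tilde x_0|)^{1/2}\lesssim\sqrt{\delta/t}$; incomparable rectangles being separated by $\gtrsim\sqrt{\delta/t}$ then forces the $\dist$ condition in the last line of \eqref{YDefn}. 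So, up to the finite-translate bookkeeping, every witnessing curve lies in $Y$.

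Next, by \eqref{boundOnYDiameter}, $Y$ splits into two sets each of diameter $\lesssim t$, so the set of witnessing curves $\{\Gamma(R):R\in\mathcal{R}\}$ lies in $O(1)$ balls of radius $\lesssim t$ in $(x_0,r_0)$-space. It then suffices to bound, for witnessing curves $\Gamma,\Gamma'$ with $d(\Gamma,\Gamma')\lesssim t$, the number of incomparable rectangles they can jointly witness through $\Gamma_1$. Here is where I'd use that $R$ is a rectangle to which both $\Gamma$ and $\Gamma_1$ are tangent: by Lemma~\ref{lemma116}, a fixed pair $(\Gamma,\Gamma_1)$ is tangent to only $O(1)$ incomparable rectangles, so it remains to show that two distinct witnessing curves at distance $\lesssim t$ give tangency rectangles (with $\Gamma_1$) that are comparable to each other. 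This should follow from \eqref{intersectionContainedInBall}: both $\Gamma\cap\Gamma_1$ and $\Gamma'\cap\Gamma_1$ lie within $O(\sqrt{\delta/t})$ of the respective points $\xi(x_0,r_0,x_1^{(0)})$ and $\xi(x_0',r_0',x_1^{(0)})$, and these points move by $\lesssim\sqrt{\delta/t}$ when $(x_0,r_0)$ moves by $\lesssim t$ (using the Lipschitz dependence of $\xi$, cf.\ \eqref{xiCannotWander} and the derivative bounds of Property~\ref{cinematicDerivative}); hence the tangency regions all lie in a common $O(\sqrt{\delta/t})$-arc, i.e.\ a common $(C_0\delta,t)$-rectangle, so the $R$'s are pairwise comparable. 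Combining: $O(1)$ pieces of $Y$, each contributing $O(1)$ comparability classes, each class containing $O(1)$ incomparable rectangles, yields $\#\mathcal{R}\lesssim 1$.

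The main obstacle I anticipate is the second half — controlling how much the tangency point $\xi$ can wander as the witnessing curve varies within a diameter-$t$ piece of $Y$, and getting a clean bound of $\lesssim\sqrt{\delta/t}$ rather than something weaker. The relevant input is the bounded-conjugacy statement \eqref{KolasaLemma38} together with \eqref{xiCannotWander}, which were stated in \cite{Wolff3} precisely for curves in a common connected component of $Y$; so the delicate point is to ensure our witnessing curves do indeed land in a single connected component (or $O(1)$ of them), which is what \eqref{boundOnYDiameter} and the remarks around it are designed to give, modulo again the finitely-many-translates reduction that recurs throughout this section.
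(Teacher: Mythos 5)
Your proposal follows the paper's proof essentially step for step: show each witnessing curve lies in $Y$, invoke the two-component structure from Property~\ref{appolonius}, localize $\Gamma\cap\Gamma_1$ near $\xi$ via \eqref{intersectionContainedInBall}, and use \eqref{xiCannotWander} (via the $\Gamma_0$-adapted defining function) to keep $\xi$ from wandering more than $O(\sqrt{\delta/t})$ within a component. The only cosmetic difference is the finish: the paper concludes by noting that all candidate rectangles are confined to a set of measure $\lesssim\delta^{3/2}t^{-1/2}$ and incomparable rectangles are disjoint, whereas you phrase it via comparability; both are correct and equivalent (though your wording ``Lipschitz dependence of $\xi$'' slightly undersells the point — a raw Lipschitz bound would only give $O(t)$, and the whole content of \eqref{xiCannotWander} is the much stronger $O(\sqrt{\delta/t})$, which you do in fact cite and flag as the crux).
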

\begin{proof}
We shall establish the proof with the additional restriction that
$R$ must lie in $B(b^\prime,C^{-2}\alpha)$ for $b^\prime$ in a
sufficiently small neighborhood of $b$. Once this has been
established, we can recover the full result by selecting $O(1)$
choices of $b^\prime$ such that
$B(b,\alpha)\subset\bigcup_{b^\prime}B(b^\prime,C^{-2}\alpha).$

Let $R\in\mathcal R$ and let $\Gamma$ be a $\Phi$--circle
satisfying the above conditions. Then we must have $\Gamma\in Y,$
where $Y$ is as defined in \eqref{YDefn}; indeed the above
requirements on $\Gamma$ are precisely those needed to ensure that
$\Gamma\in Y$. By \eqref{areaControl},
\begin{equation}\label{areaControlInLemma}
\Gamma\cap\Gamma_1\cap B(b^\prime,C^{-2}\alpha)\subset B(\xi(x_0,
r_0,x_1), C\delta^{1/2}t^{-1/2}).
\end{equation}
Now, let $\Gamma_0\in Y$ and let $\tilde\Phi$ be a
$\Gamma_0$--adapted defining function with the same level sets as
$\Phi$. Since $\tilde\Phi$ has the same level sets as $\Phi$ and
the gradient of $\tilde\Phi$ is comparable to that of $\Phi$, it
suffices to prove the lemma for $\tilde\Phi$. However, by
\eqref{xiCannotWander} we have that if $\Gamma$ is in the same
connected component of $Y$ as $\Gamma_0$ then
\begin{equation}\label{xiControlInLemma}
|\xi(x_1,r_1,x_0)-\xi(x_1,r_1,x)|\lesssim\sqrt{\delta/t}.
\end{equation}
Since $Y$ contains only two connected components,
\eqref{areaControlInLemma} and \eqref{xiControlInLemma} imply that
\begin{equation}\label{possibleIntersectingRegion}
\begin{split}
\bigcup_{(x_0,r_0)\in Y}\Gamma(x_0&,r_0)\cap\Gamma_1\cap
B(b^\prime,C^{-2}\alpha)\\
&\subset \Big(B(z_0, C\delta^{1/2}t^{-1/2})\cap \Gamma_1\Big)\cup
\Big(B(z_1, C\delta^{1/2}t^{-1/2})\cap \Gamma_1\Big),
\end{split}
\end{equation}
where $z_0,z_1$ are points in the two connected components of $Y$
respectively. In particular, the set on the right hand side of
\eqref{possibleIntersectingRegion} has measure $\lesssim
\delta^{3/2}t^{-1/2}$. Since every $R\in\mathcal R$ must lie in
this set, and pairwise incomparable rectangles must be disjoint,
we obtain $\#\mathcal R\lesssim 1$.
\end{proof}
\begin{lemma}\label{lemma19}
Let $\Gamma,\tilde\Gamma$ be $\Phi$--circles with
$d(\Gamma,\tilde\Gamma)=t>C\delta$ and $r_0\geq\tilde r_0$. Let
$R,\tilde R\in B(b,C^{-1}\alpha)$ be comparable
$\dt$--rectangles with $\Gamma,\tilde\Gamma$ tangent to
$R,\tilde R$ respectively. Then
\begin{enumerate}[label=(\roman{*}), ref=(\roman{*})]
\item\label{lemma19Item1} $\tilde\Gamma\cap B(b,C^{-1}\alpha)$ is
contained in the $C\delta$--neighborhood of
\begin{equation*} \{y\in
B(b,\alpha)\colon\Phi(x_0,y)\leq r_0\}.
\end{equation*}
\item\label{lemma19Item2} For any constant $A$ there is a constant
$C(A)$ such that the cardinality of any set of pairwise
incomparable $\dt$--rectangles $R\in B(b,C^{-1}\alpha)$
each of which is tangent to $\Gamma$ and intersects the
$A\delta$--neighborhood of
\begin{equation*}
\{y\in B(b,\alpha)\colon\Phi(\tilde x_0,y)\leq r_0\}
\end{equation*}
 does not exceed
$C(A)$.
\end{enumerate}
\end{lemma}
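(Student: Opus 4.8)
Both parts reduce, via Lemma~\ref{lemma15}\ref{lemma15Item2}, to the case of curves that are $O(\delta)$--almost tangent: since $R$ and $\tilde R$ are comparable we have $\Delta_{B(b,\alpha)}(\Gamma,\tilde\Gamma)\lesssim\delta$, and together with $d(\Gamma,\tilde\Gamma)=t>C\delta$ this forces $|x_0-\tilde x_0|\sim t$ (a near--tangency with $d=t$ cannot have $|x_0-\tilde x_0|$ much smaller than $|r_0-\tilde r_0|$). Let $\xi=\xi(x_0,r_0,\tilde x_0)$ be the point of parallel normals from Property~\ref{tangencyPoint}. For part~\ref{lemma19Item1} I would straighten out $\Phi$ around $x_0$ (Property~\ref{straighteningOut}); then $\Gamma=\{y^{(2)}=0\}$ and $\{y\colon\Phi(x_0,y)\le r_0\}=\{y^{(2)}\le 0\}$, so the claim becomes $y^{(2)}\le C\delta$ on $\tilde\Gamma\cap B(b,C^{-1}\alpha)$. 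By \eqref{continuityofDiffeo} the straightening changes the constants by at most a bounded amount, and losing algebraicity of $\Phi$ is harmless since algebraicity is used only in Lemma~\ref{coneIsSemiAlgebraic}.

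Since $|\tilde x_0-x_0|\sim t$ is small, Property~\ref{cinematicDerivative} and continuity give $|\partial_{y^{(2)}}\Phi(\tilde x_0,y)|\sim 1$ throughout $B(b,\alpha)$, so $\tilde\Gamma\cap B(b,C^{-1}\alpha)$ is a single graph $y^{(2)}=h(y^{(1)})$. At $\xi=(\xi^{(1)},0)$ one has $h(\xi^{(1)})=O(\delta)$ by \eqref{xiComparableDelta} and $h'(\xi^{(1)})=0$ by \eqref{xiDefn} (parallel normals means $\partial_{y^{(1)}}\Phi(\tilde x_0,\xi)=0$ in straightened coordinates). Differentiating the graph equation gives $h''=-\partial^2_{y^{(1)}}\Phi(\tilde x_0,\cdot)/\partial_{y^{(2)}}\Phi(\tilde x_0,\cdot)$ up to terms of size $O(t^2)$ (in straightened coordinates $\partial^2_{y^{(2)}}\Phi(x_0,\cdot)$ and $\partial_{y^{(1)}}\partial_{y^{(2)}}\Phi(x_0,\cdot)$ vanish, so the corrections carry factors $O(|\tilde x_0-x_0|)$). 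Now $\partial^2_{y^{(1)}}\Phi(x_0,\cdot)$ also vanishes, so $\partial^2_{y^{(1)}}\Phi(\tilde x_0,y)=\nabla_x[\partial^2_{y^{(1)}}\Phi](x_0,y)\cdot(\tilde x_0-x_0)+O(t^2)$, while $0=\partial_{y^{(1)}}\Phi(\tilde x_0,\xi)=\nabla_x[\partial_{y^{(1)}}\Phi](x_0,\xi)\cdot(\tilde x_0-x_0)+O(t^2)$. In straightened coordinates and up to the factor $|\nabla_y\Phi|\sim 1$, the cinematic curvature condition \eqref{cinematicCurvatureCondition} asserts precisely that $\nabla_x[\partial_{y^{(1)}}\Phi]$ and $\nabla_x[\partial^2_{y^{(1)}}\Phi]$ are linearly independent at $(x_0,b)$, with determinant bounded away from $0$; hence $(\tilde x_0-x_0)/|\tilde x_0-x_0|$ lies within $O(t)$ of a fixed unit vector $u$ orthogonal to $\nabla_x[\partial_{y^{(1)}}\Phi]$, and $|\nabla_x[\partial^2_{y^{(1)}}\Phi](x_0,y)\cdot u|\gtrsim 1$ uniformly for $y\in B(b,C^{-1}\alpha)$ (choosing $\alpha$ small in terms of $\Phi$). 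So $|\partial^2_{y^{(1)}}\Phi(\tilde x_0,\cdot)|\sim t$ with a single sign on $B(b,C^{-1}\alpha)$, and the hypothesis $r_0\ge\tilde r_0$ fixes that sign to be the one making $h''<0$ (in the model $\Phi(x,y)=|x-y|$, $r_0\ge\tilde r_0$ is exactly what places $\tilde x_0$ on the side of $\xi$ interior to the disc bounded by $\Gamma$). Consequently $\partial_{y^{(1)}}\Phi(\tilde x_0,\cdot)$ is monotone along the $y^{(1)}$--range of $B(b,C^{-1}\alpha)$, $\xi^{(1)}$ is its only critical point, and concavity gives $h\le h(\xi^{(1)})=O(\delta)$ throughout, proving part~\ref{lemma19Item1}.

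For part~\ref{lemma19Item2} I would first run the same argument with $\Phi$ straightened around $\tilde x_0$ instead; because the ordering $r_0\ge\tilde r_0$ is now the ``opposite'' one relative to the roles of the two curves, it yields the reversed inclusion $\Gamma\cap B(b,C^{-1}\alpha)\subset\{y\colon\Phi(\tilde x_0,y)\ge\tilde r_0\}+B(0,C\delta)$. Thus $\Gamma$ meets the $A\delta$--neighborhood of the sublevel region bounded by $\tilde\Gamma$ only where $\tilde r_0-\Phi(\tilde x_0,\cdot)=O_A(\delta)$, i.e.\ only within $O_A(\delta)$ of $\tilde\Gamma$ itself; by \eqref{diameterControl} (with $\Delta_{B(b,C^{-1}\alpha)}(\Gamma,\tilde\Gamma)\lesssim\delta$ and $d(\Gamma,\tilde\Gamma)\sim t$, applied over boundedly many balls $B(b',C^{-2}\alpha)$ covering $B(b,C^{-1}\alpha)$), the set of such points is contained in the union of boundedly many balls of radius $\lesssim_A\sqrt{\delta/t}$. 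Since a $\dt$--rectangle has length $\sqrt{\delta/t}$ and width $\sim\delta$, every $R$ tangent to $\Gamma$ that meets that neighborhood lies in the $O_A(\delta)$--neighborhood of one of these balls; pairwise incomparable rectangles are disjoint, so at most $C(A)$ of them can occur, which is part~\ref{lemma19Item2}.

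Essentially all of this parallels Wolff's proof of Lemma~1.9 in \cite{Wolff4}, so in the paper one would cite that proof and only record the substitutions: Property~\ref{tangencyPoint} and \eqref{xiDefn}--\eqref{xiComparableDelta} in place of elementary facts about internally tangent circles, Property~\ref{intersectionSizeBounds} in place of explicit circle--intersection bounds, and Property~\ref{cinematicDerivative} together with \eqref{cinematicCurvatureCondition} in place of the curvature computations. The only genuinely new point---and the main obstacle---is pinning down the sign of $\partial^2_{y^{(1)}}\Phi(\tilde x_0,\cdot)$: one must verify that cinematic curvature forces $\tilde x_0-x_0$ to point (nearly) along one fixed line, and that along that line the ordering $r_0\ge\tilde r_0$ selects the sign for which $h$ is concave. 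The recurring technical nuisance flagged in the remark after Lemma~\ref{lemmaDistCompDelta}---that $\Phi$ is only defined on $B(b,\alpha)$, so one cannot freely move curves to make them incident---is what forces the $+B(0,C\delta)$ in the inclusions and the passage to the smaller ball $B(b,C^{-1}\alpha)$.
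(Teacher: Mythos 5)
Your argument follows the same route as the paper's: straighten $\Phi$ around $x_0$, invoke Lemma~\ref{lemma15}.\ref{lemma15Item2} to get $\Delta_{B(b,C^{-1}\alpha)}(\Gamma,\tilde\Gamma)\lesssim\delta$, use Property~\ref{tangencyPoint} to produce the unique parallel-normals point $\xi$ with $|\xi^{(2)}|\lesssim\delta$ via \eqref{xiComparableDelta}, and conclude that the height function $h$ of $\tilde\Gamma$ is $\le C\delta$; for part~\ref{lemma19Item2}, the symmetric inclusion plus Property~\ref{intersectionSizeBounds} and disjointness of incomparable $\dt$--rectangles. (Using \eqref{diameterControl} versus \eqref{areaControl} at the last step is an inessential difference.) Where you differ is that the paper simply asserts that $\xi^{(1)}$ is the point where the height function attains its \emph{maximum}, relying implicitly on the Kolasa--Wolff analysis behind Property~\ref{tangencyPoint} (eqs.\ (26)--(27) of \cite{Wolff3}), whereas you try to re-derive this from scratch via implicit differentiation and the cinematic curvature determinant. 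Your reconstruction correctly reduces the issue to fixing the sign of $\partial^2_{y^{(1)}}\Phi(\tilde x_0,\cdot)$, and correctly observes that this is the only genuinely delicate point. But the step you need — that $r_0\ge\tilde r_0$ forces $h''<0$ rather than $h''>0$ — is asserted only by analogy with the circle model and is not actually proved: from $\tilde r_0-r_0=\nabla_x\Phi(x_0,\xi)\cdot(\tilde x_0-x_0)+O(\delta+t^2)$ and $\nabla_x[\partial_{y^{(1)}}\Phi](x_0,\xi)\cdot(\tilde x_0-x_0)=O(t^2)$ one still needs a sign relation between the $u$-components of $\nabla_x\Phi$ and $\nabla_x[\partial^2_{y^{(1)}}\Phi]$ (where $u\perp\nabla_x[\partial_{y^{(1)}}\Phi]$), and also a lower bound $|\nabla_x\Phi(x_0,\xi)\cdot u|\gtrsim1$ so that $r_0\ge\tilde r_0$ actually constrains the direction of $\tilde x_0-x_0$; neither is an immediate consequence of the cinematic curvature hypotheses as you have used them. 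So either supply this extra computation, or do what the paper does and cite the relevant Kolasa--Wolff lemma for the fact that $\xi^{(1)}$ is a maximum when $r_0\ge\tilde r_0$.
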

\begin{proof}
Straighten $\Phi$ around $x_0$. By Lemma
\ref{lemma15}.\ref{lemma15Item2}, with $\alpha$ replaced by
$C^{-1}\alpha$, we have
$\Delta_{B(b,C^{-1}\alpha)}(\Gamma,\tilde\Gamma)\leq
C^\prime\delta$. Thus if we choose the value of $C$ in the
statement of the lemma to be sufficiently large (depending on
$C^\prime$), then $|x_0-\tilde x_0|>
C^{\prime\prime}\Delta_{B(b,C^{-1}\alpha)}(\Gamma,\tilde\Gamma)$,
so by Property \ref{tangencyPoint} of cinematic curvature, there
exists a unique point $\xi(\tilde x_0,\tilde
r_0,x_0)\in\tilde\Gamma$ satisfying \eqref{xiDefn}, i.e.
\begin{equation*}
\nabla_y\Phi(\tilde x_0,\xi)=(0,\pm 1),
\end{equation*}
so $\xi^{(1)}$ is the point where the function
$y^{(1)}\mapsto\Phi(\tilde x_0, (y^{(1)},y^{(2)}))$ achieves its maximum in the domain
$(y^{(1)},y^{(2)})\in B(b,\alpha)$, where
$y^{(2)}=y^{(2)}(y^{(1)})$ is implicitly defined by
$(y^{(1)},y^{(2)}(y^{(1)}))\in\tilde\Gamma$ (we can verify without
difficulty that this is well-defined). By
\eqref{xiComparableDelta} (noting that in the straightened out
coordinate system, $\Gamma=\{y^{(2)}=0\}\cap U_2^\prime$),
\begin{align*}
\Phi(\tilde x_0, \xi)&\lesssim
\Delta_{B(b,C^{-1}\alpha)}(\Gamma,\tilde\Gamma)\\
&\lesssim\delta,
\end{align*}
and thus for an appropriate choice of $C$,
\begin{equation*}
\tilde \Gamma\cap U_2^\prime\subset \{y^{(2)}<C\delta\}.
\end{equation*}
Returning to our original coordinate system, this is Statement
\ref{lemma19Item1} of the lemma.

To obtain the second statement, note that by the same reasoning as
above,
\begin{equation}
\begin{split}
\Gamma^{C\delta}\cap \Big(\{y\in B(b,\alpha)\colon\Phi(\tilde x_0,y)\leq
\tilde r_0&\}+B(0,A\delta)\Big)\\
&\subset \Gamma^{C(A)\delta}\cap\tilde\Gamma^{C(A)\delta}\cap B(b,\alpha)
\end{split}
\end{equation}
for a suitable constant $C(A),$ where the $+$ in the above
equation denotes the Minkowski sum. The result then follows from
\eqref{areaControl} and the fact that incomparable rectangles are
disjoint.
\end{proof}
\begin{lemma}\label{lemma110}$\phantom{1}$
\begin{enumerate}[label=(\roman{*}), ref=(\roman{*})]%
\item \label{lemma10Item1} The cardinality of any set of
$(\sim\mu,\sim\nu)$ rectangles is
$\lesssim \frac{mn^{2/3}}{\mu\nu^{2/3}}$.%
\item \label{lemma10Item2} The cardinality of any set of
$(\gtrsim\mu,\gtrsim\nu)$ rectangles is
$\lesssim\frac{mn^{2/3}}{\mu\nu^{2/3}}+\frac{n}{\nu}\log\frac{m}{\mu}$.
\end{enumerate}
\end{lemma}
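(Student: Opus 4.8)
The plan is to prove this exactly as Wolff proves Lemma~1.10 of \cite{Wolff4}, feeding in our analogues of his facts about circle configurations: Lemma~\ref{lemma116} (a white and a black curve are jointly tangent to $O(1)$ pairwise incomparable rectangles), the three-curve ``Apollonius-type'' lemma proved just before Lemma~\ref{lemma19}, and the intersection-area bound \eqref{areaControl}. First I would reduce to the case that the rectangle family $\mathcal R$ in question is pairwise incomparable: comparable rectangles lie in a common $(C_0\delta,t)$-rectangle, so each comparability class has $O(1)$ members and passing to a maximal pairwise incomparable subfamily costs only a constant. I also record that a $\dt$-rectangle has area $\sim\delta^{3/2}t^{-1/2}$ and that pairwise incomparable rectangles are disjoint, facts that are used together with \eqref{areaControl} throughout.

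For part~\ref{lemma10Item1}, let $\mathcal R$ be a pairwise incomparable family of $(\sim\mu,\sim\nu)$-rectangles and set $k=\#\mathcal R$. I would count the number $T$ of tuples $(\tilde\Gamma,R_1,R_2,R_3,\Gamma_1,\Gamma_2,\Gamma_3)$ with $\tilde\Gamma\in\BC$; $R_1,R_2,R_3\in\mathcal R$ distinct (hence pairwise incomparable) and each tangent to $\tilde\Gamma$; and $\Gamma_i\in\WC$ tangent to $R_i$. For the lower bound, each $R_i$ is tangent to $\gtrsim\mu$ curves of $\WC$ and to $\gtrsim\nu$ curves of $\BC$, so, writing $e_{\tilde\Gamma}$ for the number of rectangles of $\mathcal R$ tangent to $\tilde\Gamma$, we get $\sum_{\tilde\Gamma}e_{\tilde\Gamma}\gtrsim k\nu$ and $T\gtrsim\mu^{3}\sum_{\tilde\Gamma}\binom{e_{\tilde\Gamma}}{3}$; since at most $n$ of the $e_{\tilde\Gamma}$ are nonzero, convexity of $x\mapsto\binom{x}{3}$ gives $T\gtrsim k^{3}\mu^{3}\nu^{3}n^{-2}$ once $k\gtrsim n/\nu$, the complementary regime being dealt with directly via Lemma~\ref{lemma116} as in \cite{Wolff4}. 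For the upper bound, fix $(\Gamma_1,\Gamma_2,\Gamma_3)\in\WC^{3}$ (at most $m^{3}$ choices): since $\tilde\Gamma\in\BC$ and the $\Gamma_i\in\WC$ lie in a $t$-bipartite pair we have $d(\tilde\Gamma,\Gamma_i)\in(t,2t)$, and since $R_1,R_2,R_3$ are pairwise incomparable their regions of near-tangency with the $\Gamma_i$ are $\gtrsim\sqrt{\delta/t}$-separated; these are precisely the hypotheses of the three-curve lemma, which therefore bounds the number of admissible $R_1$ by $O(1)$. Once $R_1$ is fixed $\tilde\Gamma$ is one of the $\lesssim\nu$ black curves tangent to it, and then $R_2$ and $R_3$ are each among the $O(1)$ rectangles jointly tangent to $\tilde\Gamma$ and to $\Gamma_2$, respectively $\Gamma_3$, by Lemma~\ref{lemma116}. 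Thus $T\lesssim m^{3}\nu$, and comparing the two estimates gives $k\lesssim mn^{2/3}\mu^{-1}\nu^{-2/3}$.

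For part~\ref{lemma10Item2} I would deduce the bound for $(\gtrsim\mu,\gtrsim\nu)$-rectangles from part~\ref{lemma10Item1} by dyadic decomposition. Every $(\gtrsim\mu,\gtrsim\nu)$-rectangle is of type $(\sim\mu',\gtrsim\nu)$ for a unique dyadic $\mu'$ with $\mu\lesssim\mu'\leq m$, so there are $O(\log(m/\mu))$ classes; within a class, a further dyadic decomposition over the black multiplicity $\nu'\geq\nu$ and an application of part~\ref{lemma10Item1}, followed by summation of the geometric series in $\nu'$, bounds the class by $\lesssim mn^{2/3}(\mu')^{-1}\nu^{-2/3}$ plus the $O(n/\nu)$ remainder inherited from the set-aside regime of part~\ref{lemma10Item1}. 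Summing over the $O(\log(m/\mu))$ values of $\mu'$: the main terms form a geometric series summing to $\lesssim mn^{2/3}\mu^{-1}\nu^{-2/3}$, while the $O(n/\nu)$ remainders are the same size on every scale and hence contribute $\lesssim (n/\nu)\log(m/\mu)$, giving the claimed estimate.

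I expect the main obstacle to lie not in the counting, which is formal, but in checking that the geometric inputs apply with the correct constants after the reductions: that passing from $B(b,\alpha)$ to sub-balls $B(b',C^{-1}\alpha)$, $B(b',C^{-2}\alpha)$ and to $O(1)$ translates (as in the proofs of Lemma~\ref{lemma116} and the three-curve lemma) is compatible with the tangency and incomparability relations driving the tuple count, and that the degenerate regimes in the convexity step and the dyadic sum are dispatched using Lemma~\ref{lemma116} and \eqref{areaControl} just as in \cite{Wolff4}. Nothing beyond Lemmas~\ref{lemma15}--\ref{lemma19}, Property~\ref{appolonius} and Property~\ref{intersectionSizeBounds} should be required.
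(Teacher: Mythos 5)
Your proof is correct and follows the same route as the paper, which simply observes that once Lemma~\ref{lemma116} and the unnamed three-curve (Apollonius-type) lemma are in hand, part~\ref{lemma10Item1} reduces to the K\H{o}v\'ari--S\'os--Tur\'an forbidden-submatrix bound and part~\ref{lemma10Item2} follows by dyadic summation; your tuple-count (lower bound by convexity of $\binom{x}{3}$, upper bound $\lesssim m^3\nu$ via the three-curve lemma and Lemma~\ref{lemma116}) is precisely the standard proof of that KST bound, unwound in place, exactly as Wolff does in \cite{Wolff4}.
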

\begin{remark}
Recall that a rectangle of type $(\gtrsim \mu,\gtrsim \nu)$ is a
rectangle that is incident to at least $C\mu$ curves in $\WC$ and
at least $C\nu$ curves in $\BC$ for some absolute constant $C$ (a
rectangle of type $(\sim\mu,\sim\nu)$ is defined similarly), so
the statement of the lemma is well defined.
\end{remark}
\begin{proof}
Combined with the previous lemmas, Statement \ref{lemma10Item1} is just the graph-theo\-retic statement, due to K\H{o}vari, S\'os, and Turan in \cite{Turan}, that a $m\times n$ matrix with
entries 0 and 1 which has a forbidden $2\times 3$ submatrix of 1s
has $\lesssim mn^{2/3}$ 1s in total. Statement \ref{lemma10Item2} is obtained
from Statement \ref{lemma10Item1} by dyadic summation.
\end{proof}

The following lemma is the analogue of Lemma 1.11 in \cite{Wolff4}. The proof is identical.
\begin{lemma}\label{lemma111}
Let $\WB$ be a $t$--bipartite pair that has no
$(\gtrsim1, \gtrsim\nu_0)$ or $(\gtrsim\mu_0,\gtrsim1)$ rectangles
$R\in B(b,\alpha)$. Then
\begin{equation}
\#\mathcal I_{B(b,C^{-1}\alpha)}(\mathcal W,\mathcal
B)\lesssim\mu_0^{1/3}nm^{2/3}\log \nu_0+\nu_0m\log\mu_0.
\end{equation}
\end{lemma}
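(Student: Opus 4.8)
Since the paper states that the proof is identical to that of Lemma~1.11 of \cite{Wolff4}, the plan is to transcribe Wolff's argument, substituting Lemmas~\ref{lemma117} and~\ref{lemma110} wherever he invokes the corresponding statements.

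First I would pass from the pair count to a count of incidence triples. By the second part of Lemma~\ref{lemma117} there is a family $\mathcal R\subset B(b,\alpha)$ of pairwise incomparable $\dt$--rectangles with
\[
\#\mathcal I_{B(b,C^{-1}\alpha)}(\WC,\BC)\ \lesssim\ \#\bigl\{(R,\Gamma,\tilde\Gamma)\in\mathcal R\times\BC\times\WC\colon\Gamma,\tilde\Gamma\ \text{tangent to}\ R\bigr\}\ =\ \sum_{R\in\mathcal R}\mu(R)\,\nu(R),
\]
where $\mu(R)$, respectively $\nu(R)$, denotes the number of curves of $\WC$, respectively $\BC$, tangent to $R$. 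Only rectangles with $\mu(R),\nu(R)\geq1$ contribute, and the hypothesis on $\WB$ says exactly that no rectangle in $B(b,\alpha)$ is incident both to $\gtrsim\mu_0$ curves of $\WC$ and to $\gtrsim1$ curves of $\BC$, or both to $\gtrsim1$ curves of $\WC$ and to $\gtrsim\nu_0$ curves of $\BC$; hence every rectangle incident to at least the threshold number of curves from each family has $\mu(R)\lesssim\mu_0$ and $\nu(R)\lesssim\nu_0$, while the remaining ``one-sided'' rectangles are handled by a cruder estimate exactly as in \cite{Wolff4}.

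Next I would run the standard two-parameter dyadic pigeonhole on $\mathcal R$, sorting the rectangles by type $(\sim\mu,\sim\nu)$ with $\mu=2^j$, $\nu=2^k$, so that for the two-sided part $j$ ranges over $\lesssim\log\mu_0$ values and $k$ over $\lesssim\log\nu_0$ values. To each dyadic block I apply part~\ref{lemma10Item1} of Lemma~\ref{lemma110}, used in the orientation that places the $2/3$--power on $\#\WC=m$ (this is legitimate because the hypothesis and all of the preceding rectangle lemmas are symmetric in $\WC$ and $\BC$), obtaining $\#\{(\sim\mu,\sim\nu)\text{ rectangles}\}\lesssim m^{2/3}n\,\mu^{-2/3}\nu^{-1}$, so that the $(\mu,\nu)$--block contributes $\lesssim\mu^{1/3}m^{2/3}n$ to $\sum_R\mu(R)\nu(R)$. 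Summing the geometric series in $\mu^{1/3}$ (dominated by its largest term $\mu_0^{1/3}$) and picking up one factor of $\log\nu_0$ for the $\lesssim\log\nu_0$ scales of $\nu$ gives the main term $\mu_0^{1/3}nm^{2/3}\log\nu_0$. The one-sided rectangles, together with the error terms appearing in part~\ref{lemma10Item2} of Lemma~\ref{lemma110} at the scales where the clean forbidden-submatrix bound is unavailable, are summed in the same way and, after absorbing the lower-order logarithmic factors as in \cite{Wolff4}, yield the second term $\nu_0 m\log\mu_0$.

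Because this is Wolff's Lemma~1.11 essentially verbatim once Lemmas~\ref{lemma117} and~\ref{lemma110} are in hand, I do not anticipate a genuine obstacle: all the geometry (the intersection-area bounds of Property~\ref{intersectionSizeBounds}, the Appolonius-type bound of Property~\ref{appolonius}, the unique point of parallel normals of Property~\ref{tangencyPoint}) has already been spent in proving those two lemmas, and what is left is pure combinatorial bookkeeping. The only point that genuinely requires care is tracking which dyadic ranges are actually capped at $\mu_0$ and $\nu_0$ — this is exactly where the hypothesis on $\WB$ is consumed — and routing the one-sided rectangles through the correct special case; there I would follow \cite{Wolff4} step by step rather than re-derive the split.
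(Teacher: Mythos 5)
Your proposal is correct and reproduces what the paper does, namely defer verbatim to Wolff's Lemma~1.11 with Lemmas~\ref{lemma117} and~\ref{lemma110} substituted for Wolff's 1.7 and 1.10. You are also right to flag the orientation of Lemma~\ref{lemma110}(i): as printed the bound $\lesssim mn^{2/3}/(\mu\nu^{2/3})$ would produce $\nu_0^{1/3}mn^{2/3}\log\mu_0$ rather than the stated $\mu_0^{1/3}nm^{2/3}\log\nu_0$, so one must apply the $\WC\leftrightarrow\BC$--swapped form $\lesssim m^{2/3}n/(\mu^{2/3}\nu)$ (which is in fact the form appearing as Wolff's Lemma~1.10(i)), and this is legitimate because the definition of a $t$--bipartite pair and each of the supporting rectangle lemmas are invariant under interchanging $\WC$ and $\BC$.
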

\begin{lemma}\label{cellDecompLemma3}
Let $\WB$ be a $t$--bipartite pair with $\#\BC=n$. Randomly
select a subset $\mathcal D\subset\BC$ with $\#\mathcal D
=N<\frac{1}{C}n$. (we shall call the elements of $\mathcal D$
\emph{dividing circles}), and let $\mathcal
S=\{S_{\Gamma,B(b,\alpha)}\colon\Gamma\in\mathcal D\}$. Then with
high probability (relative to our random selection of $\mathcal
D\subset\BC$), we can partition
\begin{equation}
\WC=\WC^*\sqcup\bigsqcup_1^M \WC_i
\end{equation}
so that the decomposition has the following properties.
\begin{enumerate}[label=(\roman{*}), ref=(\roman{*})]
\item \label{numberOfCells}$M\lesssim N^3\log N$.
\item \label{numberOfCellIncidentCircles}For each $i$,
\begin{equation*} \#\{\Gamma\in\BC\colon
\Delta_{B(b,C^{-1}\alpha)}(\Gamma,\tilde\Gamma)\leq C\delta\
\textrm{for some}\ \tilde\Gamma\in\WC_i\}\lesssim\frac{n\log
n}{N}.
\end{equation*}
\item \label{WStarProperties}For each $\Gamma\in\WC^*$ there
exists a dividing $\Phi$--circle $\tilde\Gamma$ such that
\begin{equation*}
\Delta_{B(b,\alpha)}(\Gamma,\tilde\Gamma)\lesssim\delta.
\end{equation*}
\end{enumerate}
\end{lemma}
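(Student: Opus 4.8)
The plan is to run the vertical-decomposition and random-sampling machinery of Lemmas~\ref{constDescrComplexCellDecomp} and~\ref{randomSamplingLemma} on the sample $\mathcal D\subset\BC$, to distribute the curves of $\WC$ according to which cell (or dividing surface) their representative point $(x_0,r_0)\in U_1\times(1-\tau,1)$ lands in, and then to convert the purely set-theoretic statement ``$S_{\Gamma,B(b,\alpha)}$ crosses the cell $\Omega_i$'' supplied by Lemma~\ref{randomSamplingLemma} into the quantitative $\Delta$-closeness assertions~\ref{numberOfCellIncidentCircles} and~\ref{WStarProperties}, using the containments~\eqref{surfaceContainmentOne}--\eqref{surfaceContainmentTwo}, the comparison~\eqref{distanceControlsDelta} of Lemma~\ref{lemmaDistCompDelta}, and the Lipschitz property of $\Delta_{B(b,\alpha)}$ noted in the proof of that lemma.

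First I would apply Lemma~\ref{constDescrComplexCellDecomp} to $\mathcal D$, obtaining $M\lesssim N^3\log N$ cells $\Omega_1,\dots,\Omega_M$, the dividing surfaces $\{S_{\Gamma,B(b,\alpha)}:\Gamma\in\mathcal D\}$, and the vertical walls; this is~\ref{numberOfCells}. For $\Gamma\in\WC$ write $p_\Gamma=(x_0,r_0)$; I would put $\Gamma$ into $\WC_i$ when $p_\Gamma\in\Omega_i$, and into $\WC^*$ when $p_\Gamma$ lies on one of the dividing surfaces or vertical walls. One may assume that no $p_\Gamma$ lies on a vertical wall, since the union of walls is a lower-dimensional semi-algebraic set depending only on $\mathcal D$ and on the auxiliary parameter $w$: choosing $w$ generically in the admissible range $0\le w_1,w_2,w_3<C^{-1}\delta$ (equivalently, perturbing the finitely many curves of $\WC$ by an infinitesimal amount, which changes $\Delta_{B(b,\alpha)}$ and $\Delta_{B(b,C^{-1}\alpha)}$ only by an amount that can be absorbed into the constants in~\ref{numberOfCellIncidentCircles}--\ref{WStarProperties}) avoids it. Because the cells avoid every dividing surface by Lemma~\ref{constDescrComplexCellDecomp}, this produces a genuine partition $\WC=\WC^*\sqcup\bigsqcup_i\WC_i$. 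Property~\ref{WStarProperties} is then immediate: if $\Gamma\in\WC^*$ then $p_\Gamma\in S_{\tilde\Gamma,B(b,\alpha)}$ for some $\tilde\Gamma\in\mathcal D$, and \eqref{surfaceContainmentOne} with the Lipschitz bound on $\Delta_{B(b,\alpha)}$ gives $\Delta_{B(b,\alpha)}(\Gamma,\tilde\Gamma)\lesssim\delta$.

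For~\ref{numberOfCellIncidentCircles} I would fix $i$ and a curve $\Gamma\in\BC$ with $\Delta_{B(b,C^{-1}\alpha)}(\Gamma,\tilde\Gamma)\le C\delta$ for some $\tilde\Gamma\in\WC_i$, so $p_{\tilde\Gamma}\in\Omega_i$. I would first check that the hypothesis~\eqref{centersFarApart} of Lemma~\ref{lemmaDistCompDelta} is automatic here: in a $t$-bipartite pair with $t>C\delta$ one has $d(\Gamma,\tilde\Gamma)>t$ for $\Gamma\in\BC$, $\tilde\Gamma\in\WC$, so $|x_0-\tilde x_0|\le C\delta$ would force the radii to differ by $\gtrsim t$ and hence $\Delta_{B(b,C^{-1}\alpha)}(\Gamma,\tilde\Gamma)\gtrsim t\gg\delta$; therefore $\Delta_{B(b,C^{-1}\alpha)}(\Gamma,\tilde\Gamma)\le C\delta$ implies $|x_0-\tilde x_0|\gg\delta\gtrsim\Delta_{B(b,C^{-1}\alpha)}(\Gamma,\tilde\Gamma)$. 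Then~\eqref{surfaceVerticalDistance} furnishes a curve $\Gamma'$ with $x_0'=\tilde x_0$, $|r_0'-\tilde r_0|\lesssim\delta$, and $p_{\Gamma'}\in S_{\Gamma,B(b,\alpha)}$. Since a cell of a vertical decomposition meets every vertical line in an interval, if $p_{\Gamma'}\notin\Omega_i$ the segment of length $\lesssim\delta$ from $p_{\tilde\Gamma}$ to $p_{\Gamma'}$ must exit $\Omega_i$ across a bounding dividing surface, which would place $p_{\tilde\Gamma}$ within $O(\delta)$ of that surface; absorbing this $O(\delta)$ slack by working throughout with the $\delta$-neighborhoods of the cells (changing all crossing counts only by a bounded factor) would then force $\tilde\Gamma\in\WC^*$, contradicting $\tilde\Gamma\in\WC_i$. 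Hence $S_{\Gamma,B(b,\alpha)}$ meets $\Omega_i$, and Lemma~\ref{randomSamplingLemma}, applied to the same random sample $\mathcal D\subset\BC$, bounds the number of such $\Gamma$ with high probability, which is~\ref{numberOfCellIncidentCircles}.

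The hard part will be this last reconciliation: Lemmas~\ref{constDescrComplexCellDecomp} and~\ref{randomSamplingLemma} speak only of exact incidences between algebraic surfaces and open cells, while ``tangency'' and ``$\Delta$-closeness'' are inherently $O(\delta)$-approximate. Making the translation precise --- tracking the two radii $B(b,\alpha)$ and $B(b,C^{-1}\alpha)$ occurring in the definitions of $S_{\Gamma,\cdot}$ and $\Delta$, verifying that the restriction $|x-x_0|>C\delta$ built into $S_{\Gamma,\cdot}$ is exactly what makes the hypothesis of Lemma~\ref{lemmaDistCompDelta} automatic in the bipartite regime, pinning down the fibrewise-interval structure of vertical cells (compare Corollary~\ref{CylinderCorollary}), and absorbing the $\delta$-fattenings into the constants --- is where essentially all of the work lies; no combinatorial input beyond Lemmas~\ref{constDescrComplexCellDecomp} and~\ref{randomSamplingLemma} is required.
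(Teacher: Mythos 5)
Your overall strategy is the paper's: apply Lemma~\ref{constDescrComplexCellDecomp} to the sample $\mathcal D$, sort $\WC$ according to which cell the parameter point $(x_0,r_0)$ lands in, exhibit a point of $S_{\Gamma,B(b,\alpha)}$ lying on the vertical line through the parameter of a nearby white circle (via Lemma~\ref{lemmaDistCompDelta}/Corollary~\ref{CylinderCorollary}), argue that the short vertical segment stays inside the cell, and then invoke Lemma~\ref{randomSamplingLemma}. That much is right.

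The gap is in how you define $\WC^*$. You put $\Gamma$ into $\WC^*$ only when $p_\Gamma$ lies \emph{exactly} on a dividing surface or vertical wall (and even then dismiss the wall case by genericity), so that $\WC_i$ contains every white circle whose parameter is anywhere in the open cell $\Omega_i$, including points within $O(\delta)$ of a dividing surface. You then notice, correctly, that the argument for Property~\ref{numberOfCellIncidentCircles} breaks in exactly that $O(\delta)$-collar: the vertical segment of length $\lesssim\delta$ from $p_{\tilde\Gamma}$ to the constructed point of $S_{\Gamma,B(b,\alpha)}$ may cross a bounding dividing surface and land in a neighboring cell. Your proposed remedy --- ``work throughout with $\delta$-neighborhoods of the cells, changing all crossing counts only by a bounded factor'' --- is not justified: Lemma~\ref{randomSamplingLemma} bounds the number of $S_{\Gamma,B(b,\alpha)}$ meeting a cell $\Omega_i$, not the number passing within $\delta$ of $\Omega_i$, and the bounded-description-complexity and random-sampling machinery behind it is stated for cells, not for their $\delta$-fattenings. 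There is no a priori bounded-factor comparison between the two counts.

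The paper resolves this the other way around: it puts the fattening into $\WC^*$ rather than into the cells. Specifically, $\WC^* = \{\Gamma\in\WC : \dist(\Gamma, S_{\tilde\Gamma,B(b,\alpha)})\le C\delta\ \text{for some}\ \tilde\Gamma\in\mathcal D\}$, so that every $\tilde\Gamma\in\WC_i$ has $p_{\tilde\Gamma}$ at distance $>C\delta$ from all dividing surfaces. Then the vertical move of length $<C_2\delta$ (with $C_2<C$) cannot cross a dividing surface, and, being vertical, cannot cross a vertical wall either, so the moved point remains in $\Omega_i$ and Lemma~\ref{randomSamplingLemma} applies verbatim. Property~\ref{WStarProperties} still holds for this larger $\WC^*$ by \eqref{distanceControlsDelta}. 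With this single change to your definition, the rest of your argument goes through and matches the paper's.
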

\begin{remark}
The implicit constants appearing above depend only on $\Phi$ and
the probability that a randomly selected $\mathcal D\subset\BC$
has the desired properties. In particular, by worsening the
implicit constants we can make the probability arbitrarily close
to 1.
\end{remark}
\begin{proof}
Perform the cell decomposition of the arrangement $\mathcal D$, as
described in Lemma \ref{constDescrComplexCellDecomp}. Let
\begin{equation}
\mathcal W^*=\{\Gamma\in\WC\colon \dist(\Gamma,S_{\tilde\Gamma,
B(b,\alpha)})\leq C\delta\ \textrm{for some}\
\tilde\Gamma\in\mathcal D\},
\end{equation}
and for each $i=1,\ldots, M$, let
\begin{equation}
\mathcal W_i =\{\Gamma\in\WC\backslash\WC^*\colon \Gamma\in
\overline\Omega_i\}.
\end{equation}
If some $\Gamma$ is present in more than one $\WC_i$, remove it
from all but one of the $\WC_i$ (the choice is irrelevant). We
shall now verify that this decomposition satisfies the properties
claimed in the lemma. Property \ref{numberOfCells} is immediate from
Lemma \ref{constDescrComplexCellDecomp}, and Property
\ref{WStarProperties} follows from \eqref{distanceControlsDelta}. Thus it remains to verify Property \ref{numberOfCellIncidentCircles}. The idea is to show that if $\Gamma\in\BC$ satisfies $\Delta_{B(b,C^{-1}\alpha)}(\Gamma,\tilde\Gamma)\leq C\delta$ for some $\tilde\Gamma\in\WC_i$, then $\Gamma$ must lie in the corresponding cell $\Omega_i$ of the cell decomposition. Once this has been established we can use \eqref{notTooManySurfacesCutACell} to control the number of times this can occur.

Suppose $\Gamma\in\WC_i,\tilde\Gamma\in\BC$ with
$\Delta_{B(b,C^{-1}\alpha)}(\Gamma,\tilde\Gamma)\leq C\delta$. Then
by \eqref{distanceControlsDelta},
\begin{equation*}
\dist(\Gamma,S_{\tilde\Gamma,B(b,\alpha)})\leq C\delta,
\end{equation*}
and so we
can select $\Gamma^\prime\in S_{\tilde\Gamma,B(b,\alpha)}$ with
$d(\Gamma^\prime,\Gamma)\leq C\delta$ (for possibly a larger constant
$C$). Furthermore, since $\WB$ is a $t$--bipartite pair, we have
that $|x_0^\prime-\tilde x_0|\gtrsim t > C\delta$, and thus by Corollary
\ref{CylinderCorollary}, there exists $r_0^{\prime\prime}$ such that
\begin{equation}\label{defOfRelevantCylinder}
\Gamma(x_0^\prime,r_0^{\prime\prime})\in S_{\tilde\Gamma,
B(b,\alpha)}\cap\{(x^{\prime},r^\prime)\colon |x_0^\prime-\tilde
x_0|<C_1\delta, |r^{\prime\prime}-\tilde r_0|<C_2\delta\}.
\end{equation}
However, \eqref{defOfRelevantCylinder}
implies that $|r_0^{\prime\prime}-r_0|<C_2\delta$, and selecting constants
appropriately in the definition of $\WC^*$, this is less than
$\dist(\Gamma,S_{\Gamma^{\prime\prime\prime},B(b,\alpha)})$ for any
$\Gamma^{\prime\prime\prime}\in\mathcal D.$ Since the boundary of each cell
$\Omega$ consists only of dividing surfaces $S_{\Gamma^{\prime\prime\prime},
B(b,\alpha)}$ and vertical manifolds (2--dimensional surfaces that can be written as unions of vertical line segments), we conclude that
$(x_0^\prime,r_0^{\prime\prime})\in\Omega_i$, and thus
\begin{equation*}
S_{\tilde\Gamma,B(b,\alpha)}\cap \Omega_i\neq\emptyset.
\end{equation*}
Equation
\eqref{notTooManySurfacesCutACell} bounds the number of dividing
surfaces that can intersect each cell $\Omega_i$, and this in turn
gives us Property \ref{numberOfCellIncidentCircles}.
\end{proof}
\begin{lemma}\label{lemma113}
With high probability,
\begin{equation}
\#\mathcal W^*\lesssim\frac{n\# \tilde{\mathcal
I}_{B(b,\alpha)}(\mathcal W,\mathcal B)}{N}.
\end{equation}
\end{lemma}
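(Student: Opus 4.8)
The plan is to carry out the routine random-sampling expectation computation, the only substantive input being the translation of membership in $\WC^*$ into membership in $\tilde{\mathcal I}_{B(b,\alpha)}$ via Lemma \ref{lemmaDistCompDelta}. Recall from the proof of Lemma \ref{cellDecompLemma3} that $\WC^*$ consists of those $\Gamma\in\WC$ for which $\dist(\Gamma,S_{\tilde\Gamma,B(b,\alpha)})\leq C\delta$ for some dividing circle $\tilde\Gamma\in\mathcal D\subset\BC$. Since $\Delta_X$ is symmetric in its two curve arguments, the left inequality in \eqref{distanceControlsDelta} (with $\tilde\Gamma$ playing the role of $\Gamma$) gives $\Delta_{B(b,\alpha)}(\Gamma,\tilde\Gamma)\lesssim\dist(\Gamma,S_{\tilde\Gamma,B(b,\alpha)})\leq C\delta$; choosing the absolute constant in the definition of $\tilde{\mathcal I}_{B(b,\alpha)}$ large enough we conclude that $\Gamma\in\WC^*$ forces $(\Gamma,\tilde\Gamma)\in\tilde{\mathcal I}_{B(b,\alpha)}$ for some $\tilde\Gamma\in\mathcal D$. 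Consequently, setting $d_\Gamma=\#\{\tilde\Gamma\in\BC\colon(\Gamma,\tilde\Gamma)\in\tilde{\mathcal I}_{B(b,\alpha)}\}$, the event $\{\Gamma\in\WC^*\}$ is contained in the event that the random sample $\mathcal D$ meets a fixed subset of $\BC$ of size $d_\Gamma$.

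Next I would estimate the relevant probabilities and take expectations. Since $\mathcal D$ is obtained from $N$ uniform draws from $\BC$ (with replacement, per Remark \ref{randomSelectionRem}; distinctness of the draws is irrelevant for what follows), a union bound over the $N$ draws gives that the probability that $\mathcal D$ meets a fixed set of size $d_\Gamma$ is at most $Nd_\Gamma/n$, hence $\Pr[\Gamma\in\WC^*]\leq Nd_\Gamma/n$. Summing over $\Gamma\in\WC$ and using $\sum_{\Gamma\in\WC}d_\Gamma=\#\tilde{\mathcal I}_{B(b,\alpha)}(\WC,\BC)$ together with $N<C^{-1}n<n$ yields
\[
\mathbb{E}\big[\#\WC^*\big]\leq\frac{N}{n}\,\#\tilde{\mathcal I}_{B(b,\alpha)}(\WC,\BC)\leq\frac{n}{N}\,\#\tilde{\mathcal I}_{B(b,\alpha)}(\WC,\BC).
\]
Markov's inequality then converts this into the desired high-probability statement: for any $P<1$, if the implicit constant in the conclusion is taken to be $(1-P)^{-1}$ times the implicit constant above, the bound fails with probability at most $1-P$, which is the sense of ``high probability'' fixed in Remark \ref{highProbRem} (and may be combined by the union bound there with the other high-probability properties of $\mathcal D$ used elsewhere).

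I do not expect a genuine obstacle here; the one point that needs care is the bookkeeping of balls and constants. Specifically, one must note that the ``small'' side of \eqref{distanceControlsDelta} involves $\Delta$ taken over the larger ball $B(b,\alpha)$ --- precisely the ball appearing in $\tilde{\mathcal I}_{B(b,\alpha)}$ --- so that the passage from $\dist(\Gamma,S_{\tilde\Gamma,B(b,\alpha)})\leq C\delta$ to a genuine element of $\tilde{\mathcal I}_{B(b,\alpha)}$ loses nothing, and one must choose the various absolute constants (the $C$ in the definition of $\WC^*$, the constant in the definition of $\tilde{\mathcal I}$, and the implicit constant in \eqref{distanceControlsDelta}) consistently so that the implication used in the first paragraph is valid.
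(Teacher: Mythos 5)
Your proof is correct and follows essentially the same route as the paper: translate $\Gamma\in\WC^*$ into membership in $\tilde{\mathcal I}_{B(b,\alpha)}$ via \eqref{distanceControlsDelta} (which is exactly what Property \ref{WStarProperties} of Lemma \ref{cellDecompLemma3} records), then do the expectation computation and apply Markov. In fact your bookkeeping yields the sharper $\mathbb{E}[\#\WC^*]\leq\frac{N}{n}\#\tilde{\mathcal I}_{B(b,\alpha)}$, which, since $N<n$, implies the stated $\frac{n}{N}$ bound a fortiori; this is preferable to the paper's terse proof, whose displayed probability bound contains a $\frac{n}{N}$ vs.\ $\frac{N}{n}$ slip.
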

\begin{proof}
This follows from Property \ref{WStarProperties} of Lemma
\ref{cellDecompLemma3}. Indeed, the probability of a given
$\Gamma\in\WC$ being in $\WC^*$ is bounded by
\begin{equation*}
\frac{n}{N}\{\tilde\Gamma\in\BC\colon\Delta_{B(b,\alpha)}(\Gamma,\tilde\Gamma)<C\delta\},
\end{equation*}
so the expected size of $\WC^*$ is $\frac{n\# \tilde{\mathcal
I}_{B(b,\alpha)}(\mathcal W,\mathcal B)}{N}$, from which the
result follows.
\end{proof}
\begin{definition}
We define a \emph{cluster} of $\Phi$--circles analogously to
Wolff's definition in \cite{Wolff4}: A cluster is a subset
$\mathcal C\subset\WC$ (or $\BC)$ with the property that there
exists a $\dt$--rectangle $R$ such that every
$\Gamma\in\mathcal C$ is tangent to a $\dt$--rectangle
comparable to $R$.
\end{definition}
\begin{lemma}\label{cardinalityOfClusterLemma}
Let $\mathcal C\subset\WC$ be a cluster and let $\Gamma\in\BC$.
Then then any set of pairwise incomparable
$\dt$--rectangles each of which is tangent to some circle
in $\mathcal C$ and to $\Gamma$ has cardinality $O(1)$.
\end{lemma}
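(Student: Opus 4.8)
The plan is to adapt the proof of the analogous statement, Lemma~1.14 of \cite{Wolff4}, replacing the elementary facts about circle intersections with the consequences of cinematic curvature recorded above. As in the preceding lemmas I would first localize: summing over $O(1)$ translates $b'$ of $b$ and using that any $\dt$--rectangle tangent to $\Gamma$ lies in the $C_1\delta$--neighborhood of $\Gamma$, it suffices to bound the number of pairwise incomparable rectangles lying in a fixed ball $B(b',C^{-2}\alpha)$. Let $\mathcal C'\subset\mathcal C$ consist of the circles of the cluster that, together with $\Gamma$, are tangent to one of the rectangles being counted; by Lemma~\ref{lemma15}\ref{lemma15Item2} every $\Gamma'\in\mathcal C'$ satisfies $\Delta_{B(b,\alpha)}(\Gamma',\Gamma)\lesssim\delta$, and by hypothesis every circle of $\mathcal C$ is tangent to a $\dt$--rectangle comparable to a fixed rectangle $R$, which I take to have center $p_0$ and associated normal direction $n_R$.

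I would split into two cases depending on whether $\Gamma$ is itself tangent to $R$ --- say, whether $\Gamma$ passes within $C_2\delta$ of $p_0$ with a normal within $C_2\sqrt{\delta/t}$ of $\pm n_R$. If it is, then for any $\Gamma'\in\mathcal C$ sharing a rectangle with $\Gamma$ we have $\Delta_{B(b,C^{-1}\alpha)}(\Gamma,\Gamma')\lesssim\delta\ll t\sim|x_0-x_0'|$, so Property~\ref{tangencyPoint} applies; since both $\Gamma$ and $\Gamma'$ have normal $\approx\pm n_R$ near $p_0$, the unique parallel--normals point $\xi$ must lie within $O(\sqrt{\delta/t})$ of $p_0$, and then \eqref{intersectionContainedInBall} gives $\Gamma\cap\Gamma'\cap B(b,C^{-2}\alpha)\subset B(p_0,C\sqrt{\delta/t})$. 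Hence every rectangle tangent to both $\Gamma$ and $\Gamma'$ is contained in a fixed $O(\sqrt{\delta/t})$--ball and is comparable to $R$, so a pairwise incomparable family of such rectangles has $O(1)$ members.

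If $\Gamma$ is not tangent to $R$, I would instead prove $\#\mathcal C'=O(1)$; the conclusion then follows by applying Lemma~\ref{lemma116} to each of the $O(1)$ circles of $\mathcal C'\subset\WC$ paired with $\Gamma\in\BC$. Since all circles of $\mathcal C$ pass through $R$ with normal $\approx n_R$, after straightening out $\Phi$ (Property~\ref{straighteningOut}) and using \eqref{equivalentCinematicCurvatureConditionOne} they form, up to resolution $\delta$, a one--parameter family, each member of which is $\Gamma(x_0(r_0),r_0)$ for a suitable center $x_0(r_0)$, indexed by the radius $r_0$, whose values are $\delta$--separated. Membership in $\mathcal C'$ forces $\Delta_{B(b,\alpha)}(\cdot,\Gamma)\lesssim\delta$, and the key point is that the function $r_0\mapsto\Delta_{B(b,\alpha)}\big(\Gamma(x_0(r_0),r_0),\Gamma\big)$ varies at unit rate on the set where it is $\lesssim\delta$ --- this is a quantitative version of the fact that two $\Phi$--circles cannot be tangent to second order, proved using Property~\ref{tangencyPoint} together with the second--derivative bound \eqref{equivalentCinematicCurvatureConditionOne}, and it is exactly here that ``$\Gamma$ not tangent to $R$'' is used, since otherwise the derivative can vanish. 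Consequently the radii of the circles in $\mathcal C'$ all lie in an interval of length $\lesssim\delta$, so $\#\mathcal C'=O(1)$.

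Two routine points would be folded in throughout: pairwise incomparable $\dt$--rectangles are pairwise disjoint and, when tangent to two circles with $d(\cdot,\cdot)\sim t$, are confined to a set of diameter $\lesssim\sqrt{\delta/t}$ by \eqref{diameterControl}; and the $O(1)$ circles of $\mathcal C'$ lying within parameter--distance $C\delta$ of a chosen reference circle contribute only $O(1)$ rectangles by Lemma~\ref{lemma116}, so they may be discarded before running the above dichotomy. The step I expect to be the main obstacle is the non--degeneracy claim in the third paragraph: making precise, via cinematic curvature, that a one--parameter family of $\Phi$--circles tangent to a common rectangle is tangent to a fixed external $\Phi$--circle only for an $O(\delta)$--range of the family parameter unless the external circle is itself tangent to that rectangle. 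This is the analogue of the circle--geometry input in \cite{Wolff4}, and the derivative estimates needed for it are those of \cite{Wolff3}.
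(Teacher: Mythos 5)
Your Case B does not work as stated, and I think it is not merely incomplete but actually wrong: the claim that $\Gamma$ not being tangent to $R$ forces $r_0\mapsto\Delta_{B(b,\alpha)}(\Gamma(x_0(r_0),r_0),\Gamma)$ to vary at unit rate (hence $\#\mathcal C'=O(1)$) is false already for honest circles. Take $\Phi(x,y)=|x-y|$, let the cluster $\mathcal C$ consist of circles passing through a fixed $p_0$ with a common normal $n_R$ there, so that $x_0(s)=p_0-sn_R$ with radius $s$; and take $\Gamma$ with center $q=p_0-(\rho+\epsilon)n_R+\eta n_R^\perp$ and radius $\rho$, with $\epsilon\sim\delta^{1/2}$ and $\eta\sim\delta^{1/4}$ (or $\eta\sim\sqrt{\epsilon\tau}$ so the relevant radii stay in $(1-\tau,1)$). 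Then $\Gamma$ passes at distance $\sim\delta^{1/2}\gg\delta$ from $p_0$ with normal $\sim\eta\gg\sqrt{\delta/t}$ from $n_R$, so $\Gamma$ is not tangent to $R$; yet a direct computation gives $\Delta(\Gamma(s),\Gamma)\approx\big|\eta^2/(2(s-\rho))-\epsilon\big|$, whose derivative near the tangency $s^{**}=\rho+\eta^2/(2\epsilon)$ is $\sim\epsilon^2/\eta^2\sim\delta^{1/2}$, not $\sim1$. Consequently $\#\mathcal C'\sim\delta^{-1/2}$. The lemma is still true here (the tangency point $\xi(W_s,\Gamma)$ barely moves over this range of $s$, so all resulting rectangles are mutually comparable), but your route — bound $\#\mathcal C'$ by $O(1)$ and then apply Lemma~\ref{lemma116} circle by circle — collapses, because the rectangle count is not controlled by $\#\mathcal C'$. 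Your Case A is essentially fine (and in fact can be run cleanly from \eqref{diameterControl} alone, since $p_0$ lies in $\Gamma^{C\delta}\cap\Gamma'^{C\delta}$ for all the relevant pairs), but the dichotomy as a whole does not close.

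This is also not the route the paper (following Wolff) takes: the intended proof runs through Lemma~\ref{lemma19}, which you do not invoke at all. The mechanism there is an \emph{area} argument rather than a \emph{cardinality} argument: by Lemma~\ref{lemma19}\ref{lemma19Item1} the entire cluster is confined to an $O(\delta)$-neighborhood of the ``inside'' $\{\Phi(x_0^{(0)},y)\le r_0^{(0)}\}$ of a single extremal cluster circle $\Gamma_0$, so every rectangle you are counting is tangent to $\Gamma$ and meets that neighborhood; Lemma~\ref{lemma19}\ref{lemma19Item2} (ultimately via \eqref{areaControl} and disjointness of incomparable rectangles) then caps the number of such rectangles at $O(1)$ directly, with no need to control how many cluster circles are involved. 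The counterexample above shows precisely why one must argue at the level of the rectangles (or of the intersection set $\Gamma^{C\delta}\cap\Gamma_0^{C\delta}$) rather than at the level of the circles: in the nearly-degenerate regime the circles are numerous but the rectangles remain confined. If you want to rescue a dichotomy-style proof, the quantity to threshold on and propagate is the location of the common tangency region relative to $p_0$, not the size of $\mathcal C'$.
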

\begin{remark}
Lemma \ref{lemma19} is used to prove this lemma. See Lemma 1.14 of \cite{Wolff4} for details.
\end{remark}
\begin{lemma}\label{lemma115}
Given a value of $\mu_0$, we can write
\begin{equation}
\WC=\WC_g\sqcup\WC_b,
\end{equation}
where
\begin{enumerate}[label=(\roman{*}), ref=(\roman{*})]
\item\label{lemma115Prop1} $\WC_g$ and $\BC$ have no
$\dt$--rectangles of type
$(\gtrsim\mu_0,\gtrsim1)$.%
\item\label{lemma115Prop2} $\WC_b$ is the union of
$\lesssim\frac{\#\WC}{\mu_0}(\log m)(\log n)$ clusters.
\end{enumerate}
\end{lemma}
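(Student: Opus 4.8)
The plan is to build $\WC_b$ by greedily extracting clusters, in the spirit of Wolff's proof of the corresponding Lemma~1.15 in \cite{Wolff4}. Set $\WC^{(0)}=\WC$. Having constructed $\WC^{(s)}$, ask whether there is a $\dt$--rectangle $R_{s+1}\subset B(b,\alpha)$ of type $(\gtrsim\mu_0,\gtrsim1)$ relative to $(\WC^{(s)},\BC)$ (which is again a $t$--bipartite pair); that is, a rectangle incident to at least $C\mu_0$ curves of $\WC^{(s)}$ and to at least $C$ curves of $\BC$. If no such rectangle exists, stop, put $\WC_g=\WC^{(s)}$ and $\WC_b=\WC\setminus\WC_g$. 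Otherwise let $\mathcal C_{s+1}$ be the collection of curves in $\WC^{(s)}$ incident to $R_{s+1}$, put $\WC^{(s+1)}=\WC^{(s)}\setminus\mathcal C_{s+1}$, and repeat. Since $\#\mathcal C_{s+1}\ge C\mu_0\ge1$ we have $\#\WC^{(s+1)}\le\#\WC^{(s)}-C\mu_0$, so the process terminates after at most $\#\WC/(C\mu_0)$ steps, and $\WC_b$ is the disjoint union of the extracted collections $\mathcal C_1,\dots,\mathcal C_s$.

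Next I would verify the two conclusions. Property~\ref{lemma115Prop1} is immediate from the stopping rule together with the observation that incidences against $\BC$ never change during the iteration (we only delete curves from $\WC$), so that when the process halts $(\WC_g,\BC)$ has no $\dt$--rectangle of type $(\gtrsim\mu_0,\gtrsim1)$. For Property~\ref{lemma115Prop2} the point is that each $\mathcal C_{s+1}$ is a cluster: every curve $\Gamma\in\mathcal C_{s+1}$ is incident to the single rectangle $R_{s+1}$, i.e.\ $R_{s+1}$ lies in the $C_1\delta$--neighborhood of $\Gamma$, and the $\delta$--neighborhood $R_\Gamma$ of the arc of $\Gamma$ closest to $R_{s+1}$ is then a $\dt$--rectangle tangent to $\Gamma$ with $R_\Gamma$ and $R_{s+1}$ both contained in a common $(C_0\delta,t)$--rectangle, provided $C_0$ is chosen large relative to $C_1$. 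Hence $R_\Gamma$ is comparable to $R_{s+1}$ and $\mathcal C_{s+1}$ is a cluster with witness rectangle $R_{s+1}$. As the $\mathcal C_s$ are pairwise disjoint subsets of $\WC$ and there are at most $\#\WC/(C\mu_0)$ of them, $\WC_b$ is a disjoint union of $\lesssim\#\WC/\mu_0$ clusters, which is in particular $\lesssim\frac{\#\WC}{\mu_0}(\log m)(\log n)$.

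The one place that needs care is the constant bookkeeping in the second paragraph: one must reconcile the constant $C_1$ in the definition of ``incident'' (Definition~\ref{defnOfARectangle}), the constant $C_0$ in the definition of ``comparable'', and the absolute constant $C$ counting incident curves in the definition of type $(\gtrsim\mu_0,\gtrsim1)$, so that the extracted sets are simultaneously clusters \emph{and} of size $\gtrsim\mu_0$, and so that the stopping condition is exactly the negation of Property~\ref{lemma115Prop1}. Beyond this I anticipate no obstacle; in fact the greedy argument already delivers a genuinely disjoint union of $\lesssim\#\WC/\mu_0$ clusters, so the two logarithmic factors in the statement are slack. (If one prefers to imitate the organization of \cite{Wolff4} directly, one first pigeonholes the candidate witness rectangles dyadically by the number $\sim2^j\ge c\mu_0$ of incident white curves and the number $\sim2^k\ge c$ of incident black curves, runs the extraction above within each of the $\lesssim(\log m)(\log n)$ resulting classes, and sums the $\lesssim\#\WC/\mu_0$ clusters obtained in each class; Lemma~\ref{lemma110} and Lemma~\ref{cardinalityOfClusterLemma} are then available to keep these clusters under control.)
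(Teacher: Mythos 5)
Your proposal is correct, and in fact yields a strictly stronger conclusion than the lemma asks for: the greedy extraction realizes $\WC_b$ as a \emph{disjoint} union of $\lesssim\#\WC/\mu_0$ clusters, with no logarithmic loss. The paper omits the proof of this lemma, implicitly deferring to Wolff's Lemma~1.15 in \cite{Wolff4}, which proceeds by dyadic pigeonholing in the type $(\sim 2^j,\sim 2^k)$ and invokes the K\H{o}vari--S\'os--Tur\'an bound of Lemma~\ref{lemma110} within each of the $\lesssim(\log m)(\log n)$ dyadic classes---this is exactly the argument your final parenthetical reconstructs, and it is the source of the logarithmic factors. Your greedy route avoids them because the monotonicity it needs is automatic: deleting curves from $\WC^{(s)}$ against the fixed $\BC$ can only destroy, never create, rectangles of type $(\gtrsim\mu_0,\gtrsim 1)$, so the stopping condition is precisely the negation of Property~\ref{lemma115Prop1}, and since each extracted $\mathcal{C}_{s+1}$ has cardinality $\geq C\mu_0$ the process halts after $\lesssim\#\WC/\mu_0$ steps. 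The constant bookkeeping you flag is the right thing to worry about but is not an obstacle; in fact the cluster property of $\mathcal{C}_{s+1}$ with witness $R_{s+1}$ is immediate, since every $\Gamma\in\mathcal{C}_{s+1}$ is by construction tangent to $R_{s+1}$ itself, and $R_{s+1}$ is trivially comparable to itself, so the auxiliary rectangle $R_\Gamma$ you build is not even needed. The extra strength (disjointness, no logs) is harmless here since the lemma is only used with $\mu_0=(mn)^{1/4}$ to derive \eqref{goodCircleControlOne}--\eqref{goodCircleControlTwo}, where a $(\log m)(\log n)$ loss is already absorbed into $(mn)^{\epsilon}$.
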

\begin{lemma}\label{biPartitePairRectControlLemma}
Let $\WB$ be a $t$--bipartite pair with $m=|\mathcal W|,\
n=|\mathcal B|$. Let $\mathcal R$ be a set of pairwise
incomparable $(\geq\mu,\geq\nu)$ $\dt$--rectangles contained in $B(b,\alpha).$

For any $\epsilon>0$,
\begin{equation}\label{biPartitePairRectControl}
\#\mathcal R\lesssim_\epsilon
%|\log\delta|^{C^\prime_\epsilon}
(mn)^{\epsilon}\Big(\Big(\frac{mn}{\mu\nu}\Big)^{3/4}+\frac{m}{\mu}+\frac{n}{\nu}\Big).
\end{equation}
\end{lemma}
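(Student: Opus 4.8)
The plan is to transcribe Wolff's proof of the circle-tangency incidence bound (Lemma 1.16 of \cite{Wolff4}), substituting the parameter-space vertical cell decomposition of Lemmas \ref{constDescrComplexCellDecomp}--\ref{cellDecompLemma3} for the arrangement-of-arcs cell decomposition used there. Lemmas \ref{lemma15}--\ref{lemma115} above were arranged to be the exact analogues of Lemmas 1.5--1.15 of \cite{Wolff4}, so with those in hand the remaining argument is, as anticipated in the introduction, essentially formal.

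First I would carry out the standard reductions. Dyadic pigeonholing over dyadic $\mu'\ge\mu$ and $\nu'\ge\nu$ reduces the problem to bounding the number of pairwise incomparable rectangles of type $(\sim\mu,\sim\nu)$, at the cost of an $O((\log m)(\log n))$ factor that will be absorbed into $(mn)^\epsilon$ at the end. Summing over the $O(1)$ translates $b+t_i$ of the various balls (as in the discussion following \eqref{defOfDeltaX} and in the proof of Proposition \ref{schlagsThm}) lets one pass freely between $B(b,\alpha)$, $B(b,C^{-1}\alpha)$ and $B(b,C^{-2}\alpha)$, so that Lemma \ref{lemma15}, Property \ref{intersectionSizeBounds}, Lemma \ref{lemmaDistCompDelta} and Corollary \ref{CylinderCorollary} may be invoked on whichever ball is convenient. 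I would then run an induction on $\#\WC+\#\BC$ (with the dyadic parameters $\mu,\nu$ along for the ride). The contributions $m/\mu$ and $n/\nu$, together with the regimes where $\mu$ or $\nu$ is bounded or one of $m,n$ is small, are dispatched exactly as in \cite{Wolff4} using the counting Lemma \ref{lemma110}, the no-concentrated-rectangles estimate Lemma \ref{lemma111}, and the cluster decomposition Lemma \ref{lemma115} together with the cluster-rigidity Lemma \ref{cardinalityOfClusterLemma}; this leaves only the genuinely two-dimensional-incidence term $(mn/\mu\nu)^{3/4}$.

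For that term the engine is the random cell decomposition. Apply Lemma \ref{cellDecompLemma3} to $\WB$ with a sampling parameter $N$ to be chosen: with high probability it produces $\WC=\WC^*\sqcup\bigsqcup_{i=1}^M\WC_i$ with $M\lesssim N^3\log N$ and, for each $i$, a set $\BC_i:=\{\Gamma\in\BC:\Delta_{B(b,C^{-1}\alpha)}(\Gamma,\tilde\Gamma)\le C\delta\text{ for some }\tilde\Gamma\in\WC_i\}$ of cardinality $\lesssim (n\log n)/N$; moreover each $(\WC_i,\BC_i)$ is again a $t$--bipartite pair (the $d$-separation conditions being inherited). The geometric point that makes this useful is a localization: if $R$ is a $\dt$--rectangle of type $(\sim\mu,\sim\nu)$ and $\Gamma\in\WC_i$ is tangent to $R$, then by Lemma \ref{lemma15}.\ref{lemma15Item2} every $\Phi$--circle of $\BC$ tangent to $R$ is $\delta$--tangent to $\Gamma$ and therefore lies in $\BC_i$, so the entire black incidence structure of $R$ is captured by $(\WC_i,\BC_i)$ for any cell whose $\WC_i$ meets the white circles tangent to $R$. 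Consequently $\mathcal R$ splits as (a) a family $\mathcal R^*$ of rectangles a $\gtrsim1$ fraction of whose tangent white circles lie in $\WC^*$, controlled by applying the inductive hypothesis to the $t$--bipartite pair $(\WC^*,\BC)$ after bounding $\#\WC^*$ via Lemma \ref{lemma113}, and (b) the families $\mathcal R_i$ of rectangles tangent to a white circle of $\WC_i$, each a pairwise-incomparable family of $(\gtrsim\mu_i,\gtrsim\nu)$--rectangles for $(\WC_i,\BC_i)$ (with $\mu_i$ the number of their tangent whites in $\WC_i$) and so controlled by the inductive hypothesis with the black count reduced from $n$ to $\lesssim(n\log n)/N$. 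Summing the $M\lesssim N^3\log N$ cell contributions against the $\WC^*$ contribution, optimizing $N$ (roughly $N\sim(mn/\mu\nu)^{1/4}$ up to logarithms, so that the $N^3$ cells each carry a factor $\sim(m\cdot n/(N\mu\nu))^{3/4}$), and iterating to the base case yields $\#\mathcal R\lesssim_\epsilon(mn)^\epsilon(mn/\mu\nu)^{3/4}$.

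The main obstacle is not any individual geometric estimate---those are furnished wholesale by the lemmas of Sections \ref{sectionCinematicCurvImplications} and \ref{sectionCountingIncidences}---but the combinatorial bookkeeping of the recursion: one must interleave the cell decomposition and the cluster decomposition so that the induction on $\#\WC+\#\BC$ actually terminates, track how a rectangle's $\sim\mu$ tangent white circles distribute among $\WC^*$ and the cells so that the sub-problems inherit useful thresholds $\mu_i$, and verify that the polylogarithmic factor accrued at each of the $\lesssim\log(mn)$ levels does not escape the final $(mn)^\epsilon$. All of this is carried out for circles in \cite{Wolff4}; since Lemma \ref{cellDecompLemma3} has precisely the input/output shape of the planar cell decomposition used there, and Lemmas \ref{lemma15}--\ref{lemma115} supply every other ingredient in the same form, Wolff's argument transfers essentially verbatim, which is exactly the reduction promised in the Proof Sketch of the introduction.
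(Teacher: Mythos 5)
Your proposal follows the paper's proof in essentially every structural detail: reduce (via pigeonholing/random sampling) to a base type $(\gtrsim 1,\gtrsim 1)$ or $(\sim\mu,\sim\nu)$, dispatch the $m/\mu$, $n/\nu$ and small-$m$ or small-$n$ regimes with Lemmas~\ref{lemma110}, \ref{lemma111}, \ref{lemma115}, \ref{cardinalityOfClusterLemma}, and attack the $(mn/\mu\nu)^{3/4}$ term by running the random vertical cell decomposition of Lemma~\ref{cellDecompLemma3} on the remaining ``good'' circles, bounding $\WC^*$ via Lemma~\ref{lemma113}, applying the inductive hypothesis in each cell to the localized pair $(\WC_i,\BC_i)$ with $\#\BC_i\lesssim (n\log n)/N$, and summing over the $\lesssim N^3\log N$ cells with H\"older; the translate bookkeeping to shuttle between $B(b,\alpha)$, $B(b,C^{-1}\alpha)$, $B(b,C^{-2}\alpha)$ is likewise the same. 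The only cosmetic difference is that the paper first reduces all the way to $\mu=\nu=1$ via the random-sampling argument from page~1253 of~\cite{Wolff4} and then inducts on $mn$, whereas you propose carrying the thresholds $\mu,\nu$ through the recursion as parameters; both are standard bookkeeping variants of Wolff's Lemma~1.16, so this is the same proof.
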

In order to prove Lemma \ref{biPartitePairRectControlLemma}, it
suffices to consider the case where $\mu=\nu=1$ and establish the
bound
\begin{equation}\label{biPartitePairRectControlSpecialCase}
\#\mathcal R\lesssim_\epsilon
%|\log\delta|^{C^\prime_\epsilon}
(mn)^{\epsilon}\Big((mn)^{3/4} + m\log n + n\log m\Big).
\end{equation}
To obtain \eqref{biPartitePairRectControl} from
\eqref{biPartitePairRectControlSpecialCase} we apply a random
sampling argument. The details of this random sampling argument
are on page 1253 of \cite{Wolff4}, so we shall not reproduce them here.
We shall call the $\Phi$--circles $\Gamma\in\mathcal W$ ``white''
$\Phi$--circles and those in $\mathcal B$ ``black''
$\Phi$--circles. By Lemma \ref{lemma116}, each pair
$(\Gamma,\tilde\Gamma)\in\mathcal\WB$ of white and black
$\Phi$--circles are jointly incident to at most $O(1)$
incomparable $\dt$--rectangles, so $\#\mathcal R\lesssim
mn$. Thus if $(mn)^{1/C}<\log(mn)$ then
\eqref{biPartitePairRectControlSpecialCase} holds immediately
(with an implicit constant depending on $C$). Thus we may assume
\begin{equation}\label{mnBig}
(mn)^{1/C}>\log(mn)
\end{equation}
for some fixed choice of $C$ which will be determined below.

We shall closely follow \cite{Wolff4} and substitute our lemmas
above for Wolff's analogous ones. Wolff's induction argument
allows him to control the number of incomparable
$\dt$--rectangles of type $(\gtrsim1,\gtrsim1)$ relative to
a collection $\WB$ over the region $B(b,\alpha)$ if he has similar
control over smaller collections $(\WC^\prime,\BC^\prime)$. Our
argument will allow us to control the number of incomparable
$(\gtrsim1,\gtrsim1)$ rectangles in a small region
$B(b,C^{-1}\alpha)$ if we have control over the number of
incomparable rectangles in a much larger region $B(b,\alpha),$ but
luckily we only require this control for smaller collections of
circles. Since the control is uniform in $b$, we can apply this
result to finitely many translates $\{b+t_i\}$ of $b$ to recover
the result over the larger region $B(b,\alpha)$, which allows us
to iterate the induction step. We shall focus on the key steps
where our arguments differ from Wolff's, and refer readers to
\cite{Wolff4} for the details of those arguments which are
identical.

To simplify our notation, we will employ the following definition:
\begin{definition}
For $\WB$ a $t$--bipartite pair and $X\subset\RR^2$, define $\mathcal R_X(\WC,\BC)$ to be the maximum possible cardinality of a set of pairwise incomparable rectangles of type $(\geq 1,\geq 1)$ that are contained in the set $X$.  
\end{definition}

Assume \eqref{biPartitePairRectControlSpecialCase} holds for all
pairs $(\WC^\prime,\BC^\prime)$ with
$(\#\WC^\prime)(\#\BC^\prime)<mn/2$. The base case of the
induction is taken care of by \eqref{mnBig}.

If $m\leq n^{\frac{1}{3}+\epsilon}$ or vice versa, then Lemma
\ref{biPartitePairRectControlLemma} follows from Lemma
\ref{lemma110}. Thus we may assume
\begin{equation}\label{mAndNComparable}
m^{1/3+\epsilon}<n<m.
\end{equation}
Let $\WC=\WC_g\cup\WC_b,\ \BC=\BC_g\cup\BC_b$ be the decomposition
from Lemma \ref{lemma115} with $\mu_0=\nu_0=(mn)^{1/4}.$ From
property \ref{lemma115Prop2} of the decomposition and Lemma \ref{cardinalityOfClusterLemma}, we have
\begin{align}
{\mathcal R}_{B(b,\alpha)}(\WC_b,\BC)<\log m\log
n(mn)^{3/4},\label{goodCircleControlOne}\\
{\mathcal R}_{B(b,\alpha)}(\WC,\BC_b)<\log m\log
n(mn)^{3/4}\label{goodCircleControlTwo}.
\end{align}
These quantities are $<\frac{1}{1000}(mn)^{\epsilon}(mn)^{3/4}$
provided that we choose the appropriate constant $C$ in
\eqref{mnBig}.

We shall now obtain the bound
\begin{equation}\label{controlOfBadInteractionsSmallCircle}%
{\mathcal R}_{B(b^\prime,C^{-1}\alpha)}(\WC_g,\BC_g)\leq
C_\epsilon (mn)^\epsilon (mn)^{3/4} C_0^{-1},
\end{equation}
where we can make $C_0$ arbitrarily large at the cost of
increasing $C_\epsilon$. Furthermore, this bound will be
independent of the choice of $b^\prime\in B(b,\alpha)$. Thus we
shall apply \eqref{controlOfBadInteractionsSmallCircle} with
$b^\prime = b+t_i$ for $\{t_i\}$ a finite family of translates
such that for every point $x\in B(b,\alpha),$ there exists an index $i$ such that $x$ is contained in $B(b+t_i,C^{-1}\alpha)$ and is distance at least $Ct$ from the boundary, and thus any $\dt$--rectangle contained in $B(b,\alpha)$ is contained in some $B(b+t_i,C^{-1}\alpha)$. We thus have
\begin{equation}
{\mathcal R}_{B(b,\alpha)}(\WC_g,\BC_g)\leq\sum
{\mathcal R}_{B(b+t_i,C^{-1}\alpha)}(\WC_g,\BC_g).
\end{equation}
Thus if we apply
\eqref{controlOfBadInteractionsSmallCircle} for each $t_i$ and
select $C_0$ sufficiently large we obtain
\begin{equation}\label{controlOfBadInteractionsAllCircle}%
{\mathcal R}_{B(b,\alpha)}(\WC_g,\BC_g)\leq \frac{1}{1000}C_\epsilon
(mn)^\epsilon (mn)^{3/4}.
\end{equation}
Combining \eqref{controlOfBadInteractionsAllCircle},
\eqref{goodCircleControlOne}, and \eqref{goodCircleControlTwo} and
using Lemma \ref{lemma117} we obtain
\eqref{biPartitePairRectControlSpecialCase}. It thus suffices to
prove \eqref{controlOfBadInteractionsSmallCircle}.

Write $\WC_g=\WC_g^*\sqcup\bigsqcup_1^M\WC_g^i$ as given by Lemma
\ref{cellDecompLemma3}, with $\alpha$ replaced by $C^{-1}\alpha$
and selecting a value of $N$ satisfying
\begin{equation}\label{controlOfN}
C\log(mn)^{1/\epsilon}< N <C^{-1}\min\Big(n^{3/4}m^{-1/4}\log(mn),
m^{1/4}n^{-1/12}\log(mn)\Big).
\end{equation}
Such a value of $N$ exists by assumption \eqref{mAndNComparable}
and by selecting a sufficiently large constant in \eqref{mnBig}.

We claim:
\begin{equation}\label{controlOfWgStar}
\#\WC_g^*\leq\frac{1}{1000C_0}\#\WC_b.
\end{equation}
Indeed, $(\mathcal \WC_g,\BC_g)$ contain no
$\dt$--rectangles of type $(\gtrsim \mu_0,\gtrsim 1)$ or
\mbox{$(\gtrsim 1,$} \mbox{$\gtrsim\nu_0)$} so by Lemma
\ref{lemma111} (with $\delta$ replaced by $C\delta$ for a suitable
constant $C$),
\begin{equation*}
\#\tilde{\mathcal I}_{B(b,C^{-1}\alpha)}(\WC_g,\BC_g)\lesssim
m^{5/4}n^{1/4}\log m + m^{3/4}n^{13/12}\log n,
\end{equation*}
and thus by Lemma \ref{lemma113} (recall that now $\alpha$ is
replaced by $C^{-1}\alpha$ and $C^{-1}\alpha$ is replaced by
$C^{-2}\alpha$) we can select our decomposition of $\WC_g$ so that
\begin{equation}
\#\WC_g^*\lesssim \frac{n}{N}\Big(m^{5/4}n^{1/4}\log m +
m^{3/4}n^{13/12}\log n\Big).
\end{equation}
Using \eqref{controlOfN} and selecting a sufficiently large
constant in \eqref{mnBig} (of course the choice of constant in
\eqref{mnBig} will depend on the desired constant $C_0$ in
\eqref{controlOfWgStar}) we obtain \eqref{controlOfWgStar}. Since
$(\#\WC_g^*)(\#\BC)<mn/2$ we can apply the induction hypothesis to
obtain
\begin{equation}\label{WStarAndB}
{\mathcal R}_{B(b,\alpha)}(\WC_g^*,\BC_g)\leq
\frac{1}{1000 C_0}C_\epsilon(mn)^{\epsilon}\Big((mn)^{3/4}+m\log
n+n\log m\Big).
\end{equation}

Now, for each $i$ let
\begin{equation}%
\BC_g^i=\{\Gamma\in\BC_g\colon
\Delta_{B(b,C^{-2}\alpha)}(\Gamma,\tilde\Gamma)<C\delta\
\textrm{for some}\ \tilde\Gamma\in\WC_g^i\}.
\end{equation}
Item \ref{numberOfCellIncidentCircles} in Lemma
\ref{cellDecompLemma3} implies
\begin{equation}
\#\BC_g^i\lesssim\frac{n\log n}{N}.
\end{equation}
Now, we can apply the induction hypothesis to the pair
$(\WC_g^i,\BC_g^i)$ to conclude
\begin{equation}\label{inductionBoundOneCellPreEqn}
\mathcal R_{B(b,\alpha)}(\WC_b^i,\BC_b^i)\leq C_\epsilon
[(\#\WC_b^i)(\#\BC_b^i)]^\epsilon[(\#\WC_b^i)(\#\BC_b^i)]^{3/4}
C_0^{-1}.
\end{equation}
However, $\BC_g^i$ was selected so that
\begin{equation*}
\mathcal R_{B(b,C^{-2}\alpha)}(\WC_g^i,\BC_g)\leq
\mathcal R_{B(b,\alpha)}(\WC_g^i,\BC_g^i),
\end{equation*}
and thus \eqref{inductionBoundOneCellPreEqn} implies
\begin{equation}\label{inductionBoundOneCell}
\mathcal R_{B(b,C^{-2}\alpha)}(\WC_g^i,\BC_g)\leq
C_\epsilon
[(\#\WC_g^i)(\#\BC_g^i)]^\epsilon[(\#\WC_g^i)(\#\BC_g^i)]^{3/4}
C_0^{-1}.
\end{equation}

Summing \eqref{inductionBoundOneCell} over the $M\lesssim N^3\log
N$ choices of $i$ and applying H\"older's inequality (see pages 1252--3
of \cite{Wolff4}, for the details), we obtain
\begin{equation}\label{controlOfAllCellInteractions}
\begin{split}
\sum_i\mathcal R_{ B(b,C^{-2}\alpha)}&(\WC_g^i,\BC_g^i)\\
& \leq
\frac{1}{1000}C_\epsilon(mn)^{\epsilon}\Big((mn)^{3/4}+m\log n+n\log m\Big).
\end{split}
\end{equation}
Combining \eqref{controlOfAllCellInteractions},
\eqref{controlOfN}, and \eqref{WStarAndB} we obtain
\eqref{controlOfBadInteractionsSmallCircle}.
%
%-
%-
\section{Riemannian metric circles and other generalizations}\label{RiemannianMetricIntersectionSection}%
It is reasonable to ask whether \eqref{ML3Bound} holds for
functions $\Phi$ which satisfy the cinematic curvature conditions
but are not algebraic. An examination of the arguments above
reveals that the only place where the algebraic properties of
$\Phi$ are used is in Lemma \ref{constDescrComplexCellDecomp},
where we make use of the fact that the level sets of
$\Phi(x,\cdot)$ (and of various functions obtained from $\Phi$)
are algebraic curves, and in particular, any two such curves
intersect $O(1)$ times.

One might hope that we could extend \eqref{ML3Bound} to analytic
$\Phi$ by approximating $\Phi$ by the first $\sim|\log\delta|$
terms of its Taylor expansion. Unfortunately, the bounds obtained
above are more than superexponential in the degree of $\Phi$, so
if we approximate $\Phi$ by a polynomial of degree
$\sim|\log\delta|$ then the above proof yields maximal function
bounds that are worse than the Kolasa-Wolff result
\eqref{MweakerLpBound}.

Working through the proof of Lemma
\ref{constDescrComplexCellDecomp}, we see that the proof requires
us to control the number of times certain pairs of curves can
intersect. For $x,\tilde x\in U_1,\omega\in\{\pm1\}$, let
\begin{equation}
\gamma_{x,\tilde x,\omega,r}=\{y\colon\Phi(x,y)+\omega\Phi(\tilde
x,y)=r\}.
\end{equation}
We shall call such curves $\Phi$--conics.
\begin{definition}
We say that $\Phi$ has the \emph{bounded conic intersection
property} if it satisfies the following requirements:
\begin{enumerate}[label=(\roman{*}), ref=(\roman{*})]
\item If $\{x,\tilde x\}\neq\{x^\prime,\tilde x^\prime\},$ then
\begin{equation}\label{boundedEllipseIntersection}
\#(\gamma_{x,\tilde x,\omega,r}\cap \gamma_{x^\prime,\tilde
x^\prime,\omega^\prime,r^\prime})\lesssim 1.
\end{equation}%
\item All $\Phi$--circles $\Gamma$ and $\Phi$--conics $\gamma$
have $O(1)$ $y^{(1)}$--extremal points (defined below).
\end{enumerate}
\end{definition}
\begin{definition}\label{extremalPtDefinition}
A $y^{(1)}$--extremal point of a curve $\zeta$ is a point
$y_0\in\zeta$ such that $\zeta\cap V$ is contained in one of the
closed half-spaces $\{y^{(1)}\geq y_0^{(1)}\}$ or $\{y^{(1)}\leq
y_0^{(1)}\}$ for $V$ a sufficiently small open neighborhood of
$y_0$.
\end{definition}

Requirement \eqref{boundedEllipseIntersection} is the most
difficult to satisfy, and it is the analogue of the Euclidean
statement that distinct irreducible conic sections intersect in at
most $O(1)$ places (actually 4).

If $\Phi$ satisfies the cinematic curvature hypotheses, it need
not have the bounded conic intersection property. Indeed, consider the example
\begin{equation}\label{ellipseIntersectionCounterExample}
\Phi(x,y)=y^{(2)}+x^{(1)}y^{(1)} + x^{(2)}(y^{(1)})^2 +p(x,y).
\end{equation}
If $p(x,y)=0$, the $\Phi$--conics
\begin{align*}
\gamma&=\{y\colon \Phi((1,0),y)+\Phi((-1,0),y)=r\},\\
\tilde\gamma&=\{y\colon \Phi((0,1),y)+\Phi((0,-1),y)=r\}
\end{align*}
are identical (both are simply the line $y^{(2)}=r$. Thus we can
select $p$ to be a highly oscillatory $C^\infty$ perturbation
which causes $\#(\gamma\cap\tilde\gamma)$ to be arbitrarily large,
independent of (say) the $C^3$--norm of $\Phi$ (we could choose
some other reasonable norm on $\Phi$ and construct similar
counter-examples). For example, we could choose
\begin{equation}
p(x,y)=C^{-1}\phi(x)\Big(y^{(2)}-\exp\left[-1/|(y^{(1)})^6|\right]\sin\left(\exp\left[1/|(y^{(1)})^2|\right]\right)\Big)
\end{equation}
for $\phi(x)$ a $C^\infty$ function supported in a small
neighborhood of $(1,0)$. This choice of $\Phi$ satisfies the
cinematic curvature hypothesis, since it satisfies
\eqref{equivalentCinematicCurvatureConditionOne} and
\eqref{equivalentCinematicCurvatureConditionTwo} (provided we
choose $C$ sufficiently large so the contributions from $p$ do
not affect the calculations), but it does not satisfy
\eqref{boundedEllipseIntersection}. Of course, the $\Phi$ given in
\eqref{ellipseIntersectionCounterExample} may still satisfy
\eqref{ML3Bound}, but a different proof would be needed. 

{\bf Added 2/14/2012}: Indeed, the new results from \cite{Zahl} show that the defining function $\Phi$ from \eqref{ellipseIntersectionCounterExample} satisfies the bound \eqref{ML3Bound}, though of course $\Phi$ from \eqref{ellipseIntersectionCounterExample} does not have the bounded conic intersection property.

While general $\Phi$ need not satisfy
\eqref{boundedEllipseIntersection}, we conjecture:
\begin{conjecture}\label{riemannianMetricsAreGood}
Let $\Phi(x,y)=\rho(x,y)$ for $\rho$ a Riemannian metric
sufficiently close to Euclidean. Then $\Phi$ satisfies the bounded
conic intersection property.
\end{conjecture}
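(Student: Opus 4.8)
The plan is to exploit the fact that a Riemannian metric $\rho$ which is sufficiently close to the Euclidean metric (say, whose metric tensor is $C^{N}$-close to the Euclidean one for a suitable absolute constant $N$) has a distance function $\rho(x,\cdot)$ which is, uniformly in $x$, $C^{N}$-close to $y\mapsto|x-y|$ on the relevant domain. On a small ball every two points are joined by a unique minimizing geodesic with no conjugate points, this geodesic and the value $\rho(x,y)$ depend smoothly on $(\rho,x,y)$ with Gr\"onwall-type estimates, and since the foci of our $\Phi$--conics lie in $U_{1}$ while we only care about $y\in B(b,\alpha)$ with $U_{1}\cap B(b,\alpha)=\emptyset$, we have $\rho(x,y)\sim 1$ and no regularity issues arise. (This is exactly where the Riemannian hypothesis, rather than mere closeness of $\Phi$ to $|x-y|$ in some fixed weak norm, is used: it precludes the highly oscillatory behaviour exhibited in \eqref{ellipseIntersectionCounterExample}.) Consequently, restricted to $B(b,\alpha)$, every $\Phi$--circle and every $\Phi$--conic $\gamma_{x,\tilde x,\omega,r}$ is a $C^{N}$-small perturbation, uniformly over the compact ranges of $(x,\tilde x,r)$ that occur, of the genuine Euclidean circle or conic with the same data.

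Requirement (ii) of the bounded conic intersection property is then easy. A genuine Euclidean circle or nondegenerate conic has nowhere-vanishing curvature, hence at most two $y^{(1)}$--extremal points in the sense of Definition \ref{extremalPtDefinition} on each of its $O(1)$ maximal monotone arcs; by $C^{2}$-closeness the curvature of each $\Phi$--circle and $\Phi$--conic over $B(b,\alpha)$ is likewise bounded away from $0$ once $\|\rho-\rho_{0}\|_{C^{N}}$ is small enough, and nowhere-vanishing curvature gives local convexity and hence $O(1)$ extremal points. The only Euclidean conics with vanishing curvature are the degenerate ones --- lines, rays, segments --- and one checks separately that these do not arise among the $\Phi$--conics produced in the proof of Lemma \ref{constDescrComplexCellDecomp}, or bounds them by hand.

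The substance is requirement (i), i.e.\ \eqref{boundedEllipseIntersection}. For the Euclidean metric this is B\'ezout: when $\{x,\tilde x\}\neq\{x',\tilde x'\}$ the conics $\gamma_{x,\tilde x,\omega,r}$ and $\gamma_{x',\tilde x',\omega',r'}$ are distinct irreducible plane curves of degree $\le 2$ (the foci are recovered from a nondegenerate ellipse or hyperbola, and an ellipse, a circle, and a hyperbola are mutually distinct), hence share no component and meet in at most $4$ points counted with multiplicity. Since the space of such pairs, after deleting the ``confocal diagonal'' $\{x,\tilde x\}=\{x',\tilde x'\}$ and the lower-dimensional degenerate loci, is compact, these intersections are \emph{quantitatively} nondegenerate: there is an $\eta>0$, uniform over the relevant parameters, so that near each intersection point of multiplicity $m$ the two defining functions can be written as $v=0$ and $v=g(u)$ with $|g^{(m)}|\ge\eta$ on a $u$--interval of length $\ge\eta$, and elsewhere the two curves are at distance $\ge\eta$. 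A standard perturbation lemma --- a $C^{m}$ function whose $m$-th derivative is bounded away from $0$ on an interval has at most $m$ zeros there, applied to the difference of the two perturbed graphs --- then shows that any curves $C^{N}$-close to these models, with $N\ge 4$ and the closeness small relative to $\eta$, again meet in at most $4$ points. Combined with the first paragraph this gives \eqref{boundedEllipseIntersection}.

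I expect the main obstacle to be precisely this quantitative-nondegeneracy claim: one must verify that the bad locus in parameter space --- distinct-foci Euclidean conics that coincide, share a component, or have contact of order $>4$ --- is contained in the union of the confocal diagonal and the explicit degenerate loci ($r=0$ for a hyperbola, $r=|x-\tilde x|$ for an ellipse), so that compactness supplies a uniform $\eta$ on the complement; and one must then argue that these degenerate loci, where even distinct-foci Euclidean conics can genuinely coincide as point sets (for instance two perpendicular-bisector lines sharing the same axis), do not occur among the $\Phi$--conics arising in Lemma \ref{constDescrComplexCellDecomp}, or else dispatch them via the easier line-versus-conic bounds.
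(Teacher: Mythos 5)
This statement is a \emph{conjecture} in the paper; no proof is supplied there, so there is no argument of the paper's to compare your proposal against. That said, the proposal as written is not a proof, and the gap you flag in your final paragraph is not a loose end but the essential obstruction. The compactness argument yields a uniform $\eta>0$ only on a compact set bounded away from the bad locus, and $\eta\to 0$ as one approaches it, whereas the perturbation size $\|\rho-\rho_0\|_{C^N}$ is a fixed constant of the problem; there is therefore no way to arrange that the perturbation is small relative to $\eta$ uniformly over all parameter pairs. Worse, the bad locus is genuinely present and cannot be avoided: two Euclidean degenerate hyperbolas $\gamma_{x,\tilde x,-1,0}$ and $\gamma_{x',\tilde x',-1,0}$ with $\{x,\tilde x\}\neq\{x',\tilde x'\}$ can coincide exactly (any two segments sharing the same perpendicular bisector), and for small $r$ the $\Phi$--conics $\gamma_{x,\tilde x,-1,r}$ lie within $o(1)$ of such lines. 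You propose to dispatch these ``via the easier line-versus-conic bounds,'' but in the offending pairs \emph{both} curves are near-degenerate, so no line-versus-conic bound applies; and you propose to argue they ``do not occur'' in the construction of Lemma \ref{constDescrComplexCellDecomp}, but you supply no argument for that, and in fact they do occur. This is precisely what the remark after Corollary \ref{riemannianMetricMaximalFn} is about: the weakened conjecture there excises $\Phi$--conics lying within $\delta$ of a geodesic exactly because they are the known failure mode; your outline would need that excision (or some replacement for it) and currently has neither.

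A second, related concern: the only way the Riemannian hypothesis enters your argument is to upgrade $C^3$-closeness of the distance function to $C^N$-closeness for some larger fixed $N$. But that upgrade does not resolve the issue. The counterexample \eqref{ellipseIntersectionCounterExample} shows intersection counts can blow up while low-order norms stay small; the same phenomenon persists for any fixed finite $N$, because near a pair of coinciding model conics a perturbation small in $C^N$ can still produce many transversal crossings. What would genuinely distinguish the Riemannian case is a rigidity statement for Riemannian distance functions --- roughly, that two $\Phi$--conics with distinct focal data cannot be $C^0$-close on a set of positive length unless both are contained in a small neighborhood of a geodesic --- and proving that statement is, for practical purposes, the content of the conjecture itself. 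So the proposal identifies the right target but does not advance past the point the paper already reached.
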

This would imply
\begin{corollary}[conditional on conjecture
\ref{riemannianMetricsAreGood}]\label{riemannianMetricMaximalFn}
Let $\Phi(x,y)$ be as in Conjecture
\eqref{riemannianMetricsAreGood}. Then \eqref{ML3Bound} holds for
$M_\Phi$.
\end{corollary}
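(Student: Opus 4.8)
The plan is to show that the only ingredient in the proof of Theorem \ref{theoremOne} that genuinely requires $\Phi$ to be algebraic, namely Lemma \ref{constDescrComplexCellDecomp}, continues to hold whenever $\Phi$ has the bounded conic intersection property, and then to run the rest of the argument of Sections \ref{sectionCinematicCurvImplications}--\ref{sectionCountingIncidences} verbatim. First I would record that $\Phi(x,y)=\rho(x,y)$ satisfies the cinematic curvature conditions \eqref{cinematicCurvatureGradientCondition} and \eqref{cinematicCurvatureCondition} on $U_1\times U_2$: away from the diagonal a Riemannian metric sufficiently close to Euclidean is of the form $|x-y|+P(x,y)$ with $\norm{P}_{C^3}$ small, which is precisely the perturbative situation treated in the second remark after Theorem \ref{theoremOne}. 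Consequently every statement in Section \ref{sectionCinematicCurvImplications} quoted from Kolasa--Wolff (the straightening-out normal form, the derivative bounds, the unique point of parallel normals, the Apollonius-type bound, and, crucially, Property \ref{intersectionSizeBounds}) holds for $\Phi=\rho$; in particular requirement \ref{schlagThmItemOne} of Proposition \ref{schlagsThm} is satisfied. Likewise Lemma \ref{lemmaDistCompDelta}, Corollary \ref{coneHasDimTwo}, and Corollary \ref{CylinderCorollary} use only cinematic curvature and Sard's theorem (to fix the generic $w$ in \eqref{defnOfV}), so the surfaces $S_{\Gamma,B(b,\alpha)}$ are still smooth $2$--manifolds with the cylindrical-section property, and the containments \eqref{surfaceContainmentOne}--\eqref{surfaceContainmentTwo} linking $\Delta_X$ with $\dist(S_{\Gamma,X},\cdot)$ remain valid.

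The heart of the matter is to re-establish Lemma \ref{constDescrComplexCellDecomp} (and hence Lemmas \ref{randomSamplingLemma} and \ref{cellDecompLemma3}) without algebraicity. Working through the vertical decomposition of Chazelle, Edelsbrunner, Guibas, and Sharir \cite{Chazelle} sketched in Appendix \ref{cellDecompositionSection}, one sees that the $\lesssim N^3\log N$ cell bound and the associated random-sampling estimate (in the style of \cite{Clarkson}) require only that the family $\{S_{\Gamma,B(b,\alpha)}\}$ has bounded combinatorial complexity in the following sense: each surface is a finite union of graphs over $x$--space with $O(1)$ sheets (which follows from cinematic curvature via Corollary \ref{CylinderCorollary}); the curves obtained by projecting the pairwise intersections $S_{\Gamma,B(b,\alpha)}\cap S_{\tilde\Gamma,B(b,\alpha)}$ to $x$--space, together with their own pairwise intersections, meet each other $O(1)$ times; and every plane curve that arises when the algorithm slices these surfaces has $O(1)$ $y^{(1)}$--extremal points, so the Davenport--Schinzel sequences produced by the sweep have bounded order. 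A computation with the defining equations in \eqref{defnOfV} shows that these auxiliary curves are, up to the usual implicit-function-theorem reparametrizations, exactly the $\Phi$--conics $\gamma_{x,\tilde x,\omega,r}$, so that \eqref{boundedEllipseIntersection} together with the $y^{(1)}$--extremal-point bound is precisely what the decomposition consumes. Under Conjecture \ref{riemannianMetricsAreGood} these hold, so Lemma \ref{constDescrComplexCellDecomp} goes through for $\Phi=\rho$ with implicit constants depending only on $\rho$, and hence so do Lemmas \ref{randomSamplingLemma} and \ref{cellDecompLemma3}.

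With both requirements of Proposition \ref{schlagsThm} in hand --- requirement \ref{schlagThmItemOne} from Property \ref{intersectionSizeBounds} and requirement \ref{schlagThmItemTwo} from Lemma \ref{biPartitePairRectControlLemma}, whose proof in Section \ref{sectionCountingIncidences} invokes algebraicity only through Lemma \ref{cellDecompLemma3} --- Proposition \ref{schlagsThm} yields Lemma \ref{quantitativeMaximalFunctionBound} for any $\delta$--separated family of $\rho$--circles, and the standard reductions of Section \ref{DefnAndInitialReductionSection} then give \eqref{ML3Bound} for $M_\rho$. I expect the main obstacle to be the second paragraph: isolating precisely which bounded-complexity hypotheses the vertical decomposition actually requires, and checking that the $\Phi$--conics are the only curve family it ever sees, is delicate --- indeed the excerpt notes that even in the algebraic case a fully detailed proof of Lemma \ref{constDescrComplexCellDecomp} is not in the literature --- so this step would need care to be made rigorous in the smooth, bounded-intersection setting.
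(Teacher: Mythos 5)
Your proposal matches the approach the paper intends: it gives no formal proof of this corollary, only the narrative in the paragraphs preceding Conjecture \ref{riemannianMetricsAreGood} (algebraicity enters only through Lemma \ref{constDescrComplexCellDecomp}, and the bounded conic intersection property is the surrogate hypothesis that makes the vertical decomposition work). You fill this in correctly, verifying that cinematic curvature holds for $\rho$ close to Euclidean and tracing how Lemma \ref{constDescrComplexCellDecomp} propagates through Lemmas \ref{randomSamplingLemma}, \ref{cellDecompLemma3}, and \ref{biPartitePairRectControlLemma} to Proposition \ref{schlagsThm}. The one place you and the paper both leave vague is the assertion that the curves consumed by the Chazelle--Edelsbrunner--Guibas--Sharir algorithm are precisely the $\Phi$--conics (together with the extremal-point count); you flag this explicitly as the delicate step needing a careful computation with \eqref{defnOfV}, which is the honest state of affairs---the paper offers no more detail there than you do.
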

\begin{remark}
Actually, we can still obtain Corollary
\ref{riemannianMetricMaximalFn} if we weaken Conjecture
\ref{riemannianMetricsAreGood} to the following statement: If
$\Phi(x,y)=\rho(x,y)$ for $\rho$ a Riemannian metric, define a
$\delta$--generic $\Phi$--conic to be a curve $\gamma_{x,\tilde
x,\omega,r}$ which is not contained in the $\delta$--neighborhood
of any geodesic (this is a quantitative analogue of an (algebraic)
conic section being irreducible). Then $\gamma_{x,\tilde
x,\omega,r}$ admits a decomposition $\gamma_{x,\tilde
x,\omega,r}=\cup_i \gamma_{x,\tilde x,\omega,r}^i$ into
$\lesssim|\log\delta|^C$ connected components such that
\eqref{boundedEllipseIntersection} is satisfied for any two
components of any two $\Phi$--conics.
\end{remark}
%--
%--
\appendix
\section{The Cell Decomposition}\label{cellDecompositionSection}
We shall give a brief sketch of the techniques developed by
Chazelle et al.~in \cite{Chazelle} (see also \cite{Clarkson} and
\cite{sharir} for a rigorous exposition closer to the one sketched
here) on the method of vertical cell decompositions and random
sampling.

Let $\mathcal S=\{S_1,\ldots,S_N\}$ be a collection of
2--dimensional semi-algebraic sets in $\RR^3$ (for which we shall
use the coordinates $(x,r)\in\RR^2\times\RR$).

By subdividing each $S_i$ into a bounded number of pieces if
necessary, we may assume that each set $S_i$ may be written in one
of the following three forms:
\begin{itemize}
\item $S=\graph(f)$, for $f\colon V\to\RR$ a smooth algebraic
function and $V\subset\RR^2$ a (Euclidean) open semi-algebraic set. We shall
call these sets ``surface patches''. %
\item $S_i$ a semi-algebraic set with $\dim(S_i)=2$ but
$\dim(\pi_x(S_i))=1$. We shall call these sets ``vertical
manifolds.''%
\item $S_i$ a semi-algebraic set with $\dim(S_i)<2$.
\end{itemize}
To keep our exposition brief, we shall ignore the latter two types
of sets, since their presence is merely a technical annoyance that
does not contribute significantly to the analysis of the
decomposition. Thus we shall assume that the sets in $\mathcal S$
consist entirely of surface patches.
\begin{definition}
For $S$ a surface patch, we shall define $\bdry(S)=\overline
S\backslash S,$ where $\overline S$ denotes the closure of $S$ in
the Euclidean (rather than Zariski) topology. Note that
$\dim(\bdry(S))=1$.
\end{definition}

\begin{definition} A \emph{vertical line segment} $L\subset\RR^3$ is a
connected 1--dimensional semi-algebraic set with the property that
$\pi_x(L)$ is a point. If $(x_0,r_0)\in\RR^3$, we say that the
(connected) vertical line segment $L$ containing $(x_0,r_0)$ is
maximal with respect to $\mathcal S$ if $L$ meets no point of any
surface in $\mathcal S$ except possibly at $(x_0,r_0)$, but any
strictly larger line segment does.
\end{definition}

If $\gamma\subset\RR^3$ is a 1--dimensional semi-algebraic set
(i.e.~a union of segments of algebraic curves) which is not a
union of vertical lines and isolated points, then if we erect a maximal line segment
from every point of $\gamma$ we obtain a 2--dimensional
semi-algebraic set $V_\gamma$ with
$\pi_x(V_\gamma)=\pi_x(\gamma)$. We shall call this set the
``maximal vertical wall above $\gamma$'' (relative to $\mathcal
S$).

To construct the cell decomposition, erect a maximal vertical wall
above $S\cap \tilde S$ for every pair of distinct $S,\tilde
S\in\mathcal S$, and a maximal vertical wall above $\bdry(S)$ for
each $S\in\mathcal S$. If we consider $\RR^3$ with the surfaces
$S\in\mathcal S$ and the above maximal vertical walls removed,
then the remaining connected sets (which we shall call pre-cells)
each have a unique ``top'' and ``bottom'' bounding surface,
i.e.~for each pre-cell $\Omega$ there are unique $S,\tilde
S\in\mathcal S$ such that any maximal line containing
$(x,r)\in\Omega$ terminates at points in $S$ and $\tilde S$. Thus
at this point, each pre-cell is a ``cylindrical algebraic set,''
i.e.~it is of the form
\begin{equation*}
\Omega= \{(x,r)\colon x\in V_{\Omega},
f_{1,\Omega}(x)<r<f_{2,\Omega}\}
\end{equation*}
for $V_{\Omega}\subset\RR^2$ an open, semi-algebraic set and
$f_{1,\Omega}$, $f_{2,\Omega}$ algebraic functions.

Now, $\bdry(V_\Omega)$ is a 1--dimensional semi-algebraic set, and
thus it can be written uniquely as an almost disjoint finite union of
segments of irreducible algebraic curves such that if any two
segments share a boundary point then their defining polynomials
are distinct (and thus neither defining polynomial divides the
other). We will call the boundaries of these segments the
\emph{vertices} of $V_\Omega$. Now, for each vertex $x_0\in
V_\Omega$, erect the wall
\begin{equation*}
W_{x_0,\Omega}=\{(x,r)\in \Omega\colon x^{(1)}=x_0^{(1)}\}.
\end{equation*}
Finally, if $\gamma$ is a 1--dimensional semi-algebraic set, then
we say that $x_0\in\Gamma$ is a $x^{(1)}$--extremal point if there
exists an open neighborhood $U$ of $x_0$ and an irreducible
algebraic curve $\gamma^\prime$ containing $\gamma\cap U$ such
that $\gamma^\prime\cap U$ is contained in one of the closed half
planes $\{x\colon x^{(1)}\geq x_0^{(1)}\}$ or $\{x\colon
x^{(1)}\leq x_0^{(1)}\}$ (see Figure \ref{extremalPtsFig}).
\begin{figure}[h]\label{extrmalPtsFigure}

\centering
\includegraphics[scale=0.3]{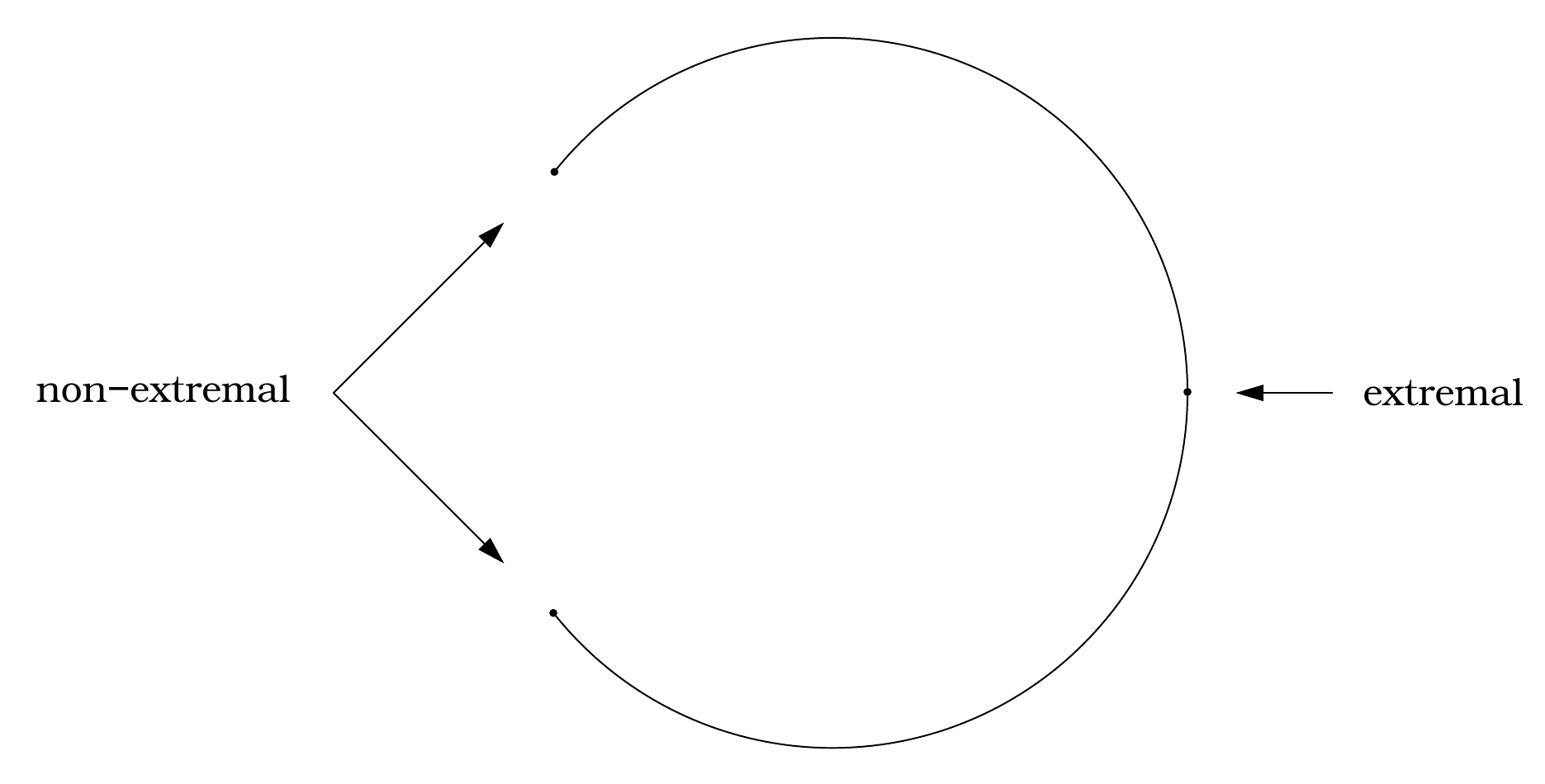}
\caption{Examples of extremal and non-extremal points of a
semi-algebraic curve.}
\label{extremalPtsFig}
\end{figure}
\begin{remark}
This definition of a $x^{(1)}$--extremal point is consistent with
the definition given in Section
\ref{RiemannianMetricIntersectionSection} (Definition
\ref{extremalPtDefinition}) for $\Phi$--conics when $\Phi(x,y)$ is
a smooth algebraic function. The wording of the above definition differs from that of Definition \ref{extremalPtDefinition} since in Definition \ref{extremalPtDefinition} we do not assume that the defining function is algebraic, and thus there is no notion analogous to the Zariski closure of a semi-algebraic set or of an irreducible component of an algebraic set.
\end{remark}

For each extremal point $x_0\in V_\Omega$, erect the vertical wall
$W_{x_0,\Omega}$. Once this has been done, a vertical wall will
have been erected in $\Omega$ above each of the dashed lines in
$V_\Omega$ in Figure \ref{precell_cuttingFig}.
\begin{figure}[h]\label{precellCuttingFigure}
\centering
\includegraphics[scale=0.5]{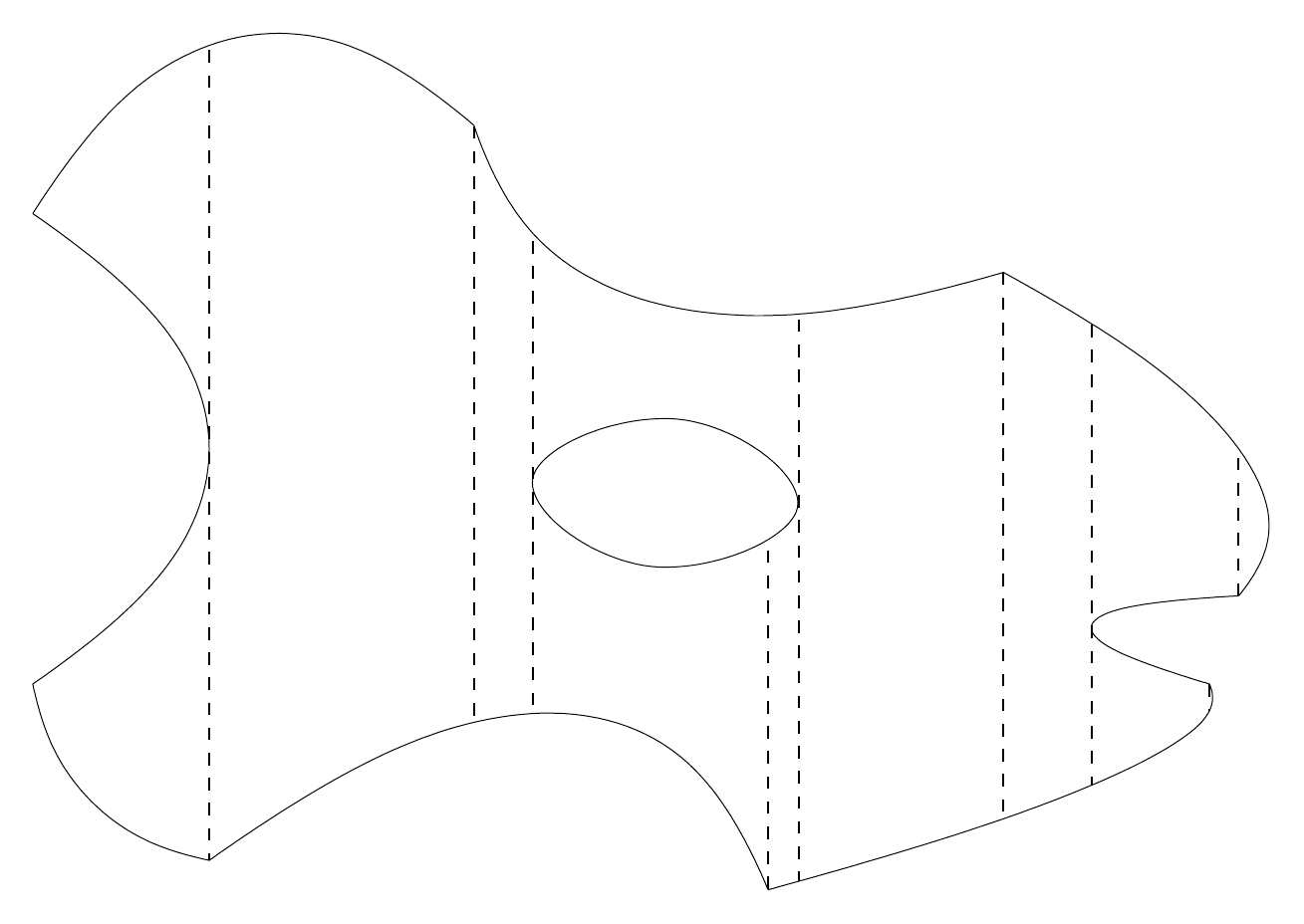}
\caption{A schematic view of $\pi_x(\Omega)$ after vertical walls
have been erected. The dashed lines correspond to vertical walls.}
\label{precell_cuttingFig}
\end{figure}
We also need to add some additional vertical walls
$W_{x_0,\Omega}$ with $x_0$ the endpoint of certain line segments
(since the irreducible algebraic curve that contains a line
segment is of course a line, which (provided it is not parallel to
the $x^{(2)}$--axis) does not have any $x^{(1)}$--extreme points),
but in the interest of brevity we shall gloss over this point (we
can also ensure that line segments never occur by applying a
slight perturbation at an earlier stage of the decomposition).

Once these vertical walls have been erected for each cell
$\Omega$, the resulting arrangement of surfaces partitions $\RR^3$
into topologically trivial open sets (cells). This partition has
the following properties:
\begin{enumerate}[label=(\roman{*}), ref=(\roman{*})]
\item\label{cellDecomItemOne} Each cell is a semi-algebraic set
defined by at most 6 algebraic surfaces.%
\item\label{cellDecomItemTwo} For each cell $\Omega$, there is a
collection of at most 6 surfaces $S_1,\ldots, S_6\in\mathcal S$
such that if the above cell decomposition algorithm were applied
to $\mathcal S^\prime=\{S_1,\ldots,S_6\}$, then $\Omega$ would be
one of the resulting cells in the decomposition.
\item\label{cellDecomItemThree} There are $\lesssim N^3\log N$
cells.
\end{enumerate}
Properties \ref{cellDecomItemOne} and \ref{cellDecomItemTwo} are
immediate from the above cell decomposition algorithm: each cell
$\Omega$ is contained in a unique pre-cell $\Omega^\prime$. The
top and bottom of $\Omega$ are the same algebraic surfaces
$S,\tilde S$ as the top and bottom of $\Omega^\prime$. The
``front'' and ``back'' walls of $\Omega$ (if they exist) are
segments of the vertical wall raised above curves
$\gamma,\tilde\gamma$ which were obtained by intersecting
respectively $S$ and $\tilde S$ with two other surfaces
$S^\prime,\tilde S^\prime\in\mathcal S$, and the ``right'' and
``left'' walls of $\Omega$ (if they exist) are walls of the form
$W_{x_0,\Omega^\prime}$ where $x_0$ is a point of intersection of
$\gamma_1$ and $\gamma_2$, where $\gamma_1$ is a section of $S\cap
S^\prime$ or $\tilde S\cap\tilde S^\prime$, and $\gamma_2$ is a
section of $S\cap S_1$ or $\tilde S\cap \tilde S_1$ for some $S_1$
or $\tilde S_1\in\mathcal S$.

The analysis required to obtain \ref{cellDecomItemThree} is
somewhat lengthy, but the key idea is as follows. The main step in
obtaining Property \ref{cellDecomItemThree} is to bound the number
of vertices in the sets $V_\Omega$, since a bound on the number of
vertices leads to a bound on the number of vertical walls
$W_{x_0,\Omega}$ added to the arrangement (the contribution from
the vertical walls from $x^{(1)}$--extremal points is negligible).
These vertices arise when the algebraic curves defining $\partial
V_\Omega$ intersect. By B\'ezout's theorem, any two algebraic
curves intersect in at most $O(1)$ places (since $\Phi$ is of
bounded degree, all of the algebraic curves appearing in the cell
decomposition are also of bounded degree). This fact allows us to
use the theory of Davenport-Schinzel sequences to control the
\emph{total} number of intersections between the algebraic curve
segments that define the boundaries of the cells (and thus the
total number of vertices occurring in the sets $V_{\Omega}$ as
$\Omega$ ranges over the cells in the decomposition).

Property \ref{cellDecomItemTwo} of the cell decomposition allows
us to use a random sampling argument of the type discussed in
\cite{Clarkson} to obtain Lemma \ref{constDescrComplexCellDecomp}.
We shall give a brief sketch of this lemma here. Let $\mathcal S$
be a collection of 2--dimensional semi-algebraic surfaces with
$\#\mathcal S = n$. Randomly select a subset $\mathcal D\subset S$
with $\#\mathcal D=N < C^{-1}n$ (the requirement $N<C^{-1}n$
allows us to gloss over the distinction between selecting curves
from $\mathcal S$ with and without replacement, since the
probability of the same curve being selected twice is low). Apply
the above cell decomposition algorithm to the collection $\mathcal
D$. For each resulting cell $\Omega$ in the decomposition, let
\begin{equation*}
Z(\Omega)=\#\{S\in\mathcal S\colon S\cap\Omega\neq\emptyset\}.
\end{equation*}

Then,
\begin{equation} \label{probOfMissingAllCircles}
\mathbb{P}\big(Z(\Omega)\geq\lambda\ |\ \Omega\cap S=\emptyset\
\textrm{for all}\ S\in\mathcal D\big)\leq
\Big(1-\frac{\lambda}{n}\Big)^N.
\end{equation}
If we set $\lambda=C\frac{n\log n}{N}$, then the right hand side
of \eqref{probOfMissingAllCircles} is $\lesssim n^{-C}.$ Thus
since our vertical algebraic decomposition gives us an injection
from $\mathcal D^6$ into the collection of all cells arising from
the decomposition of the collection of surfaces $\mathcal D$, and
since each cell in the resulting decomposition does not intersect
any of the surfaces in $\mathcal D$ (since the cells are subsets
of $\RR^3\backslash\bigcup_{S\in \mathcal D} S$), the probability
that even a single cell meets more than $\lambda=C\frac{n\log
n}{N}$ surfaces is at most $C^\prime n^{6-C}$, which we can make
arbitrarily small by choosing $C$ sufficiently large.
\section{Real Algebraic Geometry}\label{realAlgGeoAppendix}%
In this appendix we shall briefly review a few definitions and
theorems from real algebraic geometry. Throughout our discussion,
the base field shall be $\RR$ and all polynomials shall be assumed
to have real coefficients. Unless otherwise noted, all open sets
shall be assumed to be open in the Euclidean topology. Many of the
results discussed below are applicable to any real field but we
shall not pursue this here. Further details on the material
reviewed below can be found in \cite{Bochnak}, \cite{Basu}, and
\cite{Lojasiewicz} (see \cite{Shiota} for an English summary of
the key results we need from \cite{Lojasiewicz}).

\begin{definition}
A set $S\subset\RR^n$ is \emph{semi-algebraic} if
\begin{equation}\label{semiAlgebraicSetDefnEqn}
S=\bigcup_{i=1}^n \{x\colon f_{i,1}(x)=0,\ldots
f_{i,\ell_i}(x)=0,g_{i,1}(x)>0,\ldots,g_{i,m_i}(x)>0\},
\end{equation}
where $\{f_{i,j}\}$ and $\{g_{i,j}\}$ are collections of polynomials.
\end{definition}
\begin{definition}
The \emph{complexity} of a semi-algebraic set is defined as
\begin{equation}
\min \bigg(\sum_{i,j} \deg{f_{i,j}}+\sum_{i,j}
\deg{g_{i,j}}\bigg),
\end{equation}
where the minimum is taken over all representations of $S$ of the
form \eqref{semiAlgebraicSetDefnEqn}.
\end{definition}
\begin{remark}
This definition of complexity is not standard. In the body of the
paper we refer to sets of ``bounded complexity.'' This means that
the complexity of the semi-algebraic set is bounded by a number
that depends only on the defining function $\Phi$ from
\eqref{ML3Bound}.
\end{remark}
\begin{definition}
A function $f\colon \RR^n\to\RR^m$ is semi-algebraic if its graph
is a semi-algebraic set. The complexity of a semi-algebraic
function is the complexity of its graph.
\end{definition}
\begin{theorem}[Tarski-Seidenberg]
Let $S\subset\RR^n$ be semi-algebraic. Then
\begin{equation*}
\pi_{(x_1,\ldots,x_{n-1})}(S)\subset\RR^{n-1}
\end{equation*}
 is
semi-algebraic, and the complexity of
$\pi_{(x_1,\ldots,x_{n-1})}(S)$ is controlled by the complexity of
$S$.
\end{theorem}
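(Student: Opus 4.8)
The plan is to prove the one--variable elimination directly: the statement as given drops a single coordinate $x_n$, and by permuting coordinates and composing such steps one gets projections onto any coordinate subspace with the same kind of complexity control. Since $\pi_{(x_1,\ldots,x_{n-1})}$ commutes with finite unions, and every semi-algebraic set is a finite union of basic sets $\{f_1=\cdots=f_\ell=0,\ g_1>0,\ldots,g_m>0\}$, it suffices to treat a single ``sign condition'' $\bigwedge_i \operatorname{sgn}(p_i(x,y))=\varepsilon_i$ with $\varepsilon_i\in\{-1,0,1\}$. Writing $y=x_n$ and regarding $p_1,\ldots,p_k$ as polynomials in $y$ with coefficients in $\RR[x_1,\ldots,x_{n-1}]$, the set $\pi_{(x_1,\ldots,x_{n-1})}(S)$ is exactly the set of parameter values $x'=(x_1,\ldots,x_{n-1})$ for which the univariate system $p_1(x',\cdot),\ldots,p_k(x',\cdot)$ has a real solution $y$ realizing the prescribed signs. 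This is precisely one step of quantifier elimination for real closed fields.

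The core of the argument is the classical observation that, for fixed $x'$, the set of admissible $y$ is a finite union of points and open intervals whose endpoints are real roots of the $p_i$, and that the combinatorial type of the picture---the number of real roots of each $p_i$, their interleaving order, and the sign vector of $(p_1,\ldots,p_k)$ on each resulting interval and at each root---is locally constant in $x'$ away from a proper semi-algebraic subset. That bad subset is cut out by the vanishing of a controlled finite family $\mathcal F\subset\RR[x_1,\ldots,x_{n-1}]$: the leading coefficients of the $p_i$ and of all the successive signed (pseudo-)remainders in the Sturm--subresultant sequences of the pairs $(p_i,p_i')$ and $(p_i,p_j)$, together with the resultants $\operatorname{Res}_y(p_i,p_j)$ and discriminants $\operatorname{Disc}_y(p_i)$. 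Partition $\RR^{n-1}$ into the (semi-algebraic) cells on which every element of $\mathcal F$ has a fixed sign. On each such cell the degrees in $y$ are constant, so Sturm's theorem and subresultant theory compute the number of real roots of each $p_i$ and all the relevant sign vectors via sign conditions that are themselves already constant on the cell; hence the combinatorial type is constant, and the truth value of ``$\exists y$ with the prescribed signs'' is a constant on the cell that can be read off by a finite check. Therefore $\pi_{(x_1,\ldots,x_{n-1})}(S)$ is a union of some of these cells, hence semi-algebraic.

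For the complexity clause one tracks degrees: the entries of the Sylvester and subresultant matrices are fixed polynomials in the coefficients of the $p_i$, hence polynomials in $x_1,\ldots,x_{n-1}$ whose degrees are bounded in terms of the $\deg_y p_i$ and the degrees of those coefficients, while the number of polynomials in $\mathcal F$ and the number of cells are bounded in terms of $k$ and $\max_i\deg p_i$ alone. Consequently the defining data of $\pi_{(x_1,\ldots,x_{n-1})}(S)$ has complexity bounded in terms of the complexity of $S$; iterating the elimination keeps this bound (depending only on $n$ and the complexity of $S$).

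The step I expect to be the main obstacle is making the signed--remainder construction genuinely \emph{uniform in the parameters}: the leading coefficient of a $p_i$, or of an intermediate remainder, may vanish on a subvariety, so the degree in $y$---and with it the very shape of the Sturm sequence---is not globally constant. The fix is a recursive case split: on the locus where a given leading coefficient vanishes one replaces the polynomial by its lower--degree truncation and restarts, and one must verify that this recursion terminates after a bounded number of cases and without degree blow--up. Setting up this parametrized version of Sturm's algorithm cleanly is the technically delicate part; the remainder of the proof is just assembling finitely many sign conditions. One could alternatively invoke Tarski's original quantifier--elimination procedure or the Cohen--H\"ormander real quantifier elimination, but the subresultant route is the one that most transparently yields the quantitative ``complexity is controlled'' statement needed here.
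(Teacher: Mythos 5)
The paper does not supply a proof of the Tarski--Seidenberg theorem: it is stated in Appendix~\ref{realAlgGeoAppendix} as standard background, with the reader referred to \cite{Bochnak}, \cite{Basu}, and \cite{Lojasiewicz}. So there is no ``paper proof'' to compare against; what follows is an assessment of your proposal on its own terms.

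Your sketch is a correct outline of the classical Cohen--H\"ormander/subresultant route to real quantifier elimination, and you have correctly identified the genuinely delicate step: making the signed-remainder (or subresultant) construction uniform in the parameters when leading coefficients degenerate, via a terminating recursion on the degree. Two further points deserve the same scrutiny you gave that one. First, fixing the signs of the polynomials in $\mathcal F$ partitions $\RR^{n-1}$ into semi-algebraic \emph{sign cells}, but these need not be connected, and ``locally constant combinatorial type away from a bad locus'' does not by itself give ``constant on each sign cell.'' The standard repair is to avoid appealing to geometric interleaving altogether and instead use the sign-determination machinery (Tarski queries / Ben-Or--Kozen--Reif): the full list of realizable sign vectors of $(p_1,\ldots,p_k)$ on $\RR$, for a fixed parameter $x'$, is computed as a Boolean function of the signs of finitely many principal subresultant coefficients of products of the $p_i$, so the truth value of the existential is literally a Boolean combination of sign conditions on $\mathcal F$ rather than something read off from a picture. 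Second, on the complexity clause, ``bounded in terms of $k$ and $\max_i \deg p_i$'' is right for one elimination step, but the bound does grow (the number of sign cells and the degrees of the auxiliary polynomials increase), so iterating the elimination gives a bound depending on $n$ as well; this is consistent with what the statement claims (``controlled by the complexity of $S$''), but worth stating so the reader does not infer a dimension-independent bound. With those clarifications the argument is sound, and it matches what the cited references do in more detail.
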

\begin{definition}
Let $S\subset\RR^n$ be a semi-algebraic set. We define
\begin{equation}\mathcal I(S) = \{f\in \RR[X_1,\ldots,X_n]\colon
f|_S\equiv 0\}.
\end{equation}
$\mathcal I(S)$ is an ideal in $\RR[X_1,\ldots,X_n]$.
\end{definition}
\begin{definition}
For an ideal $\mathcal I$ in $\RR[X_1,\ldots,X_n]$, we define
\begin{equation} \mathcal
Z(\mathcal I)=\{(x_1,\ldots,x_n)\in\RR^n\colon f(x_1,\ldots,x_n)=0\
\textrm{for all}\ f\in \mathcal I\},
\end{equation}
so in particular, $S\subset\mathcal Z(\mathcal I(S)).$
\end{definition}
\begin{definition}
let $S$ be a semi-algebraic set. We define
\begin{equation*}
\mathcal{P}(S)=\RR [X_1,\ldots,X_n]/\mathcal{I}(S).
\end{equation*}
Then the dimension of $S$ is given by
\begin{equation*}
\dim(S)=\dim(\mathcal{P}(S)),
\end{equation*}
the maximal length of a chain of prime ideals in the ring
$\mathcal{P}(S)$ (see e.g.~\cite{eisenbud}).
\end{definition}
\begin{proposition}
Let $S$ be a semi-algebraic set. Then $S$ has the same dimension
as its closure in the real Zariski topology, i.e.
\begin{equation*}
\dim(S) = \dim(\mathcal Z(\mathcal I(S))),
\end{equation*}
and the latter set is algebraic.
\end{proposition}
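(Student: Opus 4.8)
The plan is to observe that, with the algebraic definition of dimension adopted just above, the proposition reduces to an elementary identity between ideals. First I would record the standard fact that the closure of $S$ in the real Zariski topology is precisely $\mathcal Z(\mathcal I(S))$: this set is Zariski-closed and contains $S$ (as noted in the definition of $\mathcal Z$), while any Zariski-closed set containing $S$ has the form $\mathcal Z(J)$ for an ideal $J$ with $J\subseteq\mathcal I(S)$ (since $J$ vanishes on a superset of $S$), whence $\mathcal Z(\mathcal I(S))\subseteq\mathcal Z(J)$ by antitonicity of $\mathcal Z$; so $\mathcal Z(\mathcal I(S))$ is the smallest such set. Thus the ``closure in the real Zariski topology'' in the statement is literally $\mathcal Z(\mathcal I(S))$, which is moreover algebraic: since $\RR[X_1,\ldots,X_n]$ is Noetherian (Hilbert basis theorem), $\mathcal I(S)$ is generated by finitely many polynomials $f_1,\ldots,f_k$, and then $\mathcal Z(\mathcal I(S))=\{x\colon f_1(x)=\cdots=f_k(x)=0\}$ is the common zero set of finitely many polynomials.

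It then remains to prove $\dim(S)=\dim(\mathcal Z(\mathcal I(S)))$, and the key step is the identity $\mathcal I(\mathcal Z(\mathcal I(S)))=\mathcal I(S)$. The inclusion $\mathcal I(S)\subseteq\mathcal I(\mathcal Z(\mathcal I(S)))$ is immediate from the definition of $\mathcal Z$, since every $f\in\mathcal I(S)$ vanishes identically on $\mathcal Z(\mathcal I(S))$; the reverse inclusion follows from $S\subseteq\mathcal Z(\mathcal I(S))$, because a polynomial vanishing on $\mathcal Z(\mathcal I(S))$ vanishes a fortiori on $S$. (This is just the general fact that $\mathcal I\circ\mathcal Z\circ\mathcal I=\mathcal I$ for the antitone Galois connection $(\mathcal I,\mathcal Z)$.)

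With this identity the coordinate rings coincide, $\mathcal P(\mathcal Z(\mathcal I(S)))=\RR[X_1,\ldots,X_n]/\mathcal I(\mathcal Z(\mathcal I(S)))=\RR[X_1,\ldots,X_n]/\mathcal I(S)=\mathcal P(S)$, so by the definition of the dimension of a semi-algebraic set as the Krull dimension of its coordinate ring we obtain $\dim(\mathcal Z(\mathcal I(S)))=\dim\mathcal P(\mathcal Z(\mathcal I(S)))=\dim\mathcal P(S)=\dim(S)$. There is no serious obstacle here; the only point requiring care is to argue purely from the algebraic definition of dimension given above and not to invoke any of the equivalent geometric characterizations of the dimension of a semi-algebraic set, whose equivalence is itself a nontrivial theorem that we do not need.
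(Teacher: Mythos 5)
Your proof is correct. The paper states this proposition without proof (it appears in the real algebraic geometry appendix as a standard fact, with the surrounding material cited to Bochnak--Coste--Roy, Basu--Pollack--Roy, and \L{}ojasiewicz), so there is no argument in the paper to compare against. Given the paper's choice to \emph{define} the dimension of a semi-algebraic set $S$ as the Krull dimension of $\mathcal P(S)=\RR[X_1,\ldots,X_n]/\mathcal I(S)$, your reduction is the natural and essentially forced one: the identity $\mathcal I(\mathcal Z(\mathcal I(S)))=\mathcal I(S)$ (which you correctly derive from the antitone Galois connection between $\mathcal I$ and $\mathcal Z$, and which notably does not require any form of the real Nullstellensatz) makes the two coordinate rings literally equal, so the Krull dimensions agree, and Noetherianity of the polynomial ring gives that $\mathcal Z(\mathcal I(S))$ is algebraic. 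Your closing remark is also well taken: the nontrivial content in the literature is that this algebraic definition of dimension agrees with the various geometric ones, but the proposition as stated only asks for the comparison of $S$ with its Zariski closure, for which the formal Galois-connection argument suffices.
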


\begin{proposition}\label{semiAlgebraicStratification}
Let $f(\underline x, x_{n+1})$ be a polynomial in $n+1$ variables.
Then there exists a partition of $\RR^{n}$ into semi-algebraic
sets $A_1,\ldots,A_m$ and for each $i=1,\ldots,m$ a finite number of
semi-algebraic functions $\xi_{i,1},\ldots,\xi_{i,\ell_i}\colon
A_i\to\RR$ such that
\begin{enumerate}[label=(\roman{*}), ref=(\roman{*})]
\item For each $\underline x\in A_i$ such that $f(\underline
x,\cdot)$ is not identically 0,
\begin{equation}
\{\xi_{i,1}(\underline x),\ldots,\xi_{i,\ell_i}(\underline
x)\}=\{x_{n+1}\colon f(\underline x,x_{n+1})=0\}.
\end{equation}
\item\itemizeEqnVSpacing
\begin{equation}
 \operatorname{graph}(\xi_{i,j})\subset\{f=0\}.
\end{equation}
\end{enumerate}
The complexity of the $A_i$ and $\xi_{i,j}$ depend only on the
complexity of $f$.
\end{proposition}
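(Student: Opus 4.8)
The plan is to stratify $\RR^n$ according to the number of distinct real zeros of the univariate polynomial $f(\underline x,\cdot)$, and on each stratum to let the functions $\xi_{i,j}$ enumerate those zeros in increasing order. Every set and function in sight will be described by a first-order formula over $\RR$ with a bounded number of quantified variables and with polynomials of degree at most $\deg f$, so semi-algebraicity together with the asserted complexity bounds will follow from the quantitative form of the Tarski--Seidenberg theorem, i.e.~quantifier elimination with explicit complexity bounds (see \cite{Basu} or \cite{Bochnak}).

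First I would handle the degenerate strata. Writing $d=\deg_{x_{n+1}}f$ and $f(\underline x,x_{n+1})=\sum_{\ell=0}^{d}c_\ell(\underline x)x_{n+1}^\ell$ with $c_\ell\in\RR[X_1,\ldots,X_n]$, the set $A_\infty=\{\underline x: c_0(\underline x)=\cdots=c_d(\underline x)=0\}$ on which $f(\underline x,\cdot)\equiv 0$ is algebraic of controlled degree, and the set $A_0=\{\underline x: f(\underline x,\cdot)\not\equiv 0,\ f(\underline x,t)\neq 0\ \forall t\}$ is semi-algebraic of controlled complexity (one universally quantified variable); on both of these I take $\ell_i=0$, so no functions are needed and conditions (i)--(ii) hold vacuously. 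Then for $1\le k\le d$ I let $A_k$ be the set of $\underline x$ with $f(\underline x,\cdot)\not\equiv 0$ for which $f(\underline x,\cdot)$ has exactly $k$ distinct real roots; this is cut out by a first-order formula asserting the existence of $t_1<\cdots<t_k$ with $f(\underline x,t_j)=0$ for all $j$ and $f(\underline x,t)=0\Rightarrow \bigvee_j t=t_j$, hence is semi-algebraic of complexity controlled by that of $f$. The sets $A_\infty,A_0,A_1,\ldots,A_d$ partition $\RR^n$; I relabel them $A_1,\ldots,A_m$ with $m\le d+2$.

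On $A_k$ ($1\le k\le d$) I set $\ell_i=k$ and define $\xi_{k,j}(\underline x)$ to be the $j$-th smallest real zero of $f(\underline x,\cdot)$. Its graph is the set of $(\underline x,t)$ with $\underline x\in A_k$, $f(\underline x,t)=0$, and $\#\{s<t: f(\underline x,s)=0\}=j-1$; the counting condition is itself first-order, so the graph is semi-algebraic of controlled complexity, $\xi_{k,j}$ is a semi-algebraic function, and $\graph(\xi_{k,j})\subset\{f=0\}$, which is condition (ii). By construction $\{\xi_{k,1}(\underline x),\ldots,\xi_{k,k}(\underline x)\}$ is precisely the real zero set of $f(\underline x,\cdot)$ for every $\underline x\in A_k$, giving condition (i). The one point requiring care is the uniformity of the complexity bounds, and I expect this to be the main technical step: it is exactly what effective quantifier elimination supplies, since the number of quantified variables in each formula above is at most $d+1$, the polynomials involved have degree at most $\deg f$, and the ambient dimension is $n+1$ --- all quantities determined by $f$ itself, so the complexities of the $A_i$ and $\graph(\xi_{i,j})$ are bounded by a function of the complexity of $f$ alone. (An alternative, more constructive route would stratify by the sign conditions on the principal subresultant coefficients of $f$ and $\partial_{x_{n+1}}f$ and invoke the lifting step of cylindrical algebraic decomposition; this makes the semi-algebraic dependence of the roots on $\underline x$ manifest but is longer to write out.)
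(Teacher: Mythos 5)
The paper does not actually supply a proof of this proposition: it appears in Appendix B as one of several background facts from real algebraic geometry, with the reader directed to \cite{Bochnak}, \cite{Basu}, and \cite{Lojasiewicz} for details. So there is nothing in the paper to compare against, and the relevant question is simply whether your argument is sound.

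It is. Your strategy --- stratify $\RR^n$ by the number of distinct real roots of the univariate polynomial $f(\underline x,\cdot)$, enumerate those roots on each stratum by order, and derive both semi-algebraicity and the uniform complexity bounds from effective quantifier elimination --- is one of the standard proofs, and you have handled the degenerate strata ($f(\underline x,\cdot)\equiv 0$ and $f(\underline x,\cdot)$ having no real zeros) correctly, so that the $A_i$ genuinely partition $\RR^n$ and conditions (i) and (ii) hold. The key point you flag as requiring care, the uniformity of the complexity bounds, is indeed exactly what the quantitative Tarski--Seidenberg theorem delivers: every formula you write down has at most $d+1$ quantified variables and involves only $f$ and inequalities between the quantified variables, so the complexity of the eliminated formula depends only on $\deg f$ and $n$. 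The proposition as stated asks only for semi-algebraic (not continuous) $\xi_{i,j}$ and does not require the $A_i$ to be cells, so you do not need to refine further. Your parenthetical remark is also accurate: the presentations in \cite{Bochnak} and \cite{Basu} obtain this statement as the lifting step of cylindrical algebraic decomposition, stratifying by sign conditions on the principal subresultant coefficients of $f$ and $\partial_{x_{n+1}}f$; that route yields additional structure (continuous, indeed Nash, root functions on connected cells) that the present proposition does not demand, at the cost of a longer argument. For the purpose of this paper, which only invokes the proposition to justify Corollary \ref{semiAlgebraicDecompCor} and ultimately Lemma \ref{constDescrComplexCellDecomp}, your shorter model-theoretic proof is perfectly adequate. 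The one small thing worth tightening is that some of the $A_k$ may be empty, so ``partition'' should be read modulo discarding empty pieces; this is cosmetic.
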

\begin{corollary}\label{semiAlgebraicDecompCor}
Let $S\subset\RR^{n+1}$ be an algebraic set. Then we can write
\begin{equation}
S=\bigcup_1^n S_i\cup\bigcup_1^mT_i,
\end{equation}
with $S_i=\operatorname{graph}(f_i|_{A_i})$ for $f_i$ a smooth
algebraic function and $A_i\subset\RR^n$ an open semi-algebraic
set, and $\dim\pi_{(x_1,\ldots,x_n)}(T_i)<\dim S$. The complexity
of the $f_i,A_i,$ and $T_i$ depend only on the complexity of $S$.
\end{corollary}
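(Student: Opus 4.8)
The plan is to derive the corollary from Proposition~\ref{semiAlgebraicStratification}, applied to a single polynomial cutting out $S$, followed by a semi-algebraic $C^\infty$ refinement of the branches it produces; throughout, one keeps the number of pieces and their complexity under control by the complexity of $S$. We may and will assume $\dim S=n$, which is the case relevant to the applications in this paper (for the surfaces in $\RR^3$ decomposed in Appendix~\ref{cellDecompositionSection} one has $n=2$).

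First I would reduce to the case $S=\{f=0\}$ for a single polynomial $f$: writing $S=\mathcal Z(\mathcal I(S))$, choosing generators $g_1,\dots,g_k$ of $\mathcal I(S)$, and putting $f=\sum_j g_j^2$, we get $S=\{f=0\}$ with $\deg f$ bounded in terms of the complexity of $S$. Viewing $f=f(\underline x,x_{n+1})$ as a polynomial in the last variable, Proposition~\ref{semiAlgebraicStratification} yields a partition $\RR^n=\bigsqcup_{i=1}^{m}A_i$ and semi-algebraic functions $\xi_{i,j}\colon A_i\to\RR$, of complexity controlled by $\deg f$, such that $\graph(\xi_{i,j})\subset\{f=0\}=S$ and $\{\xi_{i,j}(\underline x)\colon j\}$ equals the fibre $\{x_{n+1}\colon f(\underline x,x_{n+1})=0\}$ whenever $f(\underline x,\cdot)\not\equiv 0$. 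Let $B=\{\underline x\in\RR^n\colon f(\underline x,\cdot)\equiv 0\}$, the common zero locus of the coefficients of $f$ as a polynomial in $x_{n+1}$; this is algebraic of controlled complexity, and $B\times\RR\subset S$. A short point-chase using the two listed properties gives $S=\big(\bigcup_{i,j}\graph(\xi_{i,j})\big)\cup(B\times\RR)$. Since $B\times\RR\subset S$, we have $\dim B+1\le\dim S$, so $\pi_{(x_1,\dots,x_n)}(B\times\RR)=B$ has dimension $<\dim S$, and $B\times\RR$ goes into the collection of $T_i$.

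It then remains to put each graph $\graph(\xi_{i,j})$ into the required form. For each $(i,j)$ I would invoke the standard semi-algebraic $C^\infty$ cell decomposition (see \cite{Bochnak}) to split $A_i$ into finitely many semi-algebraic pieces of controlled complexity on each of which $\xi_{i,j}$ restricts to a smooth (Nash) algebraic function, and then replace each such piece by its interior in $\RR^n$ together with its lower-dimensional topological boundary. On each resulting $n$-dimensional piece $A$ --- now genuinely open in $\RR^n$ --- set $f_{i,j,A}:=\xi_{i,j}|_A$ and $S_{i,j,A}:=\graph(f_{i,j,A})$; these are the $S_i$. Every other piece is a graph $\graph(\xi_{i,j}|_{A'})$ with $\dim A'\le n-1$, so its $\pi_{(x_1,\dots,x_n)}$-image has dimension $\le n-1<n=\dim S$; these pieces, together with the residual lower-dimensional set $S\setminus\big(\bigcup_i S_i\cup(B\times\RR)\big)$, are the remaining $T_i$. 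Combining the cardinality and complexity bounds from Proposition~\ref{semiAlgebraicStratification} with those from the $C^\infty$ refinement gives dependence only on the complexity of $S$.

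The main obstacle is precisely the smoothing in the last step: Proposition~\ref{semiAlgebraicStratification} only guarantees that the branches $\xi_{i,j}$ are semi-algebraic --- a priori not even continuous --- whereas the corollary demands smooth algebraic functions on open sets. Producing such a refinement with a controlled number of cells is exactly what the semi-algebraic $C^\infty$-stratification theory provides, but one must apply it carefully and verify that the refinements made over the different cells $A_i$ are mutually compatible, so that every point of $S$ ends in exactly one piece of the claimed type --- in particular that nothing is lost along the boundaries of the $A_i$ or over $B$. This bookkeeping, though routine, is where a careful write-up spends most of its effort.
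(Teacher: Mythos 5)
The paper does not actually prove Corollary \ref{semiAlgebraicDecompCor}; the attached remark only identifies the two ingredients --- Proposition \ref{semiAlgebraicStratification} plus the fact (Chapter 2 of \cite{Bochnak}) that the singular locus of a semi-algebraic set is semi-algebraic of strictly lower dimension --- and your write-up carries out exactly that route: cylindrical branching from Proposition \ref{semiAlgebraicStratification}, the auxiliary cylinder $B\times\RR$ over the degenerate base, and a semi-algebraic $C^\infty$ refinement (which is the standard packaging of the singular-locus fact) making the branch functions smooth over open cells, with all lower-dimensional pieces swept into the $T_i$. Your standing assumption $\dim S=n$ is in fact needed for the statement as worded (a graph over an open $A_i\subset\RR^n$ already has dimension $n$, forcing the $S_i$ to be empty when $\dim S<n$, while a lower-dimensional $S$ can project with full dimension), and it is the case $n=2$ actually used in Appendix \ref{cellDecompositionSection}, so this is the right reading of the corollary.
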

\begin{remark}
In addition to Proposition \ref{semiAlgebraicStratification},
Corollary \ref{semiAlgebraicDecompCor} relies on the the fact that
the set of singular points of a semi-algebraic set is itself a
semi-algebraic set of strictly lower dimension (see Chapter 2 of \cite{Bochnak} for a complete discussion of these ideas).
\end{remark}
\begin{proposition}
Let $S=\bigcup_1^n S_i$ with $S_i$ a semi-algebraic set
homeomorphic to $[0,1]^{d_i}$. Then
$\dim(S)=\max\{d_1,\ldots,d_n\}$.
\end{proposition}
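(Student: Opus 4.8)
The plan is to reduce the statement to two facts whose conjunction is exactly what is asserted: first, that semi-algebraic dimension is monotone and additive over finite unions, i.e. $\dim\!\big(\bigcup_i S_i\big)=\max_i \dim(S_i)$; and second, that a semi-algebraic set homeomorphic to $[0,1]^d$ has dimension exactly $d$. Granting both, the proposition is immediate, since then $\dim(S)=\max_i\dim(S_i)=\max_i d_i$.

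For the first fact I would argue directly from the definition $\dim(S)=\dim(\mathcal Z(\mathcal I(S)))$. A polynomial vanishes on $\bigcup_i S_i$ precisely when it vanishes on each $S_i$, so $\mathcal I(\bigcup_i S_i)=\bigcap_i \mathcal I(S_i)$; and since the product of two ideals is contained in their intersection, a one-line argument gives $\mathcal Z(\bigcap_i \mathcal I(S_i))=\bigcup_i \mathcal Z(\mathcal I(S_i))$. Hence the real Zariski closure of $S$ is the union of the Zariski closures of the $S_i$. The claim now follows from the standard commutative-algebra fact that the Krull dimension of a finite union of affine varieties is the maximum of their dimensions (the minimal primes over an intersection of two radical ideals are exactly the minimal members among the minimal primes over each of the two ideals, and Krull dimension is the maximum of the dimensions of the quotients by minimal primes). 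This part is routine bookkeeping.

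The substance is the second fact, and this is where I expect the only real obstacle to lie. Here I would invoke the theorem that for a semi-algebraic set the dimension defined above coincides with the Lebesgue covering dimension; one convenient route is via semi-algebraic triangulation, which realizes each $S_i$ as semi-algebraically homeomorphic to a finite simplicial complex whose simplicial dimension equals both $\dim(S_i)$ and the topological dimension of $S_i$. Since covering dimension is a topological invariant and the covering dimension of $[0,1]^d$ equals $d$ (a classical consequence of Brouwer's invariance of dimension), any semi-algebraic set homeomorphic to $[0,1]^{d_i}$ must have dimension $d_i$. Everything downstream of this identification is formal, so in the write-up I would quote the identification of semi-algebraic dimension with topological dimension from the standard references (e.g.~\cite{Bochnak}) rather than reprove it, and then assemble the two facts as above to conclude $\dim(S)=\max\{d_1,\ldots,d_n\}$.
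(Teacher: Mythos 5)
The paper does not actually prove this proposition: it appears in Appendix \ref{realAlgGeoAppendix} as one of several background facts from real algebraic geometry that are stated without proof and referred to \cite{Bochnak}, \cite{Basu}, \cite{Lojasiewicz}. So there is no argument of the paper's own to compare against.

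That said, your proof is correct and follows the standard route. The reduction to two facts is exactly right, and both steps are sound. For the first, the identities $\mathcal I(\bigcup_i S_i)=\bigcap_i\mathcal I(S_i)$ and $\mathcal Z(\bigcap_i\mathcal I(S_i))=\bigcup_i\mathcal Z(\mathcal I(S_i))$ together with the paper's own stated proposition that $\dim(S)=\dim(\mathcal Z(\mathcal I(S)))$ reduce the claim to the Krull dimension of a finite union of real affine varieties, and the minimal-prime bookkeeping you sketch is the standard way to see that this is the maximum of the individual dimensions. For the second (the only real content, as you note), it is worth being explicit that what is needed is invariance of semi-algebraic dimension under \emph{arbitrary} homeomorphisms, not merely semi-algebraic ones; this is why you must pass through the identification of semi-algebraic dimension with Lebesgue covering dimension (via the semi-algebraic triangulation theorem, the fact that the dimension of a finite simplicial complex equals its top simplex dimension, and Brouwer's theorem to pin down the covering dimension of $[0,1]^d$). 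Since each $S_i$ is compact, the triangulation is by a finite complex and the argument closes. The one small thing I would tighten in the write-up is the phrase ``realizes each $S_i$ as semi-algebraically homeomorphic to a finite simplicial complex whose simplicial dimension equals both $\dim(S_i)$ and the topological dimension of $S_i$'': the equality of simplicial dimension with $\dim(S_i)$ uses invariance of semi-algebraic dimension under semi-algebraic homeomorphism, which is itself a corollary of triangulation (or of Proposition \ref{smoothManifoldDim} plus stratification), so it is worth flagging that dependency rather than treating both equalities as free.
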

\begin{proposition}\label{smoothManifoldDim}
Let $S$ be a semi-algebraic set that is also a smooth manifold.
Then $\dim(S)$ equals the dimension of $S$ as a smooth manifold.
\end{proposition}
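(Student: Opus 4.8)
The plan is to show that the semi-algebraic dimension $d':=\dim(S)$ and the manifold dimension $d$ of $S$ coincide by producing a single nonempty subset of $S$ which is open in $S$ and which is simultaneously a coordinate chart for the semi-algebraic structure (homeomorphic to $\RR^{d'}$) and contained in a smooth chart (hence also an open subset of $\RR^{d}$); invariance of domain then forces $d=d'$. We may assume $S\neq\emptyset$.

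First I would decompose $S$: iterating Corollary \ref{semiAlgebraicDecompCor} (equivalently, taking a cylindrical algebraic decomposition of $\RR^n$ adapted to $S$), write $S=\bigsqcup_{j\in J}C_j$ as a finite disjoint union of semi-algebraic cells, each semi-algebraically homeomorphic to an open box $(0,1)^{e_j}$, and with the property that the Euclidean closure of any cell is a union of cells of dimension at most its own. Combining the characterizations of dimension recalled above (Krull dimension of the coordinate ring, dimension of the Zariski closure, and the largest $k$ for which $S$ contains a subset homeomorphic to $[0,1]^k$, noting that $(0,1)^k$ contains such a subset), one gets $d'=\max_{j}e_j$. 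Put $J_0=\{j:e_j=d'\}$ and $E=\bigcup_{j\notin J_0}C_j$, so $\dim(E)<d'$. Since closure does not increase semi-algebraic dimension, $\overline E$ (Euclidean closure in $\RR^n$) is a union of cells of dimension $<d'$, hence disjoint from every $C_j$ with $j\in J_0$; therefore $\overline E\cap S\subseteq E$, so $E$ is closed in $S$ and $S\setminus E=\bigsqcup_{j\in J_0}C_j$ is open in $S$. For each $j\in J_0$ the closure $\overline{C_j}$ meets $S$ only in $C_j$ together with cells contained in $E$, so $C_j$ is closed in $S\setminus E$; being a finite disjoint union of such relatively closed pieces, each $C_j$ ($j\in J_0$) is also relatively open, hence open in $S$. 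As $S\setminus E\neq\emptyset$ (otherwise $\dim S=\dim E<d'$), we obtain a nonempty open subset $C_j\subseteq S$ homeomorphic to $(0,1)^{d'}\cong\RR^{d'}$.

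To finish, choose $p\in C_j$. Since $S$ is a smooth $d$-manifold, $p$ has a neighborhood $V$ open in $S$ and homeomorphic to $\RR^{d}$; then $C_j\cap V$ is nonempty and open in $S$, hence open in $C_j\cong\RR^{d'}$ and open in $V\cong\RR^{d}$. Thus a nonempty open subset of $\RR^{d'}$ is homeomorphic to an open subset of $\RR^{d}$, and invariance of domain gives $d=d'$.

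I expect the main obstacle to be the structural input in the second paragraph, namely that a top-dimensional cell of the decomposition can be taken open in $S$ (this rests on the cell-adjacency properties of the cylindrical decomposition). If that is awkward to extract directly from Corollary \ref{semiAlgebraicDecompCor}, the clean substitute is the semi-algebraic triangulation theorem (see \cite{Bochnak}): it realizes $S$ as the support $|K|$ of a finite simplicial complex $K$ with $\dim K=\dim(S)$, and the open simplex of any maximal simplex of $K$ is automatically open in $|K|$ and homeomorphic to $\RR^{d'}$, after which the third paragraph applies verbatim. (If $S$ is permitted to have boundary, one replaces $\RR^{d}$ by a closed half-space in the last step; the invariance-of-domain conclusion is unaffected.)
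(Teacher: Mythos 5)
The paper never actually proves Proposition \ref{smoothManifoldDim}: it appears in Appendix \ref{realAlgGeoAppendix} as one of a list of background facts cited to \cite{Bochnak}, \cite{Basu}, \cite{Lojasiewicz}, \cite{Shiota} with no argument supplied. So there is no ``paper proof'' against which to compare yours, only the literature. Your argument is a correct, self-contained derivation along entirely standard lines: decompose $S$ into cells (or triangulate semi-algebraically), observe that a top-dimensional cell is open in $S$ because the union of lower-dimensional cells is relatively closed (the frontier condition of the decomposition makes $\overline{E}$ a union of cells of dimension $<d'$, hence disjoint from each $C_j$, $j\in J_0$), and then intersect with a manifold chart and invoke invariance of domain. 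The only slight looseness is that the paper's cell-dimension proposition is stated for pieces homeomorphic to closed cubes $[0,1]^{d_i}$, whereas your decomposition yields open boxes $(0,1)^{e_j}$; you address the lower bound $\dim S\ge\max_j e_j$ by noting $(0,1)^k\supset[0,1]^k$, but the matching upper bound $\dim S\le\max_j e_j$ is not literally covered by that proposition as stated (it is of course true and follows, e.g., by further subdividing into closed subcubes or directly from the Zariski-closure characterization), so you should either cite it separately or add the one-line subdivision remark. Your fallback via semi-algebraic triangulation is equally valid and arguably cleaner, since it sidesteps the frontier-condition bookkeeping entirely.
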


\bibliographystyle{amsplain}

\end{document}